\documentclass[11pt, a4paper,reqno]{amsart}
\pdfoutput=1
\usepackage{tikz,enumitem,rotating,latexsym,bm,stmaryrd,caption,}
\usetikzlibrary{positioning,intersections,decorations,shadings}
\usetikzlibrary{shapes}
\usetikzlibrary{arrows}
\usetikzlibrary{patterns}
\usepackage{tkz-euclide}
\usetikzlibrary{calc}
\usetikzlibrary{shapes.geometric}
\tikzset{snake it/.style={decorate, decoration=snake}}
\tikzset{zigzag/.style={decorate, decoration=zigzag}}

\definecolor{ao(english)}{rgb}{0.0, 0.5, 0.0}
\definecolor{darkgreen}{rgb}{0.0, 0.5, 0.0}

\pgfdeclareverticalshading{rainbow}{100bp}
{color(0bp)=(orange);color(25bp)=(orange);
color(50bp)=(magenta); color(75bp)=(cyan);
 color(100bp)=(cyan)}


\usetikzlibrary{decorations.pathmorphing}

	\definecolor{eng}{rgb}{0.0, 0.5, 0.0}
\definecolor{apple}{rgb}{0.55, 0.71, 0.0}
\definecolor{cadmium}{rgb}{0.0, 0.42, 0.24}
\definecolor{darkspringgreen}{rgb}{0.09, 0.45, 0.27}
\definecolor{amethyst}{rgb}{0.6, 0.4, 0.8}
\definecolor{ao}{rgb}{0.0, 0.0, 1.0}
\definecolor{atomictangerine}{rgb}{1.0, 0.6, 0.4}
\definecolor{carmine}{rgb}{0.59, 0.0, 0.09}


\definecolor{toggle}{rgb}{1.0, 0.94, 0.96}

\usepackage[normalem]{ulem}


\setcounter{tocdepth}{1}

\DeclareMathOperator{\rad}{rad}
\DeclareMathOperator{\soc}{soc}

\def\down{\vee}
\def\up{\wedge}

\usepackage{standalone}

\usepackage{etex}
 
\tikzset{
  variable line width/.style={
    every variable line width/.append style={#1},
    to path={%
      \pgfextra{%
        \draw[every variable line width/.try,line width=\pgfkeysvalueof{/tikz/thickness}] (\tikztostart) -- (\tikztotarget);
      }%
      (\tikztotarget)
    },
  },
  thickness/.initial=0.6pt,
  every variable line width/.style={line cap=round, line join=round},
}

\usepackage{todonotes}

\usepackage{tikz}
\usetikzlibrary{matrix,  intersections, calc, decorations.pathreplacing} 

\usepackage{tikz-cd}

\newlength{\superthick}
\newlength{\cornerradius}
\setlength{\superthick}{2.4pt}
\setlength{\cornerradius}{5pt}
\tikzstyle{corner}=[rounded corners=\cornerradius]
\tikzstyle{dot}=[circle, inner sep=0pt, minimum size=4.8pt]
\tikzstyle{string}=[line width=\superthick]
\tikzstyle{std}=[string,dash pattern=on 0.9pt off 0.9pt]
\definecolor{realcyan}{rgb}{0,1,1}

 \captionsetup{width=0.8\linewidth}
 \usepackage{amsmath,amsthm,amsfonts,amssymb,mathrsfs,pb-diagram}
\usepackage[
bookmarks=true,colorlinks=true,linktoc=page,citecolor=green!80!black,linkcolor=green!80!black,urlcolor=green!80!black]{hyperref}
\usepackage{caption}
\usepackage{lipsum,wasysym}
\usepackage{mathtools}
\usepackage[a4paper,margin=2.6cm]{geometry}
\usepackage{cleveref}
 \usepackage{amsmath}
\mathchardef\mhyphen="2D
\usepackage{color}
\usepackage{xcolor}
\usepackage{ifthen}
\usepackage{sidecap}   
\definecolor{mediumblue}{rgb}{0.0, 0.0, 0.8}

\synctex=1

\renewcommand{\geq}{\geqslant}
\renewcommand{\leq}{\leqslant}

\tikzset{wei/.style= 
{red,double=red,double
distance=0.5pt}}




\makeatletter
\newcommand{\ostar}{\mathbin{\mathpalette\make@circled\star}}
\newcommand{\make@circled}[2]{%
  \ooalign{$\m@th#1\smallbigcirc{#1}$\cr\hidewidth$\m@th#1#2$\hidewidth\cr}%
}
\newcommand{\smallbigcirc}[1]{%
  \vcenter{\hbox{\scalebox{0.77778}{$\m@th#1\bigcirc$}}}%
}
\makeatother

\tikzset{wei2/.style={red,double=red,double
distance=0.5pt}}

\allowdisplaybreaks
\numberwithin{equation}{section}
\parskip=2pt
\usepackage{scalefnt}

\newtheorem{thm}{Theorem}[section]
\newtheorem{cor}[thm]{Corollary}

\newtheorem{lem}[thm]{Lemma}

\newtheorem{prop}[thm]{Proposition}

\newtheorem*{ack}{Acknowledgements}

\newtheorem*{prop*}{Proposition}
\newtheorem*{thmA}{Theorem A}

\newtheorem*{cor*}{Corollary}

\newtheorem*{conj*}{Conjecture D}

\newtheorem*{conj1*}{Conjecture B}
\newtheorem*{Acknowledgements*}{Acknowledgements}

\theoremstyle{rmk}

\theoremstyle{defn}
\newtheorem{rmk}[thm]{Remark}
\newtheorem{defn}[thm]{Definition}
\newtheorem{eg}[thm]{Example}



\newcommand{\la}{\lambda}



\newcommand{\ZZ}{{\mathbb Z}}
\newcommand{\NN}{{\mathbb N}}

\newcommand{\Amod}{A\text{--}{\rm mod}}
\newcommand{\Bmod}{B\text{--}{\rm mod}}
\newcommand{\Aproj}{A\text{--}{\rm proj}}
\newcommand{\Atilt}{A\text{--}{\rm tilt}}
\newcommand{\Kmod}{K^m_n\text{--}{\rm mod}}
\newcommand{\Kproj}{K^m_n\text{--}{\rm proj}}
\newcommand{\Hmod}{H^m_n\text{--}{\rm mod}}

\DeclareMathOperator{\Hom}{Hom}

\let\<=\langle
\let\>=\rangle

\tikzset{
ultra thin/.style= {line width=0.05pt},
very thin/.style=  {line width=0.2pt},
thin/.style=       {line width=0.1pt},
semithick/.style=  {line width=0.6pt},
thick/.style=      {line width=0.8pt},
very thick/.style= {line width=1.2pt},
ultra thick/.style={line width=1.6pt}
}

\crefname{ques}{Question}{Questions}
\crefname{defn}{Definition}{Definitions}
\crefname{thm}{Theorem}{Theorems}
\crefname{prop}{Proposition}{Propositions}
\crefname{lem}{Lemma}{Lemmas}
\crefname{cor}{Corollary}{Corollaries}
\crefname{conj}{Conjecture}{Conjectures}
\crefname{section}{Section}{Sections}
\crefname{subsection}{Subsection}{Subsections}
\crefname{eg}{Example}{Examples}
\crefname{figure}{Figure}{Figures}
\crefname{rem}{Remark}{Remarks}
\crefname{rmk}{Remark}{Remarks}
\crefname{equation}{equation}{equation}

\Crefname{ques}{Question}{Questions}
\Crefname{defn}{Definition}{Definitions}
\Crefname{thm}{Theorem}{Theorems}
\Crefname{prop}{Proposition}{Propositions}
\Crefname{lem}{Lemma}{Lemmas}
\Crefname{cor}{Corollary}{Corollaries}
\Crefname{conj}{Conjecture}{Conjectures}
\Crefname{section}{Section}{Sections}
\Crefname{subsection}{Subsection}{Subsections}
\Crefname{eg}{Example}{Examples}
\Crefname{figure}{Figure}{Figures}
\Crefname{rem}{Remark}{Remarks}
\Crefname{rmk}{Remark}{Remarks}

\usepackage[hang,flushmargin]{footmisc}

\hyphenation{tab-le-aux}

\parskip=2pt
\begin{document}

 \title[Faithful covers of Khovanov arc algebras]{
Faithful covers of Khovanov arc algebras
  }

 \author{C. Bowman}
       \address{Department of Mathematics, 
University of York, Heslington, York,  UK}
\email{chris.bowman-scargill@york.ac.uk}
  
 \author{M. De Visscher}
	\address{Department of Mathematics, City, University of London,   London, UK}
\email{maud.devisscher.1@city.ac.uk}

 \author{A. Dell'Arciprete}
       \address{Department of Mathematics, 
University of York, Heslington, York,  UK}
\email{alice.dellarciprete@york.ac.uk}

		\author{A.  Hazi}
	 
     \address{School of Mathematics, University of Leeds, Leeds, LS2 9JT}
\email{a.hazi@leeds.ac.uk}

		\author{R. Muth}
	 
     \address{Department of Mathematics and Computer Science,
Duquesne University,
 Pittsburgh, PA 15282}
\email{muthr@duq.edu}

		\author{ C. Stroppel}
 \address{ Mathematical Institute, Endenicher Allee 60, 53115 Bonn}
 \email{stroppel@math.uni-bonn.de}
 
 \maketitle

\begin{abstract}
We show that the extended Khovanov algebra $K^m_n$ is an $(|n-m|-1)$-faithful cover of the Khovanov arc algebra $H^m_n$.
  \end{abstract}

\section{Introduction}

 The  Khovanov arc algebras,  $H^m_n$, were first  introduced  by Khovanov (in the case $m=n$) in his  pioneering   construction of homological knot invariants for  tangles \cite{MR1740682,MR1928174}.   These homological  knot invariants have  subsequently been developed by Rasmussen and put to use in   Piccirillo's proof that the Conway knot is not slice \cite{MR2729272,MR4076631}. 
  The Khovanov arc algebras and their quasi-hereditary covers,  $K^m_n$,  
  have   been studied  from the point of view of  symplectic geometry \cite{MR4422212}
   and representation theory  \cite{MR2600694,MR2918294,MR2781018,MR2955190,MR2881300,BarWang},
  and  they  provide the  
exciting possibility of constructing  algebraic invariants suitable for Crane--Frenkel's  approach to the smooth 4-dimensional Poincar\'e  conjecture \cite{Manolescu}.

For $m,n \in \NN$, the Khovanov arc algebra  $H^m_n $    is a  symmetric algebra  and hence has infinite global dimension. 
These algebras are best understood by way of their     covers, the extended arc algebras   $K^m_n $ for $m,n \in \NN$. 
The extended arc algebras are Koszul, quasi-hereditary algebras, and therefore they possess standard modules, 
have finite global dimension, and  rigid cohomological structure (for example their  
 radical structures can be encoded combinatorially via the grading).  
 We wish to understand the limits of what cohomological information can be passed back-and-forth between the  Khovanov arc algebras and their 
quasi-hereditary  covers by way of the Schur functor $f:  K^m_n{\rm -mod} \to  H^m_n{\rm -mod}$ and its inverse.  
 Rouquier's language of ``faithfulness" of quasi-hereditary covers  allows us to  address this question for important  subcategories of 
$ K^m_n{\rm -mod} $ and $ H^m_n{\rm -mod}$, namely the subcategories of modules possessing   standard/cell filtrations (see \cref{details} for more details).
Throughout this paper we work over an arbitrary field $\Bbbk$ of characteristic $p\geq 0$.

\begin{thmA} 
The extended arc algebras   $K^m_n $ are $(|n-m|-1)$-faithful covers of the 
Khovanov arc algebras $H^m_n $ for $m,n \in \NN$.  In other words,
$$
{\rm Ext}^i_{K^m_n}	(M,N) \cong
{\rm Ext}^i_{H^m_n}	(f(M),f(N))  
$$
for $M,N$ a pair of standard-filtered modules and $0\leq i \leq  |m-n|-1$. 
\end{thmA}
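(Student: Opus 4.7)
The plan is to apply Rouquier's criterion for $i$-faithfulness of a quasi-hereditary cover, which converts the desired ${\rm Ext}$-isomorphism into an ${\rm Ext}$-vanishing condition. Writing $f = \operatorname{Hom}_{K^m_n}(K^m_n e,-)$ for the idempotent $e \in K^m_n$ such that $H^m_n \cong e K^m_n e$, a standard form of the criterion states that the cover is $i$-faithful if and only if
$$
{\rm Ext}^j_{K^m_n}(\Delta(\lambda), L(\mu)) = 0
$$
for all $\lambda$, all $\mu$ with $f L(\mu) = 0$, and all $0 \leq j \leq i$. The first move is to recall this reduction and so reformulate Theorem~A as the vanishing statement for $i = |m-n|-1$.

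The second step is to identify the simples killed by $f$. Using the Brundan--Stroppel diagrammatic indexing of simples of $K^m_n$ by weight diagrams, $fL(\mu)=0$ precisely when $\mu$ fails to be admissible for $H^m_n$. I would quantify the ``distance" of a killed weight from the admissible set in terms of $|m-n|$: the minimum number of elementary moves needed to produce such a $\mu$ from an admissible weight should grow at least like $|m-n|$. This combinatorial statement is the heart of the proof.

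The third step is to exploit Koszulity of $K^m_n$. Under Koszulity, ${\rm Ext}^j_{K^m_n}(L(\nu),L(\mu))$ is concentrated in internal degree $j$, and the graded pieces of ${\rm Ext}^j(\Delta(\lambda), L(\mu))$ are controlled by the graded Jordan--H\"older multiplicities in the standard modules, together with graded BGG-type reciprocity available in this setting. The key technical claim to establish is that, for any $\mu$ with $fL(\mu)=0$ and any $\lambda$, the composition factor $L(\mu)$ can appear in the graded radical filtration of $\Delta(\lambda)$ only in graded degrees $\geq |m-n|$. This, combined with Koszul concentration, forces ${\rm Ext}^j_{K^m_n}(\Delta(\lambda), L(\mu)) = 0$ for $j \leq |m-n|-1$, as required.

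The main obstacle is exactly this combinatorial/graded estimate: bounding by $|m-n|$ the minimum internal degree at which a killed simple can appear in a standard module. The argument should proceed by an explicit analysis of the oriented cup-diagram basis and the cellular structure of $K^m_n$, tracking how each defect between admissible and inadmissible weights contributes at least one unit of Koszul degree, and verifying that any weight killed by $f$ carries at least $|m-n|$ such defects. Once this combinatorial bound is in hand, Rouquier's criterion delivers the ${\rm Ext}$-isomorphism on all pairs of standard-filtered modules in the claimed range.
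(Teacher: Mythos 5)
The Rouquier-style reduction you invoke in your first step is misstated, and the resulting gap propagates through the rest of the argument. You claim that $(K^m_n,f)$ is $i$-faithful iff ${\rm Ext}^j_{K^m_n}(\Delta(\lambda),L(\mu))=0$ for all $\lambda$, all $\mu$ with $fL(\mu)=0$, and $0\leq j\leq i$. But taking $j=0$ and $\lambda=\mu$ gives $\Hom_{K^m_n}(\Delta(\mu),L(\mu))=\Bbbk\neq 0$ for any $\mu$; so if $f$ kills even one simple, the right-hand condition fails at $j=0$ and your criterion would rule out $0$-faithfulness altogether, contradicting \cref{0faithful}. The correct mechanism, used in the paper, is on the $H^m_n$ side: combine $0$-faithfulness with the Grothendieck spectral sequence ${\rm Ext}^j_{K^m_n}(X,R^i g Y)\Rightarrow{\rm Ext}^{i+j}_{H^m_n}(fX,Y)$ and prove $R^j g Y=0$ for $0<j<n-m$ and cell-filtered $Y$ (\cref{derived}). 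That vanishing unwinds to ${\rm Ext}^j_{H^m_n}(fP(\lambda),S(\mu))=0$, not to an ${\rm Ext}$ in $K^m_n$-mod against a killed simple. A correct ${\rm Ext}$-vanishing restatement on the $K^m_n$ side would involve $L(\mu)$ in the \emph{first} argument (socle, not head, information), which is compatible with the key structural fact $\soc\Delta(\lambda)=L(\lambda^\circ)$ with $\lambda^\circ$ always regular (\cref{soclestandard}).

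Your ``key technical claim'' --- that a killed simple $L(\mu)$ can occur in the graded radical filtration of a standard module only in degree $\geq |m-n|$ --- is also false. When $m\geq 1$ and $n>m$, the weight $\varnothing$ is not regular (it contains no staircase), so $fL(\varnothing)=0$; yet $L(\varnothing)=\rad_0\Delta(\varnothing)$ sits in internal degree $0$. More generally, by \cref{uniserial} the layers $L(t^t)$ of $\Delta(\varnothing)$ for $0\leq t<m$ are all non-regular and occur in low degree. So Koszul concentration cannot deliver the vanishing you want by bounding Jordan--H\"older degrees of standards. What the paper actually needs, and proves, is a bound on the \emph{inverse} Kazhdan--Lusztig polynomials $p^{(k)}_{\lambda\mu}$ (governing the minimal projective resolution of $\Delta(\lambda)$, \cref{projresol}) for the single weight $\lambda=(m^n)$, via the combinatorial move Lemma (\cref{littleclaim}) tracking $\ell_h$-values; all other $\lambda$ are handled by induction on rank using the projective functors $G^{t_i},G^{t_i^*}$ and their $H$-analogues (\cref{Gfiso}, \cref{Extadjoint}), not by a uniform degree estimate. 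Moreover, the paper's $0$-faithfulness step is not a degenerate case of your criterion at $i=0$; it requires the separate input that indecomposable tiltings embed into projective-injectives with projective-injective cokernel (\cref{tiltingsequence}), built from the inductive tilting construction of \cref{tilting}. In short, you have the right combinatorial intuition that ``distance from regularity'' should control the homological range, but the reduction step is wrong, the graded estimate is wrong, and the actual argument needs the $0$-faithfulness input and the rank-induction via projective functors which your proposal does not supply.
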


It is worth noting that  
the cohomological connection of Theorem A  becomes stronger as $|n-m|$ increases.
 We know of only one other similar instance of this phenomenon:  over a field of characteristic $p$,
 the classical  Schur algebra is a $(p-3)$-faithful cover of the group algebra of the symmetric group,
\cite[Corollary 3.9.2]{MR2050037} and \cite{donkinhandbook}; 
that is,  the cohomological connection becomes stronger 
as the characteristic of the field increases. 

To the authors' knowledge, all other results concerning  faithfulness of quasi-hereditary covers
 concern only 0 and $1$-faithfulness;  the most famous of which are 
 Rouquier--Shan--Varagnolo--Vasserot's proof that, under mild conditions on the parameters, the cyclotomic Hecke algebras have 0-faithful 
 covers over $\mathbb C$  (the categories $\mathcal{O}$ for  cyclotomic rational double affine Hecke algebras \cite{RSVV}),
  and Webster's extension of this result to arbitrary ground fields by way of the weighted KLR algebras \cite{MR3732238}.

\medskip
\noindent \textbf{Structure of the paper.} Sections 2, 3 and 4 contain all the necessary background for this paper. Section 2 recalls the notions of highest weight category, Schur functors and quasi-hereditary covers. Section 3 contains the combinatorics of weights, cups and caps needed to define the (extended) arc algebras and study their representation theory. Section 4 introduces the extended arc algebra $K^m_n$ and its quasi-hereditary structure.  Here we also introduce projective functors relating the module categories of extended arc algebras of different ranks which play a crucial role in most of our proofs. Section 5 is new and gives an inductive construction of the indecomposable tilting modules for the extended arc algebras (see \cref{tilting}). In Section 6, we introduce the Khovanov arc algebra $H^m_n$ as an idempotent truncation of $K^m_n$. We also study analogues of the projective functors for $H^m_n$. In Section 7, we prove that $K^m_n$ is a $0$-faithful cover of $H^m_n$ when $m\neq n$ (see \cref{0faithful}). Finally, Section 8 contains the proof of Theorem A (see \cref{ifaithful}).

\begin{ack}
We would like to thank Steve Donkin and Harry Geranios for interesting and informative conversations during the writing of this paper. 
The  authors are grateful for 
  financial support from EPSRC grant EP/V00090X/1 (first and third authors) and EPSRC Programme Grant EP/W007509/1 (fourth author).
\end{ack}

\section{Background: Highest weight categories and Schur functors}\label{details}

In this section we recall the abstract framework required for this paper, that  of quasi-hereditary covers. 
All of the material in this section can be found in the excellent references \cite[Appendix]{Donkin} and \cite[Section 4]{ROUQ}.

\subsection{Highest weight categories}
We start by recalling the notion of highest weight categories introduced by Cline, Parshall and Scott in \cite{MR961165}.

Let $\Bbbk$ be a field and $A$ be a finite dimensional $\Bbbk$-algebra. We will be working with the category $\Amod$ of finite dimensional left $A$-modules.  Throughout the paper, we will identify isomorphic modules.
We define  the  {\sf   radical}  of a finite-dimensional $A$-module $M$, denoted
$\rad M$, to be the smallest submodule of $M$ such that the corresponding
quotient is semisimple. 
The radical series is given by setting $\rad^0 M = M$ and $\rad^i M = \rad (\rad^{i-1} M)$ for $i\geq 1$ and the radical layers $\rad_i M$ are the semisimple subquotients $\rad_i M = \rad^i(M) / \rad^{i+1}(M)$ for $i\geq 0$. We define the {\sf   socle}
of a finite-dimensional $A$-module $M$, denoted
$\soc M$, to be the largest semisimple submodule of $M$. 
The socle series is defined by setting $\soc^0 M = \{0\}$ and $\soc^{i+1} M= \pi_i^{-1} (\soc(M/\soc^i M))$ for $i\geq 0$ where $\pi_i : M \rightarrow M/\soc^i M$ is the natural projection. We define the socle layers $\soc_i M$ to be the semisimple subquotients  $\soc_i M = \soc^i(M)/\soc^{i-1}(M)$ for $i\geq 1$.

Let $(\Lambda , \leq)$ be a poset indexing the isomorphism classes of simple $A$-modules. For each $\la \in \Lambda$, we denote the corresponding simple $A$-module by $L(\la)$ and its projective cover by $P(\la)$. So we have $\rad_0 P(\la)=P(\la) /\rad^1 P(\la) = L(\la)$. We write $\Aproj$ for the subcategory of $\Amod$ whose objects are the projective $A$-modules.

For any $\pi \subseteq \Lambda$ and $M\in \Amod$ we say that $M$ belongs to $\pi$ if all its composition factors belong to $\{L(\mu) \, : \, \mu\in \pi\}$. For any $M\in \Amod$ we define $O^\pi(M)$ to be the unique minimal submodule of $M$ such that $M/O^\pi(M)$ belongs to $\pi$.  Now, for $\la \in \Lambda$, set $\pi(\la) =\{ \mu \in \Lambda, \, \mu<\la\}$ and define the {\sf standard module} $\Delta(\la)$ by 
$$\Delta (\la) = P(\la) / O^{\pi(\la)}(\rad P(\la)).$$
For $\la,\mu \in  \Lambda$, 
we write  $[\Delta(\la) : L(\mu)] $ for the composition factor multiplicity of $L(\mu)$ in $\Delta(\la)$. 
By construction we have that  $[\Delta(\la) : L(\mu)] \neq 0$ implies $\mu \leq \la$ and $[\Delta(\la), L(\la)] = 1$. We let 
$$\Delta = \{ \Delta(\la) \mid  \la \in \Lambda\}.$$
For $M\in \Amod$, we say that $M$ has a $\Delta$-filtration if we have a filtration of $A$-modules
$$0 = M_0 \subset M_1 \subset \ldots \subset M_n = M$$
such that $M_i/M_{i-1}$ is isomorphic to a standard module  for all $1\leq i \leq n$. We let 
  $( \Amod )^\Delta$ denote the subcategory of $\Amod$ whose objects admit a $\Delta$-filtration. 
For $M\in (\Amod)^\Delta$ and $\la \in \Lambda$ we write $(M: \Delta(\la))$ for the multiplicity of $\Delta(\la)$ as a section in a $\Delta$-filtration of $M$. Note that this does not depend on the choice of filtration. 
We can now give the main definition for this section.

\begin{defn}
We say that $A$ is  a {\sf quasi-hereditary algebra}  and that the  category $\Amod$ is   a {\sf highest weight category} (with respect to the poset $(\Lambda , \leq)$) if for all $\la \in \Lambda$ we have
\begin{itemize}[leftmargin=*]
\item $P(\la) \in (\Amod)^\Delta$,
\item $(P(\la):\Delta(\la))  = 1$, and 
\item $(P(\la):\Delta(\mu)) \neq 0$ implies $\mu \geq \la$.
\end{itemize}
\end{defn}

We will assume from now on that $\Amod$ is a highest weight category.  The following proposition is well known.

\begin{prop}\label{Deltakernel}
Let $0\rightarrow M_1 \rightarrow M_2 \rightarrow M_3 \rightarrow 0$ be an exact sequence in $\Amod$. If $M_2$ and $M_3$ belong to $(\Amod)^\Delta$ then so does $M_1$.
\end{prop}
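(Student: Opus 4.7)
My plan is to deduce the statement from the standard cohomological characterization of $\Delta$-filtered modules. For this, one introduces the costandard modules $\nabla(\la)$, which are defined dually to $\Delta(\la)$ via injective envelopes: $\nabla(\la)$ is the largest submodule of the injective envelope of $L(\la)$ whose composition factors $L(\mu)$ all satisfy $\mu \leq \la$. The key input from the theory of highest weight categories is the orthogonality relation
\[
\mathrm{Ext}^i_A(\Delta(\la), \nabla(\mu)) = \delta_{i,0}\,\delta_{\la,\mu}\,\Bbbk,
\]
valid for all $i \geq 0$ and all $\la, \mu \in \Lambda$. This is the standard consequence of the $\Delta$-filtered structure of the projective covers $P(\la)$ (and the dual statement for injectives), both of which are built into the definition of a highest weight category.

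From this orthogonality together with long exact sequences of $\mathrm{Ext}$, one obtains by induction on the length of a $\Delta$-filtration:
\begin{itemize}
\item[(i)] If $M \in (\Amod)^\Delta$, then $\mathrm{Ext}^i_A(M, \nabla(\mu)) = 0$ for every $i \geq 1$ and every $\mu \in \Lambda$.
\item[(ii)] Conversely, if $\mathrm{Ext}^1_A(M, \nabla(\mu)) = 0$ for every $\mu \in \Lambda$, then $M \in (\Amod)^\Delta$. This direction is proved by induction on $\dim M$: choosing $\la$ maximal such that $L(\la)$ is a composition factor of $M$, the surjection $M \twoheadrightarrow L(\la)$ lifts through the projective cover $P(\la) \twoheadrightarrow \Delta(\la)$ to a surjection $M \twoheadrightarrow \Delta(\la)$, whose kernel is shown to inherit the Ext-vanishing hypothesis via the long exact sequence and (i).
\end{itemize}

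Granted (i) and (ii), the proposition is immediate. Applying $\mathrm{Hom}_A(-, \nabla(\mu))$ to the short exact sequence $0 \to M_1 \to M_2 \to M_3 \to 0$ yields the long exact sequence containing
\[
\mathrm{Ext}^1_A(M_2, \nabla(\mu)) \longrightarrow \mathrm{Ext}^1_A(M_1, \nabla(\mu)) \longrightarrow \mathrm{Ext}^2_A(M_3, \nabla(\mu)).
\]
The two flanking terms vanish by (i) applied to $M_2$ and $M_3$, so $\mathrm{Ext}^1_A(M_1, \nabla(\mu)) = 0$ for every $\mu \in \Lambda$; then (ii) yields $M_1 \in (\Amod)^\Delta$, as required. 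The main obstacle lies not in the above deduction but in the groundwork: setting up costandard modules and establishing the orthogonality relation. However, all of this is entirely classical in the theory of highest weight categories and is available in the references \cite{Donkin} and \cite{MR961165} already cited in the text.
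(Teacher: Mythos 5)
The paper gives no proof of this proposition, merely labelling it ``well known,'' so there is no in-text argument to compare against. Your overall strategy --- pass to the cohomological characterisation of $(\Amod)^\Delta$ via the costandard modules $\nabla(\mu)$ and the orthogonality $\mathrm{Ext}^i(\Delta(\la),\nabla(\mu))=\delta_{i,0}\delta_{\la\mu}\Bbbk$, then read the conclusion off the long exact sequence --- is indeed the standard one (it appears in the Donkin appendix and in Cline--Parshall--Scott, both of which the paper cites), and the final deduction from (i), (ii) and the long exact sequence is correct.

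There is, however, a slip in your sketch of (ii). You write that if $\la$ is chosen \emph{maximal} among labels of composition factors of $M$, then there is a surjection $M\twoheadrightarrow L(\la)$ which lifts to $M\twoheadrightarrow\Delta(\la)$. This is not true in general: maximality of $\la$ in the poset does not force $L(\la)$ to occur in the head of $M$. For instance in the quasi-hereditary algebra of $2\times2$ upper-triangular matrices with ordering $2<1$, the indecomposable injective $I(1)$ has $\Delta$-filtration $0\subset\Delta(1)\subset I(1)$ with $I(1)/\Delta(1)\cong\Delta(2)$; here $\la=1$ is maximal among its composition factors but $L(1)$ is the socle, not a quotient, of $I(1)$. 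The correct local move for $\la$ maximal is to produce an \emph{embedding} $\Delta(\la)\hookrightarrow M$ (a nonzero $\phi\colon P(\la)\to M$ kills the submodule filtered by $\Delta(\mu)$ with $\mu>\la$, hence factors through $\Delta(\la)$, and one then uses the $\mathrm{Ext}^1$-vanishing to show the induced map is injective and that the cokernel inherits the vanishing). Alternatively one works with $\la$ minimal and quotients. Since you explicitly defer (ii) to the classical references, this is a presentational defect rather than a fatal gap, but as written the induction step in (ii) does not go through.
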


We will assume further that the algebra $A$ is endowed with an anti-automorphism $^*:A\rightarrow A$. This gives a duality functor
$$^{\ostar} : \Amod \rightarrow  \Amod \, :\, M\mapsto M^{\ostar} = \Hom_A(M, \Bbbk) $$where for $a\in A$ and $\psi\in M^{\ostar}$ we have $(a\psi)(m) = \psi(a^*m)$ for all $m\in M$. We will assume further that $L(\la)^{\ostar} \cong L(\la)$ for all $\la \in \Lambda$.
Then we have the following reciprocity.

\begin{thm}[Brauer--Humphreys reciprocity] \label{BH}
Let $A$ be a quasi-hereditary algebra endowed with an anti-automorphism  $^*:A\rightarrow A$. 
For any $\la , \mu \in \Lambda$ we have 
$$(P(\la):\Delta(\mu)) = [\Delta(\mu): L(\la)].$$
\end{thm}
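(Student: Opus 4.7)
The plan is to follow the classical route via costandard modules and Ext-orthogonality. First, use the duality $^\ostar$ to define the \emph{costandard module} $\nabla(\mu) := \Delta(\mu)^\ostar$ for each $\mu\in\Lambda$. Because $L(\la)^\ostar\cong L(\la)$ for every $\la$, composition multiplicities are preserved by the duality, so $[\nabla(\mu):L(\la)]=[\Delta(\mu):L(\la)]$; in particular $\nabla(\mu)$ has simple socle $L(\mu)$ and all other composition factors $L(\nu)$ satisfy $\nu<\mu$.

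Next, I would establish the fundamental Ext-orthogonality
\[
\Hom_A(\Delta(\nu),\nabla(\mu)) \;\cong\; \delta_{\nu,\mu}\,\Bbbk, \qquad \mathrm{Ext}^1_A(\Delta(\nu),\nabla(\mu)) \;=\; 0.
\]
For the $\Hom$ statement, any nonzero map $\Delta(\nu)\to\nabla(\mu)$ must send the head $L(\nu)$ of $\Delta(\nu)$ into the socle $L(\mu)$ of $\nabla(\mu)$, forcing $\nu=\mu$; conversely, the defining property of $\Delta(\nu)$ that composition factors $L(\kappa)$ of $\rad \Delta(\nu)$ satisfy $\kappa<\nu$, together with the dual statement for $\nabla(\mu)/\soc\nabla(\mu)$, shows the image is the simple socle, giving a one-dimensional Hom when $\nu=\mu$. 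For the Ext-vanishing, apply $\Hom_A(-,\nabla(\mu))$ to a short exact sequence $0\to K\to P(\nu)\to \Delta(\nu)\to 0$; since $P(\nu)$ is projective, the obstruction to lifting is $\Hom_A(K,\nabla(\mu))/\mathrm{image}$, and one shows $\Hom_A(K,\nabla(\mu))=0$ by observing that every composition factor of $K=O^{\pi(\nu)}(\rad P(\nu))$ is of the form $L(\kappa)$ with $\kappa\not<\nu$, while every composition factor of $\nabla(\mu)$ is $\leq \mu$, so the only way a map is nonzero is if $\kappa=\mu$ and $\mu\geq\nu$, but then a head-into-socle argument similar to the one above forces the map to land in $L(\mu)=\soc\nabla(\mu)$, which is not a quotient of $K$ by minimality of $O^{\pi(\nu)}(\rad P(\nu))$.

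With these tools in hand, the reciprocity follows by a double count of $\dim\Hom_A(P(\la),\nabla(\mu))$. On one hand, $P(\la)$ admits a $\Delta$-filtration by the highest weight axiom, and applying $\Hom_A(-,\nabla(\mu))$ to this filtration yields a filtration on the Hom space whose $\mathrm{Ext}^1$ obstructions all vanish by the orthogonality above, so
\[
\dim\Hom_A(P(\la),\nabla(\mu)) \;=\; \sum_{\nu\in\Lambda} (P(\la):\Delta(\nu))\,\dim\Hom_A(\Delta(\nu),\nabla(\mu)) \;=\; (P(\la):\Delta(\mu)).
\]
On the other hand, $P(\la)$ is the projective cover of $L(\la)$, so
\[
\dim\Hom_A(P(\la),\nabla(\mu)) \;=\; [\nabla(\mu):L(\la)] \;=\; [\Delta(\mu):L(\la)],
\]
where the last equality uses that $^\ostar$ preserves composition multiplicities because simples are self-dual. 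Comparing the two expressions gives the claim.

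The main obstacle is the Ext-orthogonality step; once one has $\mathrm{Ext}^1_A(\Delta(\nu),\nabla(\mu))=0$, the rest is a bookkeeping argument. The Ext-vanishing itself is the heart of the quasi-hereditary formalism and rests delicately on the defining property of standards as quotients of projectives by the submodule generated by $L(\kappa)$ with $\kappa\not<\la$; the dualities $L(\la)^\ostar\cong L(\la)$ and the existence of $^*$ are precisely what allow costandards to sit on the opposite side and make the argument symmetric.
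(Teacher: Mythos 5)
The paper states \cref{BH} without proof, pointing the reader to \cite[Appendix]{Donkin} and \cite[Section 4]{ROUQ}; so there is no paper proof to compare against, and your argument must stand on its own. Your route via costandard modules $\nabla(\mu)=\Delta(\mu)^\ostar$ and $\Delta$--$\nabla$ Ext-orthogonality is the standard one, and the $\Hom$ computation together with the final dimension count are correct. However, the key Ext-vanishing step, which you rightly identify as the heart of the matter, contains a genuine gap.

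You assert that every composition factor of $K=O^{\pi(\nu)}(\rad P(\nu))$ is of the form $L(\kappa)$ with $\kappa\not<\nu$, and you attempt to deduce from this that $\Hom_A(K,\nabla(\mu))=0$. Both claims fail. The minimality of $O^{\pi(\nu)}(\rad P(\nu))$ controls only the \emph{head} of $K$, not its full composition series: $K$ inherits a $\Delta$-filtration by $\Delta(\kappa)$ with $\kappa>\nu$, and each such $\Delta(\kappa)$ contributes composition factors $L(\tau)$ with $\tau\leq\kappa$, which can easily lie below $\nu$ in the poset. (Concretely, if $K\cong\Delta(\kappa)$ for a single $\kappa>\nu$, then $K$ already contains every $L(\tau)$ with $n_{\kappa\tau}\neq 0$, typically including $\tau<\nu$.) More seriously, $\Hom_A(K,\nabla(\mu))$ is genuinely nonzero in the case $\mu>\nu$, e.g.\ whenever $\Delta(\mu)$ occurs at the top of the $\Delta$-filtration of $K$, which gives a one-dimensional $\Hom(K,\nabla(\mu))$ via $\Delta(\mu)\twoheadrightarrow L(\mu)\hookrightarrow\nabla(\mu)$. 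In that regime the correct statement is not that $\Hom_A(K,\nabla(\mu))$ vanishes, but that the restriction map $\Hom_A(P(\nu),\nabla(\mu))\to\Hom_A(K,\nabla(\mu))$ is surjective.

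The standard way to close this gap is a two-step argument. First, using that $K$ has a $\Delta$-filtration by $\Delta(\kappa)$ with $\kappa>\nu$, one sees that $\hd(K)$ is a sum of $L(\kappa)$ with $\kappa>\nu$; hence if $\mu\not>\nu$ then any nonzero image of $K$ inside $\nabla(\mu)$ would have head supported on such $\kappa$ yet all composition factors $\leq\mu$, forcing $\nu<\kappa\leq\mu$, a contradiction. This gives $\Hom_A(K,\nabla(\mu))=0$, and hence $\mathrm{Ext}^1_A(\Delta(\nu),\nabla(\mu))=0$, for $\mu\not>\nu$. Second, for $\mu>\nu$ one invokes the duality $^\ostar$: since it is contravariant and exact with $\Delta(\la)^\ostar\cong\nabla(\la)$, we have $\mathrm{Ext}^1_A(\Delta(\nu),\nabla(\mu))\cong\mathrm{Ext}^1_A(\Delta(\mu),\nabla(\nu))$, and now $\nu\not>\mu$, so the first case applies. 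With this correction the rest of your argument goes through unchanged.
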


We call $T\in \Amod$ a {\sf tilting module} if $T$ and $T^{\ostar}$ belong to $(\Amod)^\Delta$ and write $\Atilt$ for the subcategory of $\Amod$ whose objects are tilting modules.

\begin{thm}[\cite{MR1128706}]
 For each $\la\in \Lambda$ there is an indecomposable tilting module $T(\la)$ with the following properties
\begin{itemize}
\item $(T(\la):\Delta(\la))  = 1$, and 
\item $(T(\la): \Delta(\mu)) \neq 0$ implies $\mu \leq \la$.
\end{itemize}
The set  $\{T(\la) \, : \, \la \in \Lambda\}$ is a complete set of pairwise non-isomorphic indecomposable tilting modules.
\end{thm}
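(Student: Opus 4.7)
The plan is to prove existence by induction on $\la \in \Lambda$, following Ringel's universal extension construction. For $\la$ minimal, the standard module $\Delta(\la) = L(\la)$ is simple and self-dual by the hypothesis $L(\la)^\ostar \cong L(\la)$, hence both $\Delta$- and (dually) $\nabla$-filtered; we set $T(\la) := \Delta(\la)$.

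For the inductive step, fix $\la$ and assume $T(\mu)$ has been constructed for every $\mu < \la$. Starting from $M_0 := \Delta(\la)$, I would build $M_0 \subset M_1 \subset M_2 \subset \cdots$ by iterated universal extensions, processing weights $\mu < \la$ in a fixed decreasing order. At stage $i$, pick a weight $\mu$ that is maximal among those with $n_\mu := \dim_\Bbbk {\rm Ext}^1_A(\Delta(\mu), M_i) > 0$ and form
\[
0 \to M_i \to M_{i+1} \to \Delta(\mu)^{n_\mu} \to 0
\]
corresponding to a basis of ${\rm Ext}^1_A(\Delta(\mu), M_i)$. The long exact sequence, combined with the standard highest weight fact that ${\rm Ext}^1_A(\Delta(\nu), \Delta(\mu)) \neq 0$ forces $\nu < \mu$ (proved by applying $\Hom_A(-, \Delta(\mu))$ to a projective presentation of $\Delta(\nu)$, whose kernel is $\Delta$-filtered by factors $\Delta(\nu')$ with $\nu' > \nu$), shows both that ${\rm Ext}^1_A(\Delta(\mu), M_{i+1}) = 0$ and that any already-obtained vanishing ${\rm Ext}^1_A(\Delta(\nu), M_i) = 0$ for $\nu > \mu$ is preserved. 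Since $\{\mu : \mu < \la\}$ is finite, the procedure terminates at a module $N$ which is $\Delta$-filtered with $(N : \Delta(\la)) = 1$, $(N : \Delta(\mu)) = 0$ unless $\mu \leq \la$, and ${\rm Ext}^1_A(\Delta(\mu), N) = 0$ for all $\mu$. The latter vanishing characterises $\nabla$-filtered modules, so via the duality $N^\ostar$ is $\Delta$-filtered, hence $N$ is tilting. Applying Krull--Schmidt, exactly one indecomposable summand of $N$ contains $\Delta(\la)$ as a section (all other summands being forced to have strictly smaller maximal weights), and we define $T(\la)$ to be this summand.

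For uniqueness and completeness, given another indecomposable tilting $T$ with the same multiplicity pattern, one rearranges the filtration of $T(\la)$ so that $\Delta(\la)$ sits at the bottom, yielding $0 \to \Delta(\la) \to T(\la) \to Y \to 0$; this is possible because ${\rm Ext}^1_A(\Delta(\la), \Delta(\mu)) = 0$ for $\mu < \la$ (by the highest weight fact above) makes the necessary filtration swaps split. Since $T$ is $\nabla$-filtered and $Y$ is $\Delta$-filtered, ${\rm Ext}^1_A(Y, T) = 0$, so the analogous inclusion $\Delta(\la) \hookrightarrow T$ lifts to a map $T(\la) \to T$ that is injective on $\Delta(\la)$. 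Symmetrically one obtains $T \to T(\la)$; their composite restricts to a nonzero (hence invertible) endomorphism of $\Delta(\la)$, using $\End_A(\Delta(\la)) \cong \Bbbk$, so is non-nilpotent on the indecomposable $T(\la)$, and Fitting's lemma gives $T \cong T(\la)$. Any indecomposable tilting module has a unique maximal weight $\la$ in its $\Delta$-filtration, so must be of this form, completing the classification.

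The main obstacle is the combinatorial bookkeeping in the universal extension step: one must process weights $\mu < \la$ in decreasing order so that subsequent extensions by $\Delta(\mu')^{n_{\mu'}}$ with $\mu' < \mu$ do not reintroduce ${\rm Ext}^1_A(\Delta(\mu), -)$ obstructions already killed, a delicate interaction that relies essentially on the highest weight axioms.
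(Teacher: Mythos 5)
The paper cites this result from Ringel's original article \cite{MR1128706} without supplying a proof, so there is no in-paper argument to compare against. Your reconstruction --- existence via iterated universal extensions processed along a linearization of $\{\mu : \mu < \la\}$, using that $\mathrm{Ext}^1_A(\Delta(\nu),\Delta(\mu))\neq 0$ forces $\nu<\mu$ so that killing $\mathrm{Ext}^1(\Delta(\mu),-)$ never reintroduces obstructions higher in the order, followed by isolating via Krull--Schmidt the unique indecomposable summand carrying $\Delta(\la)$; and uniqueness via the vanishing $\mathrm{Ext}^1_A(Y,T)=0$ for $Y$ $\Delta$-filtered and $T$ tilting, lifting the inclusions $\Delta(\la)\hookrightarrow T(\la)$ and $\Delta(\la)\hookrightarrow T$ in both directions and applying $\End_A(\Delta(\la))\cong\Bbbk$ with Fitting's lemma --- is exactly Ringel's original argument and is correct. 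One cosmetic point: you open the uniqueness paragraph by positing a second indecomposable tilting ``with the same multiplicity pattern,'' data you do not yet have; your final sentence (every indecomposable tilting has a unique maximal $\Delta$-weight, to which the lifting argument applies) is what actually carries the classification, so no gap results.
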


\subsection{Schur functors and covers}

Now let $e=e^2\in A$ be an idempotent in $A$ and set $B = eAe$ to be the corresponding idempotent subalgebra. We have the Schur functor
\begin{align*}f	&: \Amod \rightarrow \Bmod \, : \, M \mapsto eM 
\intertext{and the  inverse Schur functors}
g &: \Bmod \rightarrow \Amod \, : \, N \mapsto \Hom_B(eA, N),
\intertext{and }
 \tilde{g} &: \Bmod \rightarrow \Amod \, : \, N \mapsto Ae\otimes_B N.\end{align*}
The functor $f$ is exact, the functor $g$ is left exact and the functor $\tilde{g}$ is right exact. Moreover, $f$ is left adjoint to $g$ and right adjoint to $\tilde{g}$ i.e.
$$\Hom_B(fM, N) \cong \Hom_A(M,gN) \quad \mbox{and} \quad \Hom_B(N, fM) \cong \Hom_A(\tilde{g}N,M)$$
for all $M\in \Amod$, $N\in \Bmod$. We have the corresponding unit $\varepsilon : fg\rightarrow {\rm Id}$ and counit $\eta: {\rm Id} \rightarrow gf$. The unit $\varepsilon$ is an isomorphism.

\begin{prop}[{\cite[Proposition 4.33]{ROUQ}}] The following statements are equivalent.
\begin{itemize}[leftmargin=*]
\item $A\cong {\rm End}_B (eA)$.
\item The map $\eta(M) : M \rightarrow gf(M)$ is an isomorphism for all $M\in \Aproj$.
\item The functor $f$ restricted to $\Aproj$ is fully faithful.
\end{itemize}
In this case we say that $(A, f)$ is a {\sf cover} of $B$ (or that $A$ is a cover of $B$ when the functor $f$ is clear from the context).
\end{prop}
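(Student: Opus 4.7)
The plan is to prove $(1) \Leftrightarrow (2)$ and $(2) \Leftrightarrow (3)$ separately, each by a short formal argument using the adjunction $(f, g)$.

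First, for $(2) \Leftrightarrow (3)$, I would exploit that the canonical map induced by $f$ factors via the adjunction as
$$
\Hom_A(M, N) \xrightarrow{\ \eta_N \circ -\ } \Hom_A(M, gf(N)) \xrightarrow{\ \sim\ } \Hom_B(f(M), f(N)),
$$
where the second arrow is the adjunction isomorphism. Hence $f|_{\Aproj}$ is fully faithful iff postcomposition with $\eta_N$ is an isomorphism $\Hom_A(M, N) \to \Hom_A(M, gf(N))$ for every $M, N \in \Aproj$. The direction $(2) \Rightarrow (3)$ is immediate, since $\eta_N$ iso makes $\eta_N \circ -$ iso on any Hom group. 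For $(3) \Rightarrow (2)$, I specialise $M = A$ (which lies in $\Aproj$): the functor $\Hom_A(A, -)$ is the identity, so $\eta_N \circ -$ becomes $\eta_N$ itself, forcing it to be an isomorphism for every projective $N$.

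Second, for $(1) \Leftrightarrow (2)$, I would observe that $gf(A) = \Hom_B(eA, eA) = {\rm End}_B(eA)$, and that under the canonical identification $\Hom_A(A, A) \cong A$ (evaluation at $1$) the unit $\eta_A$ coincides with the structure map $A \to {\rm End}_B(eA)$ given by right multiplication. Thus $(2)$ specialised at $M = A$ is precisely $(1)$. Conversely, given $(1)$, naturality and additivity of $\eta$ yield that $\eta_{A^n}$ is an isomorphism for every $n$; since any $M \in \Aproj$ is a direct summand of some $A^n$, naturality applied to the split inclusion and projection (together with the fact that $gf$ is additive) then gives that $\eta_M$ is an isomorphism as well.

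The argument is essentially formal and I expect no serious obstacle: this is a standard Morita-flavoured recognition criterion for the double centraliser property. The only point requiring care is the bimodule bookkeeping, namely that the left $A$-action on $\Hom_B(eA, -)$ induced by the right $A$-action on $eA$ is compatible with right multiplication, so that $\eta_A$ really is the structure map into ${\rm End}_B(eA)$; once this identification is in hand, everything else reduces to a short diagram chase.
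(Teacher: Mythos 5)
Your argument is correct, and it is the standard proof of this recognition criterion. A small caveat: the paper itself offers no proof here, since it simply cites Rouquier's \cite[Proposition 4.33]{ROUQ}, so there is no ``paper's own proof'' to compare against; what you have written is a complete and self-contained argument along the usual lines. Two points worth making explicit if you write this up: (i) the factorisation you use, namely that the map $\Hom_A(M,N) \to \Hom_B(fM,fN)$ induced by $f$ equals the composite of post-composition by $\eta_N$ with the adjunction isomorphism, is precisely the standard compatibility between a left adjoint, its unit, and the Hom-set bijection, and is the key formal input for $(2)\Leftrightarrow(3)$; and (ii) the map $\eta_A \colon A \to \Hom_B(eA,eA)$ sends $a$ to right multiplication by $a$ on $eA$, so strictly speaking one obtains $A \cong \operatorname{End}_B(eA)^{\mathrm{op}}$ under the usual composition convention --- this is a harmless sign/op bookkeeping issue (Rouquier's conventions absorb it), but you are right to flag it as the one place where the bimodule structure must be checked rather than taken on faith.
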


\begin{defn}[{\cite[Definition 4.37]{ROUQ}}] 
Let $i\geq 0$. We say that $(A, f)$ is an $i$-faithful cover of $B$ if
$${\rm Ext}^i_A (M, M') \cong {\rm Ext}^i_B (fM, fM')$$
for all $M, M'\in ( \Amod )^\Delta$.
\end{defn}

\begin{prop}[{\cite[Proposition 4.40]{ROUQ}}] \label{R0faithful}  The following statements are equivalent.
\begin{itemize}[leftmargin=*]
\item $(A, f)$ is a $0$-faithful cover of $B$.
\item For all $M\in (\Amod)^\Delta$, the map $\eta(M): M \rightarrow gf(M)$ is an isomorphism.
\item For all $T\in \Atilt$, the map $\eta(T): T\rightarrow gf(T)$ is an isomorphism.
\end{itemize}  
\end{prop}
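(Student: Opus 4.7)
The plan is to prove $(1) \Leftrightarrow (2)$ directly from the $(f,g)$-adjunction, and $(2) \Leftrightarrow (3)$ via a tilting coresolution argument. The key observation I will use throughout is that, by the adjunction, for any $N, M \in \Amod$ the natural map $\Hom_A(N, M) \to \Hom_B(fN, fM)$ coincides with post-composition by $\eta(M)$, under the canonical identification $\Hom_B(fN, fM) \cong \Hom_A(N, gf(M))$.

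For $(1) \Leftrightarrow (2)$, I first observe that $A$ itself lies in $(\Amod)^\Delta$, since it is a direct sum of the projective covers $P(\la)$, each of which is $\Delta$-filtered by the defining axioms of a highest weight category. Taking $N = A$ in the natural map above identifies both Hom-spaces with $M$ and $gf(M)$ respectively, and identifies the map itself with $\eta(M)$. Thus the $0$-faithfulness condition, applied with $N=A$, forces $\eta(M)$ to be an isomorphism for every $M \in (\Amod)^\Delta$, giving $(1) \Rightarrow (2)$. Conversely, if $\eta(M')$ is an isomorphism for every $M' \in (\Amod)^\Delta$, then post-composition by $\eta(M')$ is already an isomorphism of Hom-sets for any $N$, which in particular yields $(1)$.

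For $(2) \Leftrightarrow (3)$, the implication $(2) \Rightarrow (3)$ is immediate since $\Atilt \subseteq (\Amod)^\Delta$ by the definition of a tilting module. For $(3) \Rightarrow (2)$, fix $M \in (\Amod)^\Delta$. Standard highest weight category theory (via Bongartz-type left approximations by tilting modules, using \cref{Deltakernel} and the Ext-vanishing between standard-filtered and costandard-filtered modules) produces an exact sequence $0 \to M \to T^0 \to T^1$ with $T^0, T^1 \in \Atilt$ and $\mathrm{coker}(M \to T^0) \in (\Amod)^\Delta$. Applying the exact functor $f$ and then the left-exact functor $g$ yields a commutative diagram with exact rows
$$\begin{array}{ccccccc} 0 & \to & M & \to & T^0 & \to & T^1 \\ & & \;\downarrow \eta(M) & & \downarrow \eta(T^0) & & \downarrow \eta(T^1) \\ 0 & \to & gf(M) & \to & gf(T^0) & \to & gf(T^1), \end{array}$$
and by $(3)$ the two right-hand vertical arrows are isomorphisms, forcing $\eta(M)$ to be an isomorphism as the induced map on kernels.

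The main technical input is the construction of the two-term tilting coresolution with $\Delta$-filtered first cokernel; this is standard in highest weight category theory but is the only step of the argument that genuinely uses the structure of $(\Amod)^\Delta$ beyond \cref{Deltakernel}. Once that ingredient is in place, the rest of the proof reduces to routine adjunction formalism and a short diagram chase.
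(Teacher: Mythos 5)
The paper does not actually prove this proposition; it is quoted verbatim from Rouquier \cite{ROUQ} as background, so there is no in-text proof to compare against. That said, your reconstruction is correct and is the standard argument, almost certainly matching Rouquier's own: for $(1)\Leftrightarrow(2)$ you observe that $A\in(\Amod)^\Delta$ and that, under the adjunction isomorphism $\Hom_B(fN,fM)\cong\Hom_A(N,gfM)$, the canonical comparison map is post-composition with $\eta(M)$, so specialising to $N=A$ identifies that comparison with $\eta(M)$ itself; and for $(3)\Rightarrow(2)$ you coresolve $M\in(\Amod)^\Delta$ by a two-term complex of tiltings $0\to M\to T^0\to T^1$ with $\Delta$-filtered cokernel, apply the exact $f$ then the left-exact $g$, and conclude by the induced map on kernels. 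One small gloss: \cref{Deltakernel} (kernels of surjections between $\Delta$-filtered modules are $\Delta$-filtered) is not really the ingredient that makes the tilting approximation work; the operative facts are the existence of $0\to\Delta(\lambda)\to T(\lambda)\to X(\lambda)\to 0$ with $X(\lambda)\in(\Amod)^\Delta$ and the vanishing $\mathrm{Ext}^1_A((\Amod)^\Delta,(\Amod)^\nabla)=0$, spliced inductively along a $\Delta$-filtration of $M$. Your parenthetical hedge (``standard highest weight category theory'') covers this, so the proof stands.
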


\section{Combinatorics}

We now recall the combinatorics of (extended) Khovanov arc algebras (see also \cite{MR2918294} and \cite{compan2}). We let $S_n$ denote the symmetric group on $n$ letters.

\subsection{Weights and partitions}

Fix $m,n\in \mathbb{N}$. We denote by $\Lambda_{m,n}$ the set of labelled horizontal strips of length $n+m$ where each integer point $1 \leq i \leq n+m$ is labelled by either $\up$ or $\down$ in such a way that the total number of $\up$ is equal to $m$ (and so the total number of $\down$ is equal to $n$). We call the elements of $\Lambda_{m,n}$ {\sf weights}. We define the partial order $\leq$ on the set of weights to be generated by the basic operation of swapping a $\down$ and an $\up$ symbol; getting bigger means that the $\down$'s move to the right. 
    
A {\sf partition}    $\lambda $  is defined to be a weakly decreasing  sequence   of non-negative integers $\lambda = (\lambda_1, \lambda_2, \ldots )$.  We define the Young diagram of a partition to be the collection of tiles 
$$[\la]=\{[r,c] \mid 1\leq c \leq \la_r\}$$
depicted in Russian style with rows at $135^\circ$ and columns at $45^\circ$.  We identify a partition with its Young diagram.
We let $\la^t$ denote the transpose partition given by reflection 
of the Russian Young diagram through the vertical axis.  
 Given  $m,n\in \NN$ we let  ${\mathscr P}_{m,n}$ denote the set of all partitions which fit into an $m\times n$ rectangle, that is 
$${\mathscr P}_{m,n}= \{ \la \mid \la_1\leq m, \la_1^t \leq n\}.$$
There is a bijection beween $\Lambda_{m,n}$ and $\mathscr{P}_{m,n}$ given as follows. Read the labels of a weight in $\Lambda_{m,n}$ from left to right. Starting at the left most corner of the $m\times n$ rectangle, take a north-easterly step for each $\down$ and a south-easterly step for each $\up$. We end up at the rightmost corner of the rectangle, having traced out the ``northern perimeter" of the Russian Young diagram.  
An example is given in \cref{Figweightpartition}.

Throughout the paper, we will identify weights with their corresponding partitions. 
In particular we have that the maximal element in $\Lambda_{m,n}$ is given by 
$$\up \up \ldots \up \down \down \ldots \down = \varnothing$$
and the minimal element in $\Lambda_{m,n}$ is given by
$$\down \down \ldots \down \up \up \ldots \up = (m^n).$$
More generally for $\la, \mu \in \Lambda_{m,n}$ we have $\la < \mu$ if and only if the partition $\mu$ is a subset of the partition $\la$, written as $\mu \subset \la$.

\begin{rmk} The set $\Lambda_{m,n}$ is easily seen to be a labelling set for the cosets of the product of symmetric groups $S_m \times S_n$ inside $S_{m+n}$.  \cref{Figweightpartition} shows how to associate a minimal length coset representative to a given partition. Under this bijection, the partial order $\leq$ described above corresponds to the opposite of the Bruhat order.
\end{rmk}

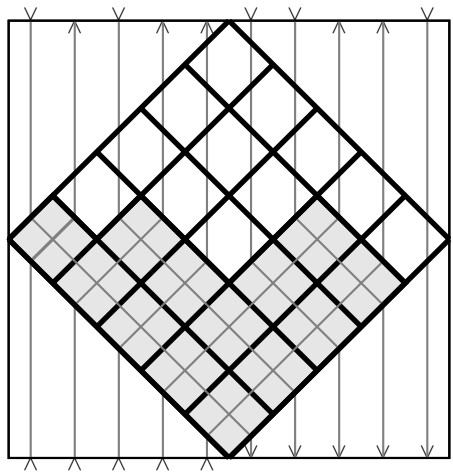
\begin{figure}[ht!]
  $$ \scalefont{0.7}
 \begin{tikzpicture} [scale=0.82]

\path (0,0) coordinate (origin2);

\begin{scope}

     \foreach \i in {0,1,2,3,4,5,6,7,8,9,10,11,12}
{
\path (origin2)--++(45:0.5*\i) coordinate (c\i);
\path (origin2)--++(135:0.5*\i)  coordinate (d\i);
  }

\path(origin2)  ++(135:2.5)   ++(-135:2.5) coordinate(corner1);
\path(origin2)  ++(45:2.5)   ++(135:7.5) coordinate(corner2);
\path(origin2)  ++(45:2.5)   ++(-45:2.5) coordinate(corner4);
\path(origin2)  ++(135:3)   ++(45:7)  ++(45:0.5)   ++(-45:0.5)  coordinate(corner3);

\draw[thick] (origin2)--(corner1)--(corner2)--(corner3)--(corner4)--(origin2);

\clip(corner1)--(corner2)--++(90:0.3)--++(0:7.5)--(corner3)--(corner4)
--++(90:-0.3)--++(180:7.5) --(corner1);

\path[name path=pathd1] (d1)--++(90:7);
 \path[name path=top] (corner2)--(corner3);
 \path [name intersections={of = pathd1 and top}];
   \coordinate (A)  at (intersection-1);
     \path(A)--++(-90:0.1) node { $\up$ };

\path[name path=pathd3] (d3)--++(90:7);
 \path[name path=top] (corner2)--(corner3);
 \path [name intersections={of = pathd3 and top}];
   \coordinate (A)  at (intersection-1);
  \path(A)--++(90:-0.1) node { $\up$ };

\path[name path=pathd5] (d5)--++(90:7);
 \path[name path=top] (corner2)--(corner3);
 \path [name intersections={of = pathd5 and top}];
   \coordinate (A)  at (intersection-1);
  \path(A)--++(90:0.1) node { $\down$ };

\path[name path=pathd7] (d7)--++(90:7);
 \path[name path=top] (corner2)--(corner3);
 \path [name intersections={of = pathd7 and top}];
   \coordinate (A)  at (intersection-1);
     \path(A)--++(90:-0.1) node { $\up$ };

\path[name path=pathd9] (d9)--++(90:7);
 \path[name path=top] (corner2)--(corner3);
 \path [name intersections={of = pathd9 and top}];
   \coordinate (A)  at (intersection-1);
    \path(A)--++(-90:-0.1) node { $\down$ };

\path[name path=pathc1] (c1)--++(90:7);
 \path[name path=top] (corner2)--(corner3);
 \path [name intersections={of = pathc1 and top}];
   \coordinate (A)  at (intersection-1);
  \path(A)--++(-90:-0.1) node { $\down$ };

\path[name path=pathc3] (c5)--++(90:7);
 \path[name path=top] (corner2)--(corner3);
 \path [name intersections={of = pathc3 and top}];
   \coordinate (A)  at (intersection-1);
    \path(A)--++(90:-0.1) node { $\up$ };

\path[name path=pathc5] (c3)--++(90:7);
 \path[name path=top] (corner2)--(corner3);
 \path [name intersections={of = pathc5 and top}];
   \coordinate (A)  at (intersection-1);
    \path(A)--++(-90:-0.1) node { $\down$ };

\path[name path=pathc5] (c9)--++(90:7);
 \path[name path=top] (corner2)--(corner3);
 \path [name intersections={of = pathc5 and top}];
   \coordinate (A)  at (intersection-1);
    \path(A)--++(-90:-0.1) node { $\down$ };

\path[name path=pathc7] (c7)--++(90:7);
 \path[name path=top] (corner2)--(corner3);
 \path [name intersections={of = pathc7 and top}];
   \coordinate (A)  at (intersection-1);
   \path(A)--++(-90:0.1) node { $\up$ };

 \path[name path=pathd1] (d1)--++(-90:7);
 \path[name path=bottom] (corner1)--(corner4);
 \path [name intersections={of = pathd1 and bottom}];
   \coordinate (A)  at (intersection-1);
   \path (A)--++(90:-0.1) node { $\up$  };

 \path[name path=pathd3] (d3)--++(-90:7);
 \path[name path=bottom] (corner1)--(corner4);
 \path [name intersections={of = pathd3 and bottom}];
   \coordinate (A)  at (intersection-1);
   \path (A)--++(90:-0.1) node { $\up$  };

  \path[name path=pathd5] (d5)--++(-90:7);
 \path[name path=bottom] (corner1)--(corner4);
 \path [name intersections={of = pathd5 and bottom}];
   \coordinate (A)  at (intersection-1);
   \path (A)--++(90:-0.1) node { $\up$  };

 \path[name path=pathd7] (d7)--++(-90:7);
 \path[name path=bottom] (corner1)--(corner4);
 \path [name intersections={of = pathd7 and bottom}];
   \coordinate (A)  at (intersection-1);
   \path (A)--++(90:-0.1) node { $\up$  };

 \path[name path=pathd9] (d9)--++(-90:7);
 \path[name path=bottom] (corner1)--(corner4);
 \path [name intersections={of = pathd9 and bottom}];
   \coordinate (A)  at (intersection-1);
   \path (A)--++(90:-0.1) node { $\up$  };

 \path[name path=pathc1] (c1)--++(-90:7);
 \path[name path=bottom] (corner1)--(corner4);
 \path [name intersections={of = pathc1 and bottom}];
   \coordinate (A)  at (intersection-1);
   \path (A)--++(90:0.1) node { $\down$  };

    \path[name path=pathc3] (c3)--++(-90:7);
 \path[name path=bottom] (corner1)--(corner4);
 \path [name intersections={of = pathc3 and bottom}];
   \coordinate (A)  at (intersection-1);
   \path (A)--++(90:0.1) node { $\down$  };

 \path[name path=pathc5] (c5)--++(-90:7);
 \path[name path=bottom] (corner1)--(corner4);
 \path [name intersections={of = pathc5 and bottom}];
   \coordinate (A)  at (intersection-1);
   \path (A)--++(90:0.1) node { $\down$  };

 \path[name path=pathc7] (c7)--++(-90:7);
 \path[name path=bottom] (corner1)--(corner4);
 \path [name intersections={of = pathc7 and bottom}];
   \coordinate (A)  at (intersection-1);
   \path (A)--++(90:0.1) node { $\down$  };

 \path[name path=pathc7] (c9)--++(-90:7);
 \path[name path=bottom] (corner1)--(corner4);
 \path [name intersections={of = pathc7 and bottom}];
   \coordinate (A)  at (intersection-1);
   \path (A)--++(90:0.1) node { $\down$  };

\clip(corner1)--(corner2)--(corner3)--(corner4)--(corner1);

  \foreach \i in {1,3,5,7,9,11}
{
 \draw[thick, gray](c\i)--++(90:7);
 \draw[thick, gray](c\i)--++(-90:7);
\draw[thick, gray](d\i)--++(90:7);
\draw[thick, gray](d\i)--++(-90:7);
   }

\end{scope}

\begin{scope}

\path (0,0) coordinate (origin2);

 \foreach \i\j in {0,2,4,6,8,10}
{
\path(origin2)--++(45:0.5*\i) coordinate (a\i);
\path (origin2)--++(135:0.5*\i)  coordinate (b\j);
\draw[ line width=2 ](a\i) --++(135:5);
\draw[ line width=2 ](b\i) --++(45:5);
}

\draw[ very thick, fill=gray!20] (0,0) --++(45:4)--++(135:2)--++(-135:2) 
 --++(135:2)--++(-135:1) --++(135:1)--++(-135:1)--(0,0) ;

\clip(0,0) --++(45:4)--++(135:2)--++(-135:2) 
 --++(135:2)--++(-135:1) --++(135:1)--++(-135:1)--(0,0) ;
 
\path (0,0) coordinate (origin2);

 \foreach \i\j in {0,1,2,3,4,5,6,7,8,9,10,11,12}
{
\path (origin2)--++(45:0.5*\i) coordinate (a\i);
\path (origin2)--++(135:0.5*\i)  coordinate (b\j);}

 \foreach \i\j in {0,2,4,6,8,10,12}
{
\draw[line width=2  ](a\i)--++(135:14);
\draw[line width=2](b\j)--++(45:14);

\path (origin2)--++(45:0.5*\i)--++(135:14) coordinate (x\i);
\path (origin2)--++(135:0.5*\i)--++(45:14)  coordinate (y\j);

   }

\draw[  thick,magenta,gray](c1) --++(135:1) coordinate (cC1);
\draw[  thick,darkgreen,gray](cC1) --++(135:1) coordinate (cC1);
\draw[  thick,orange,gray](cC1) --++(135:1) coordinate (cC1);
\draw[  thick,lime,gray](cC1) --++(135:1) coordinate (cC1);
\draw[  thick,violet,gray](cC1) --++(135:1) coordinate (cC1);

\draw[  thick,gray,gray](c3) --++(135:1) coordinate (cC1);
\draw[  thick,magenta,gray](cC1) --++(135:1) coordinate (cC1);
\draw[  thick,darkgreen,gray](cC1) --++(135:1) coordinate (cC1);
\draw[  thick,orange,gray](cC1) --++(135:1) coordinate (cC1);
\draw[  thick,lime,gray](cC1) --++(135:1) coordinate (cC1);

  \draw[  thick,cyan,gray](c5) --++(135:1) coordinate (cC1);
 \draw[  thick,gray,gray](cC1) --++(135:1) coordinate (cC1);
\draw[  thick,magenta,gray](cC1) --++(135:1) coordinate (cC1);
\draw[  thick,darkgreen,gray](cC1) --++(135:1) coordinate (cC1);
\draw[  thick,orange,gray](cC1) --++(135:1) coordinate (cC1);
\draw[  thick,lime,gray](cC1) --++(135:1) coordinate (cC1);

  \draw[  thick,pink,gray](c7) --++(135:1) coordinate (cC1);
  \draw[  thick,cyan,gray](cC1) --++(135:1) coordinate (cC1);
 \draw[  thick,gray,gray](cC1) --++(135:1) coordinate (cC1);
\draw[  thick,magenta,gray](cC1) --++(135:1) coordinate (cC1);
\draw[  thick,darkgreen,gray](cC1) --++(135:1) coordinate (cC1);
\draw[  thick,orange,gray](cC1) --++(135:1) coordinate (cC1);
\draw[  thick,lime,gray,gray](cC1) --++(135:1) coordinate (cC1);

\draw[  thick,magenta,gray](d1) --++(45:1) coordinate (x1);
\draw[  thick,gray,gray](x1) --++(45:1) coordinate (x1);
 \draw[  thick,cyan,gray](x1) --++(45:1) coordinate (x1);
 \draw[  thick,pink,gray](x1) --++(45:1) coordinate (x1);

\draw[  thick,darkgreen,gray](d3) --++(45:1) coordinate (x1);
\draw[  thick,magenta,gray](x1) --++(45:1) coordinate (x1);
\draw[  thick,gray,gray](x1) --++(45:1) coordinate (x1);
 \draw[  thick,cyan,gray](x1) --++(45:1) coordinate (x1);
 \draw[  thick,pink,gray](x1) --++(45:1) coordinate (x1);

\draw[  thick, gray](d5) --++(45:1) coordinate (x1);
\draw[  thick, gray](x1) --++(45:1) coordinate (x1);
\draw[  thick, gray](x1) --++(45:1) coordinate (x1);
\draw[  thick, gray](x1) --++(45:1) coordinate (x1);
 \draw[  thick, ,gray](x1) --++(45:1) coordinate (x1);
 \draw[  thick ,gray](x1) --++(45:1) coordinate (x1);

\draw[  thick, gray](d7) --++(45:1) coordinate (x1);
\draw[  thick, gray](x1) --++(45:1) coordinate (x1);

\path(x1) --++(45:1) coordinate (x1);
\path(x1) --++(45:1) coordinate (x1);
\path(x1) --++(45:1) coordinate (x1);
\path(x1) --++(45:1) coordinate (x1);
\path(x1) --++(45:1) coordinate (x1);

\draw[very thick, gray](d9) --++(45:1) coordinate (x1);

\end{scope}


\draw[thick] (origin2)--(corner1)--(corner2)--(corner3)--(corner4)--(origin2);
\clip(origin2)--(corner1)--(corner2)--(corner3)--(corner4)--(origin2);

  \draw[ line width =2 ]
(0,0) --++(45:4)--++(135:2)--++(-135:2) 
 --++(135:2)--++(-135:1) --++(135:1)--++(-135:1)--(0,0) ;
\end{tikzpicture}
$$\caption
{
Along the top we picture the weight $\down \up \down \up \up \down \down \up \up \down  \in \Lambda_{5,5}$,
which  corresponds to the partition $(5,4,2^2)\in \mathscr{P}_{5,5}$. To obtain the corresponding minimal length coset representative of $S_5 \times S_5$ in $S_{10}$, simply 
fill each box in the partition with an $s_i$ generator. The corresponding product gives the minimal length coset representative (which when applied to the element $\varnothing$ of $\Lambda_{5,5}$ permutes the $\up$s and $\down$s so as to arrive at the given weight).
}\label{Figweightpartition}
\end{figure}

\subsection{Cups, caps and Kazhdan--Lusztig polynomials} 
 
 The following definitions come from \cite[Section 2]{MR2918294}.
    
 \begin{defn} 
\begin{itemize}[leftmargin=*]
 \item To each weight $\lambda$ we associate a {\sf cup diagram} $\underline{\lambda}$ and a {\sf cap diagram} $\overline{\lambda}$. To construct $\underline{\lambda}$,  
repeatedly find a pair of vertices labeled $\down$ $\up$ in order from left to right that are neighbours in the sense that there are only vertices already joined by cups in between. Join these new vertices together with a cup. Then repeat the process until there are no more such $\down$ $\up$ pairs. Finally draw rays down at all the remaining $\up$ and $\down$ vertices. The cap diagram $\overline{\lambda}$ is obtained by flipping $\underline{\lambda}$ through the horizontal axis. We stress that the vertices of the cup and cap diagrams  are not labeled. 
\item Let $\lambda$ and $\mu$ be weights. We can glue $\underline{\mu}$ under $\lambda$ to obtain a new diagram $\underline{\mu}\lambda$. We say that $\underline{\mu}\lambda$ is  {\sf oriented} if (i) the vertices at the ends of each cup in $\underline{\mu}$ are labelled by exactly one $\down$  and one $\up$ in the weight $\lambda$ and (ii) it is impossible to find two rays in $\underline{\mu}$ whose top vertices are labeled $\down$ $\up$  in that order from left to right in the weight $\lambda$. 
Similarly, we obtain a new diagram $\lambda \overline{\mu}$ by gluing $\overline{\mu}$ on top of $\lambda$. We say that $\lambda \overline{\mu}$ is oriented if $\underline{\mu} \lambda$ is oriented. 
\item Let $\lambda$, $\mu$ be weights such that $\underline{\mu}\lambda$ is oriented. We set the {\sf degree} of the diagram $\underline{\mu}\lambda$ (respectively $\lambda \overline{\mu}$) to be the number of clockwise oriented cups (respectively caps) in the diagram. 
\item Let $\la, \mu ,\nu$ be weights such that $\underline{\mu}\la$ and $\la \overline{\nu}$ are oriented. Then  we form a new diagram $\underline{\mu}\la\overline{\nu}$ by gluing $\underline{\mu}$ under and $\overline{\nu}$ on top of $\la$. We set ${\rm deg}(\underline{\mu}\la \overline{\nu}) = {\rm deg}(\underline{\mu}\la)+{\rm deg}(\la \overline{\nu})$.
\end{itemize}
\end{defn}

Examples are provided in Figure \ref{figure4} and \ref{cref-it2}.

  \begin{figure}[ht!]
  $$   \begin{tikzpicture} [scale=0.85]
		
		
		\path (4,1) coordinate (origin); 
		\path (origin)--++(0.5,0.5) coordinate (origin2);  
	 	\draw(origin2)--++(0:5.5); 
		\foreach \i in {1,2,3,4,5,...,10}
		{
			\path (origin2)--++(0:0.5*\i) coordinate (a\i); 
			\path (origin2)--++(0:0.5*\i)--++(-90:0.00) coordinate (c\i); 
			  }
		
		\foreach \i in {1,2,3,4,5,...,19}
		{
			\path (origin2)--++(0:0.25*\i) --++(-90:0.5) coordinate (b\i); 
			\path (origin2)--++(0:0.25*\i) --++(-90:0.9) coordinate (d\i); 
		}
		\path(a1) --++(90:0.12) node  {  $  \down   $} ;
		\path(a3) --++(90:0.12) node  {  $  \down   $} ;
		\path(a2) --++(-90:0.15) node  {  $  \up   $} ;
		\path(a4) --++(-90:0.15) node  {  $  \up   $} ;
		\path(a5) --++(-90:0.15) node  {  $  \up  $} ;
		\path(a6) --++(90:0.12) node  {  $  \down  $} ;
		\path(a7) --++(90:0.12) node  {  $  \down  $} ;
		\path(a8) --++(-90:0.15) node  {  $  \up  $} ;
		\path(a9) --++(-90:0.15) node  {  $  \up  $} ;
 \path(a10) --++(90:0.12) node  {  $  \down  $} ;

		\draw[    thick](c2) to [out=-90,in=0] (b3) to [out=180,in=-90] (c1); 
		\draw[    thick](c4) to [out=-90,in=0] (b7) to [out=180,in=-90] (c3);


		\draw[    thick](c8) to [out=-90,in=0] (b15) to [out=180,in=-90] (c7); 
 	
			\draw[    thick](c9) to [out=-90,in=0] (d15) to [out=180,in=-90] (c6);

		\draw[    thick](c5) --++(90:-1);  
		
				\draw[    thick](c10) --++(90:-1);

	\end{tikzpicture}\qquad
	  \begin{tikzpicture} [scale=0.85]
		
		
		\path (4,1) coordinate (origin); 
		\path (origin)--++(0.5,0.5) coordinate (origin2);  
	 	\draw(origin2)--++(0:5.5); 
		\foreach \i in {1,2,3,4,5,...,10}
		{
			\path (origin2)--++(0:0.5*\i) coordinate (a\i); 
			\path (origin2)--++(0:0.5*\i)--++(-90:0.00) coordinate (c\i); 
			  }
		
		\foreach \i in {1,2,3,4,5,...,19}
		{
			\path (origin2)--++(0:0.25*\i) --++(-90:0.5) coordinate (b\i); 
			\path (origin2)--++(0:0.25*\i) --++(-90:0.9) coordinate (d\i); 
		}
%
		
		\draw[    thick](c2) to [out=-90,in=0] (b3) to [out=180,in=-90] (c1); 
		\draw[    thick](c4) to [out=-90,in=0] (b7) to [out=180,in=-90] (c3);


		\draw[    thick](c8) to [out=-90,in=0] (b15) to [out=180,in=-90] (c7); 
 	
			\draw[    thick](c9) to [out=-90,in=0] (d15) to [out=180,in=-90] (c6);

		\draw[    thick](c5) --++(90:-1);  
		
				\draw[    thick](c10) --++(90:-1);

	\end{tikzpicture}$$
\caption{
Continuing with 
the weight $\la= \down \up \down \up \up \down \down \up \up \down  \in \Lambda_{5,5}$ from \cref{Figweightpartition}, we
 depict  the diagram  $\underline{\la}\la$ and the cup diagram $\underline{\la}$. 
}
\label{figure4}
	\end{figure}

\begin{defn}\cite[(5.12)]{MR2918294} 
For each $\la, \mu \in \Lambda_{m,n}$ we define the Kazhdan--Lusztig polynomial  $n_{\la \mu}(q)$ to be the monomial
$$
n_{\la,\mu}(q)= 
\begin{cases}
q^{\deg(\underline{\mu} \la)}		&\text{if $ \underline{\mu} \la $ is oriented}\\
0						&\text{otherwise.}
\end{cases}
$$
\end{defn}

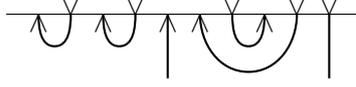
\begin{figure}[ht!]
%
%
%
%
%
%
%
%
%
%
%
%
%
%
%
%
%
%
%
%
%
%
%
%
%
%
%
%
%
$$ \begin{tikzpicture} [scale=0.85]
		
		
		\path (4,1) coordinate (origin); 
		\path (origin)--++(0.5,0.5) coordinate (origin2);  
	 	\draw(origin2)--++(0:5.5); 
		\foreach \i in {1,2,3,4,5,...,10}
		{
			\path (origin2)--++(0:0.5*\i) coordinate (a\i); 
			\path (origin2)--++(0:0.5*\i)--++(-90:0.00) coordinate (c\i); 
			  }
		
		\foreach \i in {1,2,3,4,5,...,19}
		{
			\path (origin2)--++(0:0.25*\i) --++(-90:0.5) coordinate (b\i); 
			\path (origin2)--++(0:0.25*\i) --++(-90:0.9) coordinate (d\i); 
		}
		\path(a2) --++(90:0.12) node  {  $  \down   $} ;
		\path(a4) --++(90:0.12) node  {  $  \down   $} ;
		\path(a1) --++(-90:0.15) node  {  $  \up   $} ;
		\path(a3) --++(-90:0.15) node  {  $  \up   $} ;
		\path(a5) --++(-90:0.15) node  {  $  \up  $} ;
		\path(a9) --++(90:0.12) node  {  $  \down  $} ;
		\path(a7) --++(90:0.12) node  {  $  \down  $} ;
		\path(a8) --++(-90:0.15) node  {  $  \up  $} ;
		\path(a6) --++(-90:0.15) node  {  $  \up  $} ;
 \path(a10) --++(90:0.12) node  {  $  \down  $} ;

		\draw[    thick](c2) to [out=-90,in=0] (b3) to [out=180,in=-90] (c1); 
		\draw[    thick](c4) to [out=-90,in=0] (b7) to [out=180,in=-90] (c3);


		\draw[    thick](c8) to [out=-90,in=0] (b15) to [out=180,in=-90] (c7); 
 	
			\draw[    thick](c9) to [out=-90,in=0] (d15) to [out=180,in=-90] (c6);

		\draw[    thick](c5) --++(90:-1);  
		
				\draw[    thick](c10) --++(90:-1);

	\end{tikzpicture}$$
\caption{The diagram $\underline{\mu}\la$ for 
 $\la=(4,3,1)$ and 
  $ 
\mu=  (5,4,2^2)
  $ in $\Lambda_{5,5}$. This diagram has three clockwise arcs, hence $\deg (\underline{\mu}\la) = 3$. }
\label{cref-it2}
\end{figure}

\begin{rmk}
As mentioned already, the set $\Lambda_{m,n}$ is a labelling set for the cosets of the product of symmetric groups $S_m \times S_n$ inside $S_{m+n}$. The polynomials described above are then the corresponding anti-spherical  Kazhdan--Lusztig   polynomials of type $(S_{m+n}, S_m \times S_n)$ as defined by Deodhar  for arbitrary parabolic Coxeter systems (see \cite{DD1});
the  first combinatorial description for type $(S_{m+n}, S_m \times S_n)$ was given by 
Lascoux--Sch\"{u}tzenberger  in \cite{MR646823}. 
\end{rmk}

\subsection{Regular weights}\label{regular}
A subset of the set of weights will play an important role in this paper. It can be defined in three different ways as given below.

\begin{prop} Let $m' = \min\{m,n\}$. For any $\la \in \Lambda_{m,n}$, we define 
$$\ell_t(\la) := \#\{ 1\leq j\leq t \, \text{in $\la$ labelled by $\down$}\}  \, - \,  \# \{1\leq j\leq t \, \text{in $\la$ labelled by $\up$}\}.$$
Then the following conditions are equivalent.
\begin{enumerate}
\item The partition $\la$ contains the staircase partition $(m', m'-1, m'-2, \ldots , 1)$.
\item The cup diagram $\underline{\la}$ contains $m'$ cups.
\item Assume $m'=m$. For each $1\leq t\leq m+n$ we have $\ell_t(\la) \geq 0$. 
\end{enumerate}
In this case, we say that $\la$ is {\sf regular} and we denote the set of all regular weights by $\Lambda_{m,n}^\circ$.
\end{prop}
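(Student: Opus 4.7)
The plan is to establish the equivalences first in the case $m' = m$ (where condition~(3) is meaningful) and then handle the case $m' = n < m$ by reducing to it via partition transpose. Throughout, I exploit the fact that a weight $\la$ is naturally identified with the lattice path from the west to the east corner of the $m \times n$ box (Russian orientation) that takes a north-easterly step at each $\down$ and a south-easterly step at each $\up$; the height of this path above its starting point after $t$ steps is precisely $\ell_t(\la)$. Under the bijection $\Lambda_{m,n} \cong \mathscr{P}_{m,n}$ recalled earlier, the Young diagram of $\la$ is the region below this path, so $\mu \subseteq \la$ if and only if $\ell_t(\la) \geq \ell_t(\mu)$ for all $t$.

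For (2) $\Leftrightarrow$ (3) in the case $m' = m$: the cup construction for $\underline{\la}$ is exactly the classical bracket matching with $\down$ opening and $\up$ closing. Reading left to right, the $\up$ at position $j$ is joined by a cup if and only if $\ell_{j-1}(\la) \geq 1$, i.e., an as-yet-unmatched $\down$ is available to it. Since $\ell$ is a $\pm 1$ walk from $\ell_0 = 0$ that can only decrease at $\up$-positions, an induction on $t$ shows $\ell_t \geq 0$ for every $t$ if and only if every $\up$ is matched. As the number of cups equals the number of matched $\up$-symbols, (2) (cups equal $m$) is equivalent to (3).

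For (1) $\Leftrightarrow$ (3) in the case $m' = m$: the staircase $S = (m, m-1, \ldots, 1)$ corresponds to the weight $(\down\up)^{m} \down^{n-m}$, whose heights are $\ell_t(S) = 0$ for even $t \leq 2m$, $\ell_t(S) = 1$ for odd $t \leq 2m$, and $\ell_t(S) = t - 2m$ for $t \geq 2m$. By the above path-containment principle, $\la \supseteq S$ if and only if $\ell_t(\la) \geq \ell_t(S)$ for all $t$. I will show this is equivalent to $\ell_t(\la) \geq 0$ for all $t$: the forward direction is immediate, and for the converse, $\ell_t(\la)$ has the same parity as $t$, so $\ell_t \geq 0$ together with $t$ odd forces $\ell_t \geq 1$; while for $t \geq 2m$ the bound of at most $m$ symbols $\up$ appearing anywhere in $\la$ forces $\ell_t(\la) \geq t - 2m$ automatically.

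Finally, for $m' = n < m$ I use the weight-level involution $w_1 w_2 \cdots w_{m+n} \mapsto \bar w_{m+n} \cdots \bar w_1$ (with the convention that $\bar{\up} = \down$ and $\bar{\down} = \up$): this is a bijection $\Lambda_{m,n} \to \Lambda_{n,m}$ realising the partition transpose $\la \mapsto \la^t$, preserving cup matchings, and fixing the self-transpose staircase. Thus (1) $\Leftrightarrow$ (2) in the case $m' = n$ reduces to the established $m' = m$ case. The main obstacle is the step above deducing $\ell_t(\la) \geq \ell_t(S)$ from the weaker-looking $\ell_t(\la) \geq 0$, which requires combining the parity of the walk with the global bound on the number of $\up$'s; the geometric identification of partition containment with the path-above condition is natural but deserves a careful check.
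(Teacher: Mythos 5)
Your proof is correct and amounts to a careful filling-in of the paper's one-sentence proof (``This follows immediately from the bijection between $\Lambda_{m,n}$ and $\mathscr{P}_{m,n}$, and the construction of $\underline{\la}$''), using exactly the same ingredients: the lattice-path reading of $\ell_t(\la)$ to translate (1) into a height inequality, the bracket-matching description of $\underline{\la}$ for (2), and the reversal-and-flip involution to reduce the case $m'=n<m$ to $m'=m$. One very small slip in wording: the claim that ``the $\up$ at position $j$ is joined by a cup if and only if $\ell_{j-1}(\la)\ge 1$'' is literally true only when every $\up$ at a position $<j$ is already matched (otherwise unmatched $\up$'s to the left make fewer $\down$'s available than $\ell_{j-1}$ suggests), but since your induction invokes this criterion only at the first unmatched $\up$, where the caveat is vacuous, the argument goes through.
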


\begin{proof}
This follows immediately from the bijection between $\Lambda_{m,n}$ and $\mathscr{P}_{m,n}$, and the construction of $\underline{\la}$.
\end{proof}

\begin{rmk} (1) There is a similar description to (3) above in the case $m'=n$ but we will not need it in this paper.\\
(2) Regular weights are called weights of maximal defect in \cite{MR2918294} and \cite{MR2600694}, where the defect of a weight $\la$ is defined to be the number of cups in $\underline{\la}$.
\end{rmk}

For each $\la\in \Lambda_{m,n}$, we will associate a regular weight $\la^\circ\in \Lambda_{m,n}^\circ$ which will play an important role in the representation theory of the  arc algebras.

\begin{defn}\label{lambdacirc} \cite[Section 6]{MR2600694}
For each $\la\in \Lambda_{m,n}$, we define the regular weight $\la^\circ\in \Lambda_{m,n}^\circ$ as follows. 
Start with the weight $\la$. Now add clockwise cups connecting $\up$ $\down$ pairs of vertices which are neighbours in the sense that there are no vertices in betwwen them not yet connected by cups. When no more such pairs are left, add nested anticlockwise cups connecting as many vertices as possible. Finally, add rays on the remaining vertices. Then $\la^\circ$ is the weight whose cup diagram $\underline{\la^\circ}$ is the one just constructed.
\end{defn}
 An example is given in Figure \ref{Figcirc}.
 
 \begin{figure}[ht!]
$$   \begin{tikzpicture} [scale=0.85]

		\path (4,1) coordinate (origin); 
 		\path (origin)--++(0.5,0.5) coordinate (origin2);  
	 	\draw(origin2)--++(0:6); 
		\foreach \i in {1,2,3,4,5,...,10,11}
		{
			\path (origin2)--++(0:0.5*\i) coordinate (a\i); 
			\path (origin2)--++(0:0.5*\i)--++(-90:0.00) coordinate (c\i); 
			  }
		
		\foreach \i in {1,2,3,4,5,...,19}
		{
			\path (origin2)--++(0:0.25*\i) --++(-90:0.5) coordinate (b\i); 
			\path (origin2)--++(0:0.25*\i) --++(-90:0.9) coordinate (d\i); 
		}
		\path(a1) --++(90:0.12) node  {  $  \down   $} ;
		\path(a3) --++(90:0.12) node  {  $  \down   $} ;
		\path(a2) --++(90:0.12) node  {  $  \down   $} ;
		\path(a4) --++(-90:0.15) node  {  $  \up   $} ;
		\path(a5) --++(-90:0.15) node  {  $  \up  $} ;
		\path(a6) --++(90:0.12) node  {  $  \down  $} ;
		\path(a7) --++(-90:0.15) node  {  $  \up   $} ;
		\path(a8) --++(-90:0.15) node  {  $  \up  $} ;
 \path(a9) --++(90:0.12) node  {  $  \down  $} ; 
 \path(a10) --++(90:0.12) node  {  $  \down  $} ; 
 				\path(a11) --++(-90:0.15) node  {  $  \up  $} ;

	\draw[    thick](c1) --++(90:-1.2) coordinate(X);  
		\draw[   densely dotted, thick](X) --++(90:-0.4) coordinate(X);  

		\draw[    thick](c4) to [out=-90,in=0] (b7) to [out=180,in=-90] (c3); 
		\draw[    thick](c6) to [out=-90,in=0] (b11) to [out=180,in=-90] (c5); 		
		\draw[    thick](c9) to [out=-90,in=0] (b17) to [out=180,in=-90] (c8); 
		\path(b17)--++(-90:0.5) coordinate (b17);
		\draw[    thick](c10) to [out=-90,in=0] (b17) to [out=180,in=-90] (c7);

		\path(b13)--++(-90:1) coordinate (b13);

		\draw[    thick](c11) to [out=-90,in=0] (b13) to [out=180,in=-90] (c2); 				
%
%
%
%
%
%
%
%
%
		
%
\draw(4,1.5) node {$\la$};

\draw(4,0.85) node {$\phantom{_\circ}\underline{\la^\circ}$};		 
	\end{tikzpicture}
$$
\vspace{0.5cm}
$$
   \begin{tikzpicture} [scale=0.85]

		\path (4,1) coordinate (origin); 
 		\path (origin)--++(0.5,0.5) coordinate (origin2);  
	 	\draw(origin2)--++(0:6); 
		\foreach \i in {1,2,3,4,5,...,10,11}
		{
			\path (origin2)--++(0:0.5*\i) coordinate (a\i); 
			\path (origin2)--++(0:0.5*\i)--++(-90:0.00) coordinate (c\i); 
			  }
		
		\foreach \i in {1,2,3,4,5,...,19}
		{
			\path (origin2)--++(0:0.25*\i) --++(-90:0.5) coordinate (b\i); 
			\path (origin2)--++(0:0.25*\i) --++(-90:0.9) coordinate (d\i); 
		}
		\path(a1) --++(90:0.12) node  {  $  \down   $} ;
		\path(a3) --++(90:0.12) node  {  $  \down   $} ;
		\path(a2) --++(90:0.12) node  {  $  \down   $} ;
		\path(a4) --++(-90:0.15) node  {  $  \up   $} ;
		\path(a5) --++(90:0.12) node  {  $  \down  $} ;
		\path(a6) --++(-90:0.15) node  {  $  \up  $} ;
		\path(a7) --++(90:0.12) node  {  $  \down  $} ;
		\path(a8) --++(90:0.12) node  {  $  \down  $} ; 
  				\path(a11) --++(-90:0.15) node  {  $  \up  $} ;
 				\path(a10) --++(-90:0.15) node  {  $  \up  $} ;
 				\path(a9) --++(-90:0.15) node  {  $  \up  $} ;

%
%

%
%
%
%
%
%
%
%
%
		
%

\draw(4,1.5) node {$\phantom{_\circ} {\la^\circ}$};		 
	\end{tikzpicture}	$$   \caption{An example of the construction of the regular $\la^\circ = (5^3,4,3^2) \in \Lambda_{5,6}$ associated to  $\la = (5^3,3,1^2) \in \Lambda_{5,6}$}\label{Figcirc}
 \end{figure}
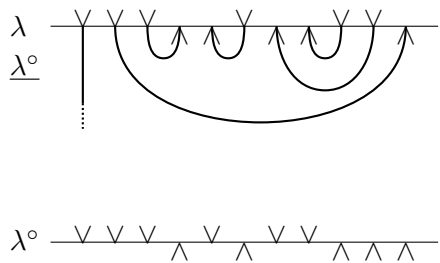

\subsection{Inverse  Kazhdan--Lusztig   polynomials}\label{SectioninverseKL}

In \cite[Section 5]{MR2600694}, Brundan and Stroppel give a closed combinatorial description of another family of polynomials $p_{\la \mu}(q)$. These polynomials can be defined by specifying that the matrix $(p_{\la \mu}(-q))$ is the inverse of the matrix of  anti-spherical Kazhdan--Lusztig   polynomials $(n_{\la\mu}(q))$ (see \cite[Corollary 5.4]{MR2600694}). We will not need their explicit description but will instead recall the recursive definition of these and deduce some useful properties. We start by introducing some notation which will be useful throughout the paper.

\medskip

\begin{defn}\label{plusminusprime}  For $1\leq i< m+n$ we will write $\Lambda_{m,n}^{\down \up}(i)$ for the subset of all weights $\la \in \Lambda_{m,n}$ with vertices $i$ and $i+1$ labelled by $\down$ and $\up$ respectively. Similarly, we will write $\Lambda_{m,n}^{\up \down}(i)$ for the subset of all weights $\la\in \Lambda_{m,n}$ with vertices $i$ and $i+1$ labelled by $\up$ and $\down$ respectively. Now fix some $1\leq i < m+n$.  
Given any $\la\in \Lambda_{m,n}^{\down \up}(i)\cup \Lambda_{m,n}^{\up \down}(i)$, we will denote by $\la'$ the unique weight in $\Lambda_{m-1,n-1}$ obtained from $\la$ by removing the symbols in position $i$ and $i+1$. 
Conversely, given any $\la'\in \Lambda_{m-1,n-1}$, we will denote by $\la^+$, respectively $\la^-$, 
 the unique weight  in $\Lambda_{m,n}^{\down \up}(i)$, respectively $\Lambda_{m,n}^{\up \down}(i)$, obtained from $\la'$ by inserting $\down \up$, respectively $\up \down$, between the first
  $(i-1)$ symbols and the last $(m+n-i-1)$ symbols.
Examples are given in Figure \ref{Figplusminus}.
\end{defn}

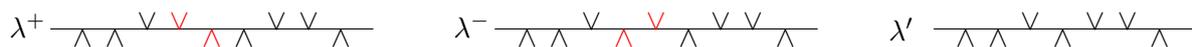
\begin{figure}[ht!]
$$  
\hspace{-0.3cm}   \begin{tikzpicture} [scale=0.85]
	 		\path (4,1) coordinate (origin); 
 		\path (origin)--++(0.5,0.5) coordinate (origin2);  
	 	\draw(origin2)--++(0:5); 
		\foreach \i in {1,2,3,4,5,...,10,11}
		{
			\path (origin2)--++(0:0.5*\i) coordinate (a\i); 
			\path (origin2)--++(0:0.5*\i)--++(-90:0.00) coordinate (c\i); 
			  }
		
		\foreach \i in {1,2,3,4,5,...,19}
		{
			\path (origin2)--++(0:0.25*\i) --++(-90:0.5) coordinate (b\i); 
			\path (origin2)--++(0:0.25*\i) --++(-90:0.9) coordinate (d\i); 
		}
		\path(a3) --++(90:0.12) node  {  $  \down   $} ;
		\path(a4) --++(90:0.12) node  {\color{red}  $  \down   $} ; 
		\path(a7) --++(90:0.12) node  {  $  \down   $} ;		
		\path(a8) --++(90:0.12) node  {  $  \down   $} ;		
		
		\path(a1) --++(-90:0.15) node  {  $  \up   $} ;
		\path(a2) --++(-90:0.15) node  {  $  \up   $} ;
		\path(a5) --++(-90:0.15) node  { \color{red} $  \up   $} ;
		\path(a6) --++(-90:0.15) node  {  $  \up   $} ;
		\path(a9) --++(-90:0.15) node  {  $  \up   $} ;
		
\draw(4,1.5) node {$\phantom{_+} {\la^+}$};		 
	\end{tikzpicture}	
	\quad\!\!
	   \begin{tikzpicture} [scale=0.85]
	 		\path (4,1) coordinate (origin); 
 		\path (origin)--++(0.5,0.5) coordinate (origin2);  
	 	\draw(origin2)--++(0:5); 
		\foreach \i in {1,2,3,4,5,...,10,11}
		{
			\path (origin2)--++(0:0.5*\i) coordinate (a\i); 
			\path (origin2)--++(0:0.5*\i)--++(-90:0.00) coordinate (c\i); 
			  }
		
		\foreach \i in {1,2,3,4,5,...,19}
		{
			\path (origin2)--++(0:0.25*\i) --++(-90:0.5) coordinate (b\i); 
			\path (origin2)--++(0:0.25*\i) --++(-90:0.9) coordinate (d\i); 
		}
		\path(a3) --++(90:0.12) node  {  $  \down   $} ;
		\path(a5) --++(90:0.12) node  {\color{red}  $  \down   $} ; 
		\path(a7) --++(90:0.12) node  {  $  \down   $} ;		
		\path(a8) --++(90:0.12) node  {  $  \down   $} ;		
		
		\path(a1) --++(-90:0.15) node  {  $  \up   $} ;
		\path(a2) --++(-90:0.15) node  {  $  \up   $} ;
		\path(a4) --++(-90:0.15) node  { \color{red} $  \up   $} ;
		\path(a6) --++(-90:0.15) node  {  $  \up   $} ;
		\path(a9) --++(-90:0.15) node  {  $  \up   $} ;
		
\draw(4,1.5) node {$\phantom{_-} {\la^-}$};		 
	\end{tikzpicture}	
	\quad
	   \begin{tikzpicture} [scale=0.85]
	 		\path (4,1) coordinate (origin); 
 		\path (origin)--++(0.5,0.5) coordinate (origin2);  
	 	\draw(origin2)--++(0:4); 
		\foreach \i in {1,2,3,4,5,...,10,11}
		{
			\path (origin2)--++(0:0.5*\i) coordinate (a\i); 
			\path (origin2)--++(0:0.5*\i)--++(-90:0.00) coordinate (c\i); 
			  }
		
		\foreach \i in {1,2,3,4,5,...,19}
		{
			\path (origin2)--++(0:0.25*\i) --++(-90:0.5) coordinate (b\i); 
			\path (origin2)--++(0:0.25*\i) --++(-90:0.9) coordinate (d\i); 
		}
		\path(a3) --++(90:0.12) node  {  $  \down   $} ;
		\path(a5) --++(90:0.12) node  {  $  \down   $} ;		
		\path(a6) --++(90:0.12) node  {  $  \down   $} ;		
		
		\path(a1) --++(-90:0.15) node  {  $  \up   $} ;
		\path(a2) --++(-90:0.15) node  {  $  \up   $} ;
		\path(a4) --++(-90:0.15) node  {  $  \up   $} ;
		\path(a7) --++(-90:0.15) node  {  $  \up   $} ;
		
\draw(4,1.5) node {$ {\la'}$};		 
	\end{tikzpicture}	$$  \caption{Examples of the weights $\la'\in \Lambda_{4,3}$, $\la^+\in \Lambda_{5,4}^{\down \up}(4)$ and $\la^-\in\Lambda_{5,4}^{\up \down}(4)$. We have not drawn $\la$ explicitly here, but notice that $\la \in \{\la^+, \la^-\}$ by definition. }\label{Figplusminus}
\end{figure}

\begin{defn} \cite[Lemma 5.2]{MR2600694}
For each $\la, \mu\in \Lambda_{m,n}$, we define the polynomial $p_{\la \mu}(q)\in \mathbb{N}_0[q]$ inductively as follows.
We set $p_{\la \la} = 1$ and $p_{\la \mu}=0$ if $\mu \ngeq \la$. Now if $\mu >\la$, we can find $i$ such that $\la = \la^+ \in \Lambda^{\down \up}_{m,n}(i)$ and we set
$$ p_{\la \mu}(q) = p_{\la^+ \mu}(q) = \left\{ \begin{array}{ll} p_{\la' \mu'}(q) + q p_{\la^- \mu} & \mbox{if $\mu = \mu^+\in \Lambda_{m,n}^{\down \up}(i)
$}\\ 
qp_{\la^- \mu}(q) & \mbox{otherwise.} \end{array}\right.$$
\end{defn}

We will only require some properties which follow easily from the inductive description given above. We start by making the following definition.

\begin{defn}
For $\la ,\mu\in \Lambda_{m,n}$ we write $\la \rightarrow \mu$ if $\mu$ can be obtained from $\la$ by swapping a symbol $\down$ in position $i$ with a symbol $\up$ in position $j>i$  such that the sequence of symbols in between, that is  in position $i+1, i+2, \ldots , j-1$, belongs to $\Lambda^\circ_{t,t}$ with $2t=j-i-1\geq 0$.
\end{defn}

\begin{prop}\label{inverseKL}
Assume $m\leq n$. Then we have 
\begin{align}\label{inverseKL1}
p_{(m^n)(m^m)}(q) = q^{m(n-m)}.
\end{align}
  Write $p_{\la \mu}(q) = \sum_{k\geq 0}p^{(k)}_{\la \mu} q^k$ with $p^{(k)}_{\la \mu}\in \mathbb{N}_0$. 
Then if $p_{\la \mu}^{(k)} \neq 0$ then we have
\begin{align}\label{inverseKL2}
\la = \la_{0} \rightarrow \la_{1} \rightarrow \la_{2} \rightarrow \ldots \rightarrow \la_{k} = \mu
\end{align}
for some $\la_{i}\in \Lambda_{m,n}$.
\end{prop}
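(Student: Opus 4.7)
Both statements follow from unfolding the recursion in \cref{inverseKL}.

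\textbf{For \eqref{inverseKL1}}, I induct on $m$ (the case $m = 0$ is trivial). Writing $(m^n) = \down^n\up^m$ and $(m^m) = \down^m\up^m\down^{n-m}$, I observe that the only $\down\up$ pair of $\mu = (m^m)$ lies at position $m$, so for each $i \in \{m+1,\dots,n\}$ the ``otherwise'' branch of the recursion applies. Iterating $n-m$ times (starting at $i = n$ and sliding left) gives $p_{(m^n)(m^m)}(q) = q^{n-m}\, p_{\tau,\mu}(q)$ with $\tau = \down^m\up\down^{n-m}\up^{m-1}$. Now $\tau$ and $\mu$ both carry a $\down\up$ at $i = m$, so the ``plus'' branch yields $p_{\tau,\mu}(q) = p_{\tau',\mu'}(q) + q\, p_{\tau^-,\mu}(q)$ with $\tau' = ((m-1)^{n-1})$ and $\mu' = ((m-1)^{m-1})$ in $\Lambda_{m-1,n-1}$; the inductive hypothesis gives $p_{\tau',\mu'}(q) = q^{(m-1)(n-m)}$. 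A short transpose computation shows that the partition of $\tau^-$ satisfies $(\tau^-)^t_m = m-1 < m$, so $(m^m) \not\subset \tau^-$, i.e.\ $\tau^- \not\geq \mu$, and hence $p_{\tau^-,\mu} = 0$. Multiplying gives $q^{n-m}\cdot q^{(m-1)(n-m)} = q^{m(n-m)}$.

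\textbf{For \eqref{inverseKL2}}, I induct on $m + n + k$. The base case and the case where $\la$ is maximal (no $\down\up$ pair, so $\la = \up^m\down^n = \varnothing$) are trivial. Otherwise I pick $i$ with $\la = \la^+ \in \Lambda^{\down\up}_{m,n}(i)$ and apply the recursion. In the ``otherwise'' branch, $p_{\la\mu}^{(k)} = p_{\la^-\mu}^{(k-1)}$, and I prepend the elementary arrow $\la \to \la^-$ (empty intermediate weight, $t = 0$) to a chain of length $k-1$ supplied by the inductive hypothesis. In the ``plus'' branch, the $p_{\la^-\mu}^{(k-1)}$ summand is handled as before; for the $p_{\la'\mu'}^{(k)}$ summand, the inductive hypothesis in $\Lambda_{m-1,n-1}$ produces a chain $\la' = \sigma_0 \to \sigma_1 \to \cdots \to \sigma_k = \mu'$, which I lift to $\Lambda_{m,n}$ via $\sigma \mapsto \sigma^+$.

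The key verification in this lift is that each arrow $\sigma_s \to \sigma_{s+1}$ remains an arrow after insertion of $\down\up$ at positions $i, i+1$. When the swap endpoints of the original arrow lie entirely on one side of $i$ the intermediate is unaltered, so the claim is immediate. The only non-routine case is when the swap straddles $i$, say with $\down$ at $i' < i$ and $\up$ at $j' \geq i$: then the lifted intermediate consists of the original regular weight $\nu \in \Lambda^\circ_{t,t}$ with an adjacent $\down\up$ pair inserted at positions $i, i+1$. The cup construction for the lift pairs this inserted $\down\up$ first and builds the remaining cups of $\nu$ unchanged, so it produces $t+1$ cups; hence the lifted intermediate lies in $\Lambda^\circ_{t+1,t+1}$ as required. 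The main technical obstacles are the vanishing of $p_{\tau^-,\mu}$ in \eqref{inverseKL1} and this straddling-case lift in \eqref{inverseKL2}, both handled by elementary cup- and Young-diagram bookkeeping.
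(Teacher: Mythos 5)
Your argument is correct. For part \eqref{inverseKL2} it follows the paper's proof in essence: both unwind the recursion, lift a chain from $\Lambda_{m-1,n-1}$ by inserting $\down\up$ at positions $i,i+1$, and prepend the elementary arrow $\la^+\to\la^-$ to handle the $q\,p_{\la^-\mu}$ term. You replace the paper's nested induction (on $m$, then downward on $\la$) with a single induction on $m+n+k$; this works because the $\la'\mu'$ term drops $m+n$ by two while the $\la^-$ term drops $k$ by one. More usefully, you spell out the one step the paper states without justification, namely that $\sigma\mapsto\sigma^+$ sends arrows to arrows. Your case split (endpoints both on one side of $i$ versus straddling $i$), together with the observation that inserting an adjacent $\down\up$ into $\nu\in\Lambda^\circ_{t,t}$ produces a weight in $\Lambda^\circ_{t+1,t+1}$ because the inserted pair is immediately closed into a cup and the remaining matching of $\nu$ is untouched, is exactly the verification that ``in particular'' hides, and is a genuine addition.

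For part \eqref{inverseKL1} you take a more circuitous route than the paper. The paper simply iterates the ``otherwise'' rule all $m(n-m)$ times --- sliding each $\up$ past the final $n-m$ $\down$'s, never touching the unique $\down\up$-position $m$ of $(m^m)$ --- and arrives directly at $p_{(m^m)(m^m)}=1$. You slide only the leftmost $\up$, trigger the ``plus'' branch once, and induct on $m$, which forces the extra vanishing statement $p_{\tau^-\mu}=0$; your transpose computation $(\tau^-)^t_m=m-1$ does establish $(m^m)\not\subseteq\tau^-$, so that claim is correct. One small slip: you write ``$\tau^-\not\geq\mu$'', but $(m^m)\not\subseteq\tau^-$ translates (via $\la\leq\mu\Leftrightarrow\mu\subseteq\la$) to $\mu\not\geq\tau^-$, which is what the rule $p_{\la\mu}=0$ for $\mu\ngeq\la$ actually requires; in fact $\tau^-\geq\mu$ does hold here. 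The conclusion $p_{\tau^-\mu}=0$ is unaffected.
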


\begin{proof}
We first consider \eqref{inverseKL1}. Recall that $(m^n) = \down \ldots \down \up \ldots \up$ ($n$ $\down$'s followed by $m$ $\up$'s) and $(m^m) = \down \ldots \down \up \ldots \up \down \ldots \down$ ($m$ $\down$'s followed by $m$ $\up$'s followed by $n-m$ $\down$'s). So, by repeatedly applying the rule $p_{\la (m^m)}(q) = qp_{\la^-(m^m)}(q)$ we can move all the $\up$ symbols in the weight $(m^n)$ to the left, past the last $n-m$ $\down$ symbols and hence obtain
$$p_{(m^n)(m^m)} (q)= q^{m(n-m)}p_{(m^m)(m^m)}(q) = q^{m(n-m)}.$$
(Note that we have $m$ $\up$ symbols moving past $n-m$ $\down$ symbols so we apply the rule a total of  $m(n-m)$ times.)

We now consider \eqref{inverseKL2}.  We proceed by induction on $m\in\ZZ_{\geq0}$. If $m=0$ there is nothing to prove. Now assume that the result holds for $m-1$. We will use downwards induction on $\la$. If $\la$ is maximal then $p_{\la \mu}(q)\neq 0$ implies that $\mu = \lambda$ and there is nothing to prove. Now take $\la$ non-maximal and assume that the result holds for all weights greater than $\lambda$. Let $\mu\in \Lambda_{m,n}$ with $p_{\la \mu}(q)\neq 0$. If $\mu =\la$, then again  there is nothing to prove.  So assume $\mu >\la$ then we have $\la = \la^+\in \Lambda^{\down \up}_{m,n}(i)$ for some $i\geq1$ and we have
$$p_{\la \mu}(q) = p_{\la^+ \mu}(q) = \left\{ \begin{array}{ll} p_{\la' \mu'}(q) + q p_{\la^- \mu} & \mbox{if $\mu = \mu^+\in \Lambda_{m,n}^{\down \up}(i)$}\\ 
q p_{\la^- \mu}(q) & \mbox{otherwise.} \end{array}\right.$$
By induction, as $\la'\in \Lambda_{m-1,n-1}$, we have that if $p_{\la' \mu'}^{(k)} \neq 0$ then we have 
$$\la' = \la'_{0} \rightarrow \la'_{1}  \rightarrow \ldots \rightarrow \la'_{k} = \mu'.$$
Now, note that for each $\la'_{t}\in \Lambda_{m-1,n-1}$, we have that the corresponding $\la^+_t$ is obtained from $\la'_t$ by adding a pair of neighbouring symbols $\down \up$ in positions $i$ and $i+1$. In particular, if $\la'_t \rightarrow \la'_{t+1}$ then we have $\la^+_t \rightarrow \la^+_{t+1}$. Thus we obtain
$$\la = \la^+ = \la^+_{0} \rightarrow \la^+_{1}  \rightarrow \ldots \rightarrow \la^+_{k} = \mu^+$$
as required. 
Finally, assume $p_{\la^- \mu}^{(k-1)}\neq 0$. Note that $\la^- > \la = \la^+$, so by induction we have 
$$\la^- = \la_0 \rightarrow \la_1 \rightarrow \ldots \rightarrow \la_{k-1} = \mu.$$
Now as $\la = \la^+ \rightarrow \la^-$ ($\la^-$ is obtained from $\la^+$ by swapping the symbols $\down \up$ in position $i$ and $i+1$), we obtain  the required sequence.
\end{proof}

We will need the following combinatorial lemma in Section 8.

\begin{lem}\label{littleclaim}
Let $\la , \mu\in \Lambda_{m,n}$ and assume that $\la \rightarrow \mu$. Then we have 
$$\min \{ \ell_h(\mu) \, : \, \mbox{$h$ is labelled by $\up$ in $\mu$}\} \geq \min \{\ell_h(\la) \, : \, \mbox{$h$ is labelled by $\up$ in $\la$}\} \, - \, 1.$$
\end{lem}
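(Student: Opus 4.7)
The plan is to track how the function $\ell_h$ changes under a single $\rightarrow$-swap and then perform a short case analysis over the $\up$-positions of $\mu$.

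First I would unpack the hypothesis. The relation $\la \rightarrow \mu$ provides positions $i < j$ with $\la_i = \down$, $\la_j = \up$ and with the intervening sub-word $\la_{i+1}\cdots\la_{j-1}$ in $\Lambda^\circ_{t,t}$ for some $t$ with $2t = j-i-1$. Since only positions $i$ and $j$ are altered, a direct count gives $\ell_h(\mu) = \ell_h(\la)$ for $h < i$ or $h \geq j$, and $\ell_h(\mu) = \ell_h(\la) - 2$ for $i \leq h \leq j-1$. The set of $\up$-positions of $\mu$ is that of $\la$ with $j$ removed and $i$ added in its place.

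The only non-obvious ingredient is the identity $\ell_{i-1}(\la) = \ell_j(\la)$, which is forced by balance of the middle segment: the sub-word $\la_{i+1}\cdots\la_{j-1} \in \Lambda^\circ_{t,t}$ has equal numbers of $\down$ and $\up$, so $\ell_{j-1}(\la) = \ell_i(\la)$, and then $\ell_j(\la) = \ell_{j-1}(\la) - 1 = \ell_i(\la) - 1 = \ell_{i-1}(\la)$. Writing $M_\la$ for the right-hand minimum in the lemma, the fact that $j$ is an $\up$-position of $\la$ now yields $\ell_{i-1}(\la) = \ell_j(\la) \geq M_\la$, which is exactly what is needed to control the new $\up$-position introduced by the swap.

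Finally I would case-split on each $h$ labelled $\up$ in $\mu$: for $h < i$ or $h > j$ the value $\ell_h(\mu) = \ell_h(\la) \geq M_\la$ is unchanged; for the new $\up$-position $h = i$ one has $\ell_i(\mu) = \ell_i(\la) - 2 = \ell_{i-1}(\la) - 1 \geq M_\la - 1$ by the identity above; and for $i < h < j$ with $\la_h = \up$, non-negativity of the partial sums inside the regular segment gives $\ell_h(\la) \geq \ell_i(\la) = \ell_{i-1}(\la)+1 \geq M_\la + 1$, so $\ell_h(\mu) = \ell_h(\la) - 2 \geq M_\la - 1$. Taking the minimum over all such $h$ proves the inequality. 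The main obstacle, such as it is, is spotting the balance identity $\ell_{i-1}(\la) = \ell_j(\la)$; once that is in hand, everything else is bookkeeping.
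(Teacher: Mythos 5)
Your proof is correct and follows essentially the same route as the paper: both track how $\ell_h$ shifts under the swap (down by $2$ on $[i, j-1]$, unchanged elsewhere), invoke balance of the middle segment to relate $\ell_i(\mu)$ to $\ell_j(\la)$, and use non-negativity of partial sums inside the regular segment for $\up$-positions strictly between $i$ and $j$. The only cosmetic difference is that the paper records the first observation as $\ell_i(\mu) = \ell_j(\la) - 1$ whereas you write it as $\ell_i(\mu) = \ell_i(\la) - 2$ together with the identity $\ell_{i-1}(\la) = \ell_j(\la)$; these are equivalent.
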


\begin{proof} Assume that $\mu$ is obtained from $\la$ by swapping the symbols $\down$ $\up$ in positions $i<j$ respectively.
By definition of $\la \rightarrow \mu$ and the $\ell_h$ function we have
$$\ell_i (\mu) = \ell_j(\la) -1$$
and 
$$\ell_k(\mu) = \left\{ \begin{array}{ll} \ell_k(\la) - 2 & \mbox{for $i<k<j$}\\ \ell_k(\la) & \mbox{for $k\geq j$ or $k<i$.}\end{array}\right.$$
Now note that, as the symbols in positions $i+1, \ldots , j-1$ in $\la$ (and $\mu$) form a weight in $\Lambda^\circ_{t,t}$ for some $t\geq 0$ we must have $\ell_k(\la) \geq \ell_j(\la) + 1$ for all $i<k<j$. This implies that for any $i<k<j$ with $k$ labelled by $\up$ we have 
$$\ell_k(\la) \geq \min \{\ell_h(\la) \, : \, \mbox{$h$ is labelled by $\up$ in $\la$}\} + 1.$$
This proves the claim.
\end{proof}

\section{The extended Khovanov arc algebras $K^m_n$}

We now introduce the quasi-hereditary algebras of interest in this paper, the      extended Khovanov arc algebras, 
and recall some of the results  concerning  their representation theory from \cite{MR2600694,MR2918294} which will be useful in what follows.
\subsection{Definition}

 We now recall the definition of  the extended Khovanov arc algebras studied in  \cite{MR2600694,MR2781018,MR2955190,MR2881300}. 
Let $\Bbbk$ be a field. We define $K^m_n$ to be the $\Bbbk$-algebra spanned by the set of diagrams 
$$
\{
\underline{\la}
\mu \overline{\nu}
\mid \la,\mu,\nu \in \Lambda_{m,n} \text{ such that }
\mu\overline{\nu},  \underline{\la}
\mu \text{  are oriented}\}
$$
 with the multiplication defined as follows.
First set 
 $$(\underline{\la}
\mu \overline{\nu})(\underline{\alpha}
\beta \overline{\gamma}) = 0 \quad \mbox{unless $\nu = \alpha$}.$$ 
To compute $(\underline{\la}
\mu \overline{\nu})(\underline{\nu}
\beta \overline{\gamma})$ place $(\underline{\la}
\mu \overline{\nu})$ under $(\underline{\nu}
\beta \overline{\gamma})$ then follow the \lq surgery' procedure.
This surgery combines two circles into one or splits one circle into two  using the following rules for re-orientation (where we use the notation
$1=\text{anti-clockwise circle}$, $x=\text{clockwise circle}$, $y=\text{oriented strand}$).  We have the splitting rules 
$$1 \mapsto 1 \otimes x + x \otimes 1,
\quad
 x \mapsto x \otimes x,
 \quad
 y \mapsto x \otimes y. 
 $$ 
 and the merging rules 
\begin{align*}
1 \otimes 1 \mapsto 1, 
\quad
1 \otimes x \mapsto x,
\quad
 x \otimes 1 \mapsto x,
\quad
x \otimes x \mapsto 0,
\quad
1 \otimes y  \mapsto y,\quad
x \otimes y \mapsto 0,
\end{align*}
\begin{align*}
y \otimes y  \mapsto
\left\{
\begin{array}{ll}
y\otimes y&\text{if both strands are propagating,  
one   is
 }\\
&\text{$\up$-oriented and  the other   is $\down$-oriented;}\\
0&\text{otherwise.} \end{array}
\right.
\end{align*}


\begin{eg}\label{brace-surgery}
We have the following product of Khovanov diagrams

$$   \begin{minipage}{3.1cm}
 \begin{tikzpicture}  [scale=0.76]

 \draw(-0.25,0)--++(0:3.75) coordinate (X);

   \draw[thick](0 , 0.09) node {$\scriptstyle\down$};        
   \draw(2,0.09) node {$\scriptstyle\down$};
    \draw[thick](1 ,-0.09) node {$\scriptstyle\up$};
   \draw[thick](3 ,-0.09) node {$\scriptstyle\up$};

    \draw
    (0,0)
      to [out=90,in=180] (0.5,0.5)  to [out=0,in=90] (1,0)
        (1,0)
 to [out=-90,in=180] (1.5, -0.5) to [out=0,in= -90] (2,0)
  to [out=90,in=180] (2.5,0.5)  to [out=0,in=90] (3,0)  
   to [out=-90,in=0] (1.5, -0.75) to [out=180,in= -90] (0,0)
  ;

\draw[gray!70,<->,thick](1.5,-0.8)-- (1.5,-2.5+0.8);

   \draw(-0.25,-2.5)--++(0:3.75) coordinate (X);

   \draw[thick](0 , 0.09-2.5) node {$\scriptstyle\down$};        
   \draw(2,0.09-2.5) node {$\scriptstyle\down$};

   \draw[thick](1 ,-0.09-2.5) node {$\scriptstyle\up$};
   \draw[thick](3 ,-0.09-2.5) node {$\scriptstyle\up$};

    \draw
    (0,-2.5)
      to [out=-90,in=180] (0.5,-2.5-0.5)  to [out=0,in=-90] (1,-2.5-0)
        (1,-2.5-0)
 to [out=90,in=180] (1.5, -2.5+0.5) to [out=0,in= 90] (2,-2.5-0)
  to [out=-90,in=180] (2.5,-2.5-0.5)  to [out=0,in=-90] (3,-2.5-0)  
   to [out=90,in=0] (1.5, -2.5+0.75) to [out=180,in= 90] (0,-2.5-0)
  ;

 \end{tikzpicture} \end{minipage}
 = \; \;
 \begin{minipage}{3.1cm}
 \begin{tikzpicture}  [scale=0.76]

 \draw(-0.25,0)--++(0:3.75) coordinate (X);

   \draw[thick](0 , 0.09) node {$\scriptstyle\down$};        
   \draw(2,0.09) node {$\scriptstyle\down$};
    \draw[thick](1 ,-0.09) node {$\scriptstyle\up$};
   \draw[thick](3 ,-0.09) node {$\scriptstyle\up$};

    \draw
    (0,0)  --++(-90:2.5)  --++( 90:2.5)
      to [out=90,in=180] (0.5,0.5)  to [out=0,in=90] (1,0)
        (1,0)
 to [out=-90,in=180] (1.5, -0.5) to [out=0,in= -90] (2,0)
  to [out=90,in=180] (2.5,0.5)  to [out=0,in=90] (3,0)  
 
   ;

\draw[gray!70,<->,thick](1.5,-0.6)-- (1.5,-2.5+0.6);

   \draw(-0.25,-2.5)--++(0:3.75) coordinate (X);

   \draw[thick](0 , 0.09-2.5) node {$\scriptstyle\down$};        
   \draw(2,0.09-2.5) node {$\scriptstyle\down$};

   \draw[thick](1 ,-0.09-2.5) node {$\scriptstyle\up$};
   \draw[thick](3 ,-0.09-2.5) node {$\scriptstyle\up$};

    \draw
    (0,-2.5)
      to [out=-90,in=180] (0.5,-2.5-0.5)  to [out=0,in=-90] (1,-2.5-0)
        (1,-2.5-0)
 to [out=90,in=180] (1.5, -2.5+0.5) to [out=0,in= 90] (2,-2.5-0)
  to [out=-90,in=180] (2.5,-2.5-0.5)  to [out=0,in=-90] (3,-2.5-0)--++(90:2.5)  
   ;

 \end{tikzpicture} \end{minipage}
 =\;
 \begin{minipage}{3.1cm}
 \begin{tikzpicture}  [scale=0.76]

 \draw(-0.5,0)--++(0:4) coordinate (X);

   \draw[thick](0 , 0.09) node {$\scriptstyle\down$};        
    \draw[thick](1 ,-0.09) node {$\scriptstyle\up$};

      \draw[thick](2 ,-0.09) node {$\scriptstyle\up$};  
   \draw(3,0.09) node {$\scriptstyle\down$};

    \draw
    (0,0)
      to [out=90,in=180] (0.5,0.5)  to [out=0,in=90] (1,0)
     to [out=-90,in=0] (0.5,-0.5)  to [out=180,in=-90] (0,0)
         
    ;

   \draw
    (2+0,0)
      to [out=90,in=180] (2+0.5,0.5)  to [out=0,in=90] (2+1,0)
     to [out=-90,in=0] (2+0.5,-0.5)  to [out=180,in=-90] (2+0,0)
         
    ;

 \end{tikzpicture} \end{minipage} 
 \;+\;
  \begin{minipage}{3.4cm}
 \begin{tikzpicture}  [scale=0.76]

 \draw(-0.5,0)--++(0:4) coordinate (X);

   \draw[thick](1 , 0.09) node {$\scriptstyle\down$};        
    \draw[thick](0 ,-0.09) node {$\scriptstyle\up$};

      \draw[thick](3 ,-0.09) node {$\scriptstyle\up$};  
   \draw(2,0.09) node {$\scriptstyle\down$};

    \draw
    (0,0)
      to [out=90,in=180] (0.5,0.5)  to [out=0,in=90] (1,0)
     to [out=-90,in=0] (0.5,-0.5)  to [out=180,in=-90] (0,0)
         
    ;

   \draw
    (2+0,0)
      to [out=90,in=180] (2+0.5,0.5)  to [out=0,in=90] (2+1,0)
     to [out=-90,in=0] (2+0.5,-0.5)  to [out=180,in=-90] (2+0,0)
         
    ;

 \end{tikzpicture} \end{minipage}  $$ 
 where we highlight with arrows the pair of arcs on which we are about to perform surgery.  The first equality follows from   the merging rule for $1\otimes 1 \mapsto 1$ and the second equality follows from  the splitting rule 
$ 1 \mapsto 1 \otimes x + x \otimes 1$. 
 \end{eg}

\begin{eg}\label{brace-surgery2}
We have the following product of Khovanov diagrams

$$   \begin{minipage}{3.1cm}
 \begin{tikzpicture}  [scale=0.76]

 \draw(-0.25,0)--++(0:3.75) coordinate (X);

   \draw[thick](0 , 0.09) node {$\scriptstyle\down$};        
   \draw(2,0.09) node {$\scriptstyle\down$};
    \draw[thick](1 ,-0.09) node {$\scriptstyle\up$};
   \draw[thick](3 ,-0.09) node {$\scriptstyle\up$};

    \draw
    (0,0)
      to [out=-90,in=180] (0.5,-0.5)  to [out=0,in=-90] (1,0)
        (1,0)
 to [out=90,in=180] (1.5, 0.5) to [out=0,in= 90] (2,0)
  to [out=-90,in=180] (2.5,-0.5)  to [out=0,in=-90] (3,0)  
   to [out=90,in=0] (1.5, 0.75) to [out=180,in= 90] (0,0)
  ;

\draw[gray!70,<->,thick](0.5,-0.8)-- (0.5,-2.5+0.8);

   \draw(-0.25,-2.5)--++(0:3.75) coordinate (X);

   \draw[thick](0 , 0.09-2.5) node {$\scriptstyle\down$};        
   \draw(2,0.09-2.5) node {$\scriptstyle\down$};

   \draw[thick](1 ,-0.09-2.5) node {$\scriptstyle\up$};
   \draw[thick](3 ,-0.09-2.5) node {$\scriptstyle\up$};

    \draw
    (0,-2.5)
      to [out=90,in=180] (0.5,-2.5+0.5)  to [out=0,in=90] (1,-2.5-0)
        (1,-2.5-0)
 to [out=-90,in=180] (1.5, -2.5-0.5) to [out=0,in= -90] (2,-2.5-0)
  to [out=90,in=180] (2.5,-2.5+0.5)  to [out=0,in=90] (3,-2.5-0)  
   to [out=-90,in=0] (1.5, -2.5-0.75) to [out=180,in= -90] (0,-2.5-0)
  ;

 \end{tikzpicture} \end{minipage}
 = \; \;
%
%
%
%
%
%
%
%
%
%
%
%
%
%
%
%
%
%
%
%
%
%
%
%
%
%
%
%
%
%
%
%
%
%
%
%
 \begin{minipage}{3.1cm}
 \begin{tikzpicture}  [scale=0.76]

 \draw(-0.25,0)--++(0:3.75) coordinate (X);

   \draw[thick](0 , 0.09) node {$\scriptstyle\down$};        
   \draw(2,0.09) node {$\scriptstyle\down$};
    \draw[thick](1 ,-0.09) node {$\scriptstyle\up$};
   \draw[thick](3 ,-0.09) node {$\scriptstyle\up$};

    \draw
 (1,-2)--(1,0)
        (1,0)
 to [out=90,in=180] (1.5, 0.5) to [out=0,in= 90] (2,0)
  to [out=-90,in=180] (2.5,-0.5)  to [out=0,in=-90] (3,0)  
   to [out=90,in=0] (1.5, 0.75) to [out=180,in= 90] (0,0)
  ;

\draw[gray!70,<->,thick](2.5,-0.8)-- (2.5,-2.5+0.8);

   \draw(-0.25,-2.5)--++(0:3.75) coordinate (X);

   \draw[thick](0 , 0.09-2.5) node {$\scriptstyle\down$};        
   \draw(2,0.09-2.5) node {$\scriptstyle\down$};

   \draw[thick](1 ,-0.09-2.5) node {$\scriptstyle\up$};
   \draw[thick](3 ,-0.09-2.5) node {$\scriptstyle\up$};

    \draw
    (0,0)--(0,-2.5);
   
    \draw
    (1,0)--
        (1,-2.5-0)
 to [out=-90,in=180] (1.5, -2.5-0.5) to [out=0,in= -90] (2,-2.5-0)
  to [out=90,in=180] (2.5,-2.5+0.5)  to [out=0,in=90] (3,-2.5-0)  
   to [out=-90,in=0] (1.5, -2.5-0.75) to [out=180,in= -90] (0,-2.5-0)
  ;

 \end{tikzpicture} \end{minipage}
  =\;
 \begin{minipage}{3.1cm}
 \begin{tikzpicture}  [scale=0.76]

 \draw(-0.5,0)--++(0:4) coordinate (X);

   \draw[thick](1 , 0.09) node {$\scriptstyle\down$};        
    \draw[thick](0 ,-0.09) node {$\scriptstyle\up$};

      \draw[thick](2 ,-0.09) node {$\scriptstyle\up$};  
   \draw(3,0.09) node {$\scriptstyle\down$};

    \draw
    (0+1,0)
      to [out=90,in=180] (0.5+1,0.5)  to [out=0,in=90] (1+1,0)
     to [out=-90,in=0] (0.5+1,-0.5)  to [out=180,in=-90] (0+1,0)
         
    ;

   \draw
    (0 ,0)
      to [out=90,in=180] (0.5+1,0.75)  to [out=0,in=90] (3,0)
     to [out=-90,in=0] (0.5+1,-0.75)  to [out=180,in=-90] (0,0)
         
    ;

%

 \end{tikzpicture} \end{minipage} 
 \;+\;
  \begin{minipage}{3.4cm}
 \begin{tikzpicture}  [scale=0.76]

 \draw(-0.5,0)--++(0:4) coordinate (X);

   \draw[thick](0 , 0.09) node {$\scriptstyle\down$};        
    \draw[thick](1 ,-0.09) node {$\scriptstyle\up$};

      \draw[thick](3 ,-0.09) node {$\scriptstyle\up$};  
   \draw(2,0.09) node {$\scriptstyle\down$};

  \draw
    (0+1,0)
      to [out=90,in=180] (0.5+1,0.5)  to [out=0,in=90] (1+1,0)
     to [out=-90,in=0] (0.5+1,-0.5)  to [out=180,in=-90] (0+1,0)
         
    ;

   \draw
    (0 ,0)
      to [out=90,in=180] (0.5+1,0.75)  to [out=0,in=90] (3,0)
     to [out=-90,in=0] (0.5+1,-0.75)  to [out=180,in=-90] (0,0)
         
    ;
   
%
%
%
%
%
%
%

 \end{tikzpicture} \end{minipage}  $$ 
 where we highlight with arrows the pair of arcs on which we are about to perform surgery. 
This is similar to \cref{brace-surgery}. 
 \end{eg}

\begin{prop}[{\cite[Section 3]{MR2918294}}] The map $^* : K^m_n \rightarrow K^m_n$ defined by $\underline{\la} \mu \overline{\nu} \mapsto (\underline{\la} \mu \overline{\nu})^* =  \underline{\nu} \mu \overline{\la}$ is an algebra anti-automorphism, giving a duality functor
$$^{\ostar} \, : \, \Kmod \rightarrow \Kmod.$$
\end{prop}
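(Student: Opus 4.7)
The plan is to verify four things: well-definedness of $^*$ on basis elements, $\Bbbk$-linearity, the involution property (which gives bijectivity for free), and the anti-multiplicativity $(ab)^* = b^*a^*$. Only the last point is substantive; the duality functor $^{\ostar}$ is then the standard construction, sending $M \in \Kmod$ to $M^{\ostar} = \Hom_\Bbbk(M,\Bbbk)$ with action $(a \cdot \psi)(m) = \psi(a^* m)$, and acting on morphisms by precomposition with the transpose.

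For well-definedness, I would observe that $\underline{\la}\mu\overline{\nu}$ is a basis element exactly when $\underline{\la}\mu$ and $\mu\overline{\nu}$ are both oriented, and this happens if and only if $\underline{\nu}\mu$ and $\mu\overline{\la}$ are oriented (the definition of oriented for $\underline{\mu}\la$ and for $\la\overline{\mu}$ are equivalent). Hence $\underline{\nu}\mu\overline{\la}$ is again a basis element. Extending $\Bbbk$-linearly gives a well-defined linear map. The involution property is geometrically transparent: $^*$ is vertical reflection of diagrams, which squares to the identity, so $^*$ is a $\Bbbk$-linear bijection.

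The essential content is the anti-multiplicativity. Given basis elements $a = \underline{\la}\mu\overline{\nu}$ and $b = \underline{\alpha}\beta\overline{\gamma}$, both $ab$ and $b^*a^* = (\underline{\gamma}\beta\overline{\alpha})(\underline{\nu}\mu\overline{\la})$ vanish unless $\nu = \alpha$, so assume $\nu = \alpha$. The product $ab$ is computed by stacking $a$ below $b$ and performing the iterated surgery procedure along the arcs of $\overline{\nu}\underline{\nu}$ in the middle. Vertical reflection $\phi$ of the entire stacked diagram sends this configuration precisely to the stacking of $b^*$ below $a^*$, with the middle surgery region the same. It remains to verify that each individual surgery move commutes with $\phi$ in the sense that $\phi$ applied to the output of a move equals the output of the corresponding move applied to $\phi$ of the input. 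For this, note that the three types of connected components appearing (anti-clockwise circle $1$, clockwise circle $x$, propagating oriented strand $y$) are preserved under $\phi$: the $\up/\down$ labels at the weight lines do not flip (only cups and caps swap), so the orientation type of each circle is unchanged. With this preservation, the splitting rules ($1 \mapsto 1\otimes x + x \otimes 1$, $x \mapsto x \otimes x$, $y \mapsto x \otimes y$) and the merging rules are each symmetric under $\phi$ by inspection. Iterating over the surgery steps yields $\phi(ab) = b^* a^*$, i.e.\ $(ab)^* = b^* a^*$.

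The main obstacle is the bookkeeping in the last step: one must check that swapping cups and caps genuinely preserves (rather than swaps) the clockwise/anti-clockwise labelling of every circle arising during surgery. The subtlety is that one might instinctively expect reflection to reverse orientations, but because the $\up/\down$ symbols on the central weight line are left fixed, the orientation type of every circle in every intermediate surgery state is preserved by $\phi$; once this is granted, the invariance of each splitting/merging rule is a short case check. I would write this out as a single lemma (``$\phi$ commutes with the surgery procedure'') and then the anti-automorphism property follows immediately.
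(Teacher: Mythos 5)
Your proof is correct, and it takes the standard route: the paper itself offers no proof, citing Brundan--Stroppel's \cite[Section 3]{MR2918294}, and the argument you give is essentially the one used there. The one genuinely substantive point is exactly the one you isolate: that vertical reflection preserves rather than swaps the clockwise/anti-clockwise type of every arc (and hence of every circle) appearing in the stacked diagrams, because the criterion for an arc to be anti-clockwise --- left endpoint $\down$, right endpoint $\up$ --- is the same for cups as for caps, and reflection fixes endpoints and labels while only exchanging cup with cap. Once this is in hand, each splitting and merging rule is $\phi$-equivariant by inspection (the $y\otimes y$ rule included, since a propagating strand's $\up$/$\down$ type is likewise read off fixed labels), and the commutativity of $\phi$ with the full iterated surgery follows. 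Your treatment of well-definedness, linearity, involutivity, and the resulting duality $M\mapsto\Hom_\Bbbk(M,\Bbbk)$ with $(a\psi)(m)=\psi(a^*m)$ is also correct.
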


 \begin{rmk}\label{rmkmn1} There is also another algebra anti-isomorphism $\curvearrowleft \, : \, K^m_n \rightarrow K^n_m$ defined by rotating each diagram $\underline{\la}\mu\overline{\nu}$ by 180 degrees. Composing this map with the algebra anti-automorphism $^*$ gives an isomorphism between $K^m_n$ and $K^n_m$.
 \end{rmk}

\subsection{Standard modules}

Brundan and Stroppel showed in \cite[Section 5]{MR2918294} that $K_n^m$ is a quasi-hereditary algebra with respect  to $(\Lambda_{m,n}, \leq )$. For each $\la \in \Lambda_{m,n}$ we denote by $L(\la) = L_{m,n}(\la)$ the corresponding simple module, by $P(\la) = P_{m,n}(\la)$ its projective cover and by $\Delta(\la) = \Delta_{m,n}(\la)$ the corresponding standard module. So for any $\la ,\mu\in \Lambda_{m,n}$ we have 
\begin{itemize}
\item $[\Delta(\la) : L(\mu)] \neq 0 \Rightarrow \mu \leq \la$, and 
\item $ [\Delta(\la) : L(\la)] =1.$
\end{itemize}
Moreover, Brundan and Stroppel showed that $L(\la)^{\ostar} = L(\la)$ for all $\la \in \Lambda_{m,n}$.
They also describe explicitly the composition factor multiplicities for standard modules in terms of  Kazhdan--Lusztig   polynomials.

\begin{thm}[{\cite[Theorem 5.2]{MR2918294}}] \label{decnumbers}
For all $\la, \mu \in \Lambda_{m,n}$ we have 
$$[\Delta(\la): L(\mu)] = n_{\la \mu}(1) = \left\{ \begin{array}{ll} 1 & \mbox{if $\underline{\mu}\la$ is oriented} \\ 0 & \mbox{otherwise} \end{array}\right.$$
\end{thm}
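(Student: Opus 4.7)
The plan is to exploit the explicit diagrammatic basis of $K^m_n$ to build an explicit basis of $\Delta(\la)$, then read off composition multiplicities via the idempotent decomposition.

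First, I would establish a basis of $\Delta(\la)$. For each weight $\nu$, let $e_\nu = \underline{\nu}\nu\overline{\nu}$; these are pairwise orthogonal primitive idempotents summing to $1$, and the basis $\{\underline{\alpha}\beta\overline{\gamma}\}$ of $K^m_n$ gives a decomposition $K^m_n = \bigoplus_{\mu,\nu} e_\mu K^m_n e_\nu$ with $e_\mu K^m_n e_\nu$ having basis indexed by diagrams with middle weight ranging over $\{\beta : \underline{\mu}\beta, \beta\overline{\nu} \text{ oriented}\}$. The projective cover $P(\la)$ is (up to grading shift) $K^m_n e_\la$, and by analyzing how the surgery multiplication works, $\Delta(\la)$ is cut out as the quotient by the submodule spanned by those basis diagrams whose bottom cup diagram $\underline{\alpha}$ is strictly $>\la$. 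The resulting basis of $\Delta(\la)$ is naturally indexed by
\[
\{\mu \in \Lambda_{m,n} \mid \underline{\la}\mu \text{ is oriented}\},
\]
with the basis element $v_\mu$ sitting in the weight space $e_\mu \Delta(\la)$. In particular, $\dim e_\mu \Delta(\la) = 1$ if $\underline{\la}\mu$ is oriented and $0$ otherwise.

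Next, I would show that the simple modules are distinguished by their weight spaces, namely
\[
\dim e_\mu L(\nu) = \delta_{\mu\nu}.
\]
The $\leq$-minimal basis vector $v_\la \in \Delta(\la)$ lies in $e_\la \Delta(\la)$ and generates the head $L(\la)$, so $e_\la L(\la) \neq 0$. Conversely, combined with the standing triangularity $[\Delta(\la):L(\mu)] \neq 0 \Rightarrow \mu \leq \la$ together with $[\Delta(\la):L(\la)]=1$, one sees by an induction on the partial order that $L(\nu)$ has a one-dimensional $e_\nu$-weight space and no basis vector in any higher weight space $e_\mu$ with $\mu \not\leq \nu$. Combined with the surgery rule (which only ever pushes weights downwards or keeps them the same) this yields $\dim e_\mu L(\nu) = \delta_{\mu\nu}$. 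Given this, taking $e_\mu$-weight spaces of a composition series of $\Delta(\la)$ and comparing dimensions gives
\[
[\Delta(\la):L(\mu)] = \dim e_\mu \Delta(\la).
\]

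Finally, I would observe that the condition ``$\underline{\la}\mu$ is oriented'' is symmetric in $\la$ and $\mu$: flipping the diagram through its horizontal axis (and swapping $\up \leftrightarrow \down$ on the caps so that they become cups labelled by the transposed weight) shows that $\underline{\la}\mu$ is oriented if and only if $\underline{\mu}\la$ is oriented. Combining everything,
\[
[\Delta(\la):L(\mu)] = \dim e_\mu \Delta(\la) = [\underline{\la}\mu \text{ oriented}] = [\underline{\mu}\la \text{ oriented}] = n_{\la\mu}(1).
\]
I expect the main obstacle to be the second step, namely establishing the orthogonality $\dim e_\mu L(\nu) = \delta_{\mu\nu}$; this relies on a careful analysis of the surgery rules to control how idempotents detect simple modules, and is the point at which the specific features of the Khovanov arc algebra (rather than just general cellular/quasi-hereditary formalism) must be used.
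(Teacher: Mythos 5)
The paper does not prove this statement internally --- it is cited directly from Brundan--Stroppel~\cite{MR2918294} --- so there is no in-text proof to compare against. Your overall strategy (exhibit the explicit diagrammatic basis of the cell/standard module, show that each simple $L(\nu)$ is one-dimensional and supported on the single weight space $e_\nu$, then read off decomposition numbers by comparing dimensions of $e_\mu$-weight spaces) is precisely how the argument goes in Brundan--Stroppel, so the route is the right one.

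However, there is a genuine error in step 1 that propagates. The basis of $\Delta(\la)$ coming from the cellular structure is indexed by $\{\mu : \underline{\mu}\la \text{ is oriented}\}$, \emph{not} by $\{\mu : \underline{\la}\mu \text{ is oriented}\}$: the vector lying in $e_\mu\Delta(\la)$ is the diagram with cup diagram $\underline{\mu}$ at the bottom and weight $\la$ in the middle, i.e.\ $(\underline{\mu}\la|$. With the correct indexing, step 3 gives $[\Delta(\la):L(\mu)]=[\underline{\mu}\la\text{ oriented}]$ directly, and you are done --- step 4 is unnecessary.

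Because of the index swap, you are forced to assert that ``$\underline{\la}\mu$ oriented $\iff$ $\underline{\mu}\la$ oriented,'' and this is \emph{false}. Take $m=n=1$, $\la=(1)=\down\up$, $\mu=\varnothing=\up\down$. Then $\underline{(1)}$ is a single cup and $\underline{\varnothing}$ is a pair of rays. The diagram $\underline{(1)}\varnothing$ is oriented (the cup is labelled by one $\up$ and one $\down$), but $\underline{\varnothing}(1)$ is \emph{not}: its two rays carry the labels $\down$, $\up$ from left to right, which is precisely what condition (ii) forbids. Indeed the relation cannot be symmetric, since $\underline{\mu}\la$ oriented must force $\mu\leq\la$ to be compatible with the triangularity $[\Delta(\la):L(\mu)]\neq 0\Rightarrow\mu\leq\la$. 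The flip-through-the-horizontal-axis you invoke takes $\underline{\la}\mu$ to $\mu\overline{\la}$ (which is oriented iff $\underline{\la}\mu$ is, by definition) and does not produce $\underline{\mu}\la$; reversing arrows does not commute with the assignment $\la\mapsto\underline{\la}$.

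One further caution: your step 2, $\dim e_\mu L(\nu)=\delta_{\mu\nu}$, is true but the induction you sketch is looser than it looks; for instance, it does not cleanly rule out $\dim e_\mu L(\nu)\neq 0$ with $\mu<\nu$. Brundan--Stroppel's argument uses the positive grading on $K^m_n$: the cell module $V(\nu)$ has a unique degree-zero basis vector $(\underline{\nu}\nu|$, whose pairing with itself yields the nonzero idempotent $e_\nu$, so the degree-zero part survives in the head; since the radical lives in positive degree, $L(\nu)$ is exactly the one-dimensional degree-zero piece. That is the clean way to close this step.
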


\begin{rmk}
In fact, Brundan and Stroppel showed that $K^m_n$ is also positively graded and that the actual  Kazhdan--Lusztig   polynomials $n_{\la \mu}(q)$ describe the graded decomposition numbers. We will not consider the grading explicitly  in this paper as it does not play a significant role and  as it is notationally cumbersome (but one can incorporate this in a standard fashion). 
\end{rmk}

Moreover, they obtained the following structural result on standard modules.

\begin{thm}[{\cite[Theorem 6.6 and Corollary 6.7]{MR2600694}}] \label{soclestandard} 
For each $\la \in \Lambda_{m,n}$, the standard module $\Delta(\la)$ is rigid, i.e.~its radical and socle filtration coincide and for each $k\geq 0$ we have 
$$\rad_k \Delta(\la)
= \bigoplus_{\mu} L(\mu)$$
where the sum is over all $\mu \in \Lambda_{m,n}$ with $\underline{\mu}\la$ oriented of degree $k$.
Furthermore, for each $\la \in \Lambda_{m,n}$, we have 
 $\soc \Delta(\la) = L(\la^\circ)$ 
where $\la^{\circ}$ is defined in \cref{lambdacirc}.
\end{thm}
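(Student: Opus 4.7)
The plan is to exploit the fact, established by Brundan and Stroppel, that $K^m_n$ is positively graded (in fact Koszul), with semisimple degree-$0$ part, and that the graded refinement of \cref{decnumbers} reads $[\Delta(\la):L(\mu)]_q = n_{\la\mu}(q)$. The crucial point is that $n_{\la\mu}(q)$ is either $0$ or a single monomial $q^{\deg(\underline{\mu}\la)}$, so each composition factor $L(\mu)$ of $\Delta(\la)$ appears in exactly one graded piece of $\Delta(\la)$, at the degree dictated by the cup-diagram combinatorics.

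Granting the grading, I would first deduce rigidity. The module $\Delta(\la)$ is cyclic, generated by a vector spanning $L(\la)$ in degree $0$, and it is concentrated in non-negative degrees. Since $K^m_n$ acts through a positively graded algebra whose degree-zero component is semisimple, the canonical grading filtration is both a radical filtration (its quotients are of pure degree, hence semisimple) and a socle filtration (duall its subs are semisimple), and both must coincide with the true radical and socle filtrations of $\Delta(\la)$. Combining this with the monomial form of $n_{\la\mu}(q)$ immediately yields the formula $\rad_k \Delta(\la) = \bigoplus_\mu L(\mu)$ with $\mu$ running over oriented $\underline{\mu}\la$ of degree $k$.

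For the second statement, I need to identify the top (equivalently, largest-degree) radical layer. By \cref{lambdacirc}, the cup diagram $\underline{\la^\circ}$ is built so that stage (1) places cups in the unique positions where $\la$ already presents a neighbouring $\up\down$-pair orientable clockwise, while stage (2) places nested anticlockwise cups on the remaining alternating pattern of $\down$s and $\up$s in $\la$. Two things then need to be checked: (i) $\underline{\la^\circ}\la$ is oriented, and (ii) $\la^\circ$ is the \emph{unique} weight $\mu$ for which $\underline{\mu}\la$ is oriented of maximal degree. For (i), the stage-(1) cups are clockwise and the stage-(2) cups are anticlockwise by the construction of the leftover pattern; no two through-strands can give a $\down\up$ ray-pair since these have been exhausted in stage (1). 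For (ii), any competing oriented $\underline{\mu}\la$ must either omit some cup of $\underline{\la^\circ}$ from stage (1) — losing a clockwise orientation — or contain a cup not present in $\underline{\la^\circ}$ which must span a region where the symbols of $\la$ force it to be anticlockwise; either way the degree drops strictly.

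The main obstacle I anticipate is the uniqueness step (ii); the cleanest route is induction on the number of cups, peeling off an innermost cup of $\underline{\mu}$ and reducing to a smaller instance, using the greedy nature of the construction of $\underline{\la^\circ}$ to control the comparison. This induction simultaneously verifies that stage (2) always produces a valid nested anticlockwise matching, so that $\la^\circ$ is well-defined. Once these combinatorial facts are in place, rigidity combined with the monomial form of $n_{\la\mu}(q)$ pins down $\soc\Delta(\la)=L(\la^\circ)$ as a single simple, completing the proof.
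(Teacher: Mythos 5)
Your argument has the right raw ingredients (positive grading, semisimple degree-zero part, Koszul generation in degree one, and the monomial form of $n_{\la\mu}(q)$), but the logical order is reversed and this creates a genuine gap at the rigidity step. You claim that the grading filtration ``is both a radical filtration \ldots and a socle filtration \ldots and both must coincide with the true radical and socle filtrations.'' Having semisimple graded layers only tells you that the grading filtration is a Loewy filtration, and every Loewy filtration is merely \emph{sandwiched} between the radical filtration (fastest descending) and the socle filtration (fastest ascending); it coincides with both precisely when the module is already known to be rigid. What \emph{is} immediate is that $\rad^i\Delta(\la)=\Delta(\la)_{\geq i}$, since $\Delta(\la)$ is generated in degree $0$ and $K^m_n$ is generated over its semisimple degree-zero part by degree one. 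But the corresponding statement for the socle filtration requires as an input that $\soc\Delta(\la)$ is concentrated in the single top degree $d$; only then does the degree-one-generation argument force $\soc^i\Delta(\la)=\Delta(\la)_{\geq d-i+1}$. The monomial property of $n_{\la\mu}(q)$ rules out a given $L(\mu)$ appearing in two degrees, but it does \emph{not} prevent two distinct simples from sitting in the same graded degree, nor does it prevent a simple submodule in a lower degree; so it does not by itself pin the socle to the top layer.

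The missing idea, which is exactly the pivot of Brundan and Stroppel's argument, is to establish the simple-socle statement \emph{first} and independently of rigidity: one shows that $\Delta(\la)$ embeds into the projective-injective module $P(\la^\circ)$, whose socle is the simple $L(\la^\circ)$ (this is the content of the result quoted as \cref{prinj} together with the combinatorial description of $\la^\circ$). A submodule of a module with simple socle has simple socle, so $\soc\Delta(\la)\cong L(\la^\circ)$, concentrated in the top graded degree. Rigidity is then a \emph{consequence}: generated in degree $0$, cogenerated in degree $d$, algebra generated in degree $1$ over semisimple degree $0$ forces both Loewy filtrations to equal the grading filtration, and the radical-layer formula follows from the monomial decomposition numbers. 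Your step (ii) --- uniqueness of the maximal-degree $\mu$ with $\underline{\mu}\la$ oriented --- is true and relevant, but by itself it only identifies the top graded piece, not the socle; you would still need the embedding argument (or an equivalent) to exclude simple submodules in intermediate degrees, and you would need it \emph{before} appealing to rigidity rather than after.
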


We will make use of the following special cases.

\begin{cor}\label{uniserial}
Assume $n\geq m$. Then we have 
that $\Delta_{m,n}(\varnothing)$ is uniserial of length $m+1$ with
\begin{align}\label{uniserial1}
\rad_t \Delta_{m,n}(\varnothing) = L(t^t)   
\intertext{for $0 \leq t \leq m$ 
and 
  $\Delta_{m,n}(m^{n-m})$ is uniserial of length $m+1$ with}
 \label{uniserial2}
 \rad_t \Delta_{m,n}(m^{n-m}) = L(m^{n-m},t^t)  
 \end{align}
for $ 0 \leq t \leq m$. 
\end{cor}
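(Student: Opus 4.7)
The plan is to apply \cref{soclestandard}, which identifies $\rad_t \Delta(\la)$ with $\bigoplus_\mu L(\mu)$ summed over all $\mu \in \Lambda_{m,n}$ such that $\underline{\mu}\la$ is oriented of degree $t$. So it suffices to show that for each $t$ in the claimed range there is exactly one such $\mu$, equal to the stated partition.

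For $\la = \varnothing = \up^m\down^n$: the ray condition (ii) in the orientation definition is automatic, since every $\up$-labelled position of $\la$ lies strictly to the left of every $\down$-labelled one. Condition (i) forces every cup of $\underline{\mu}$ to join the $\up$-block $\{1,\dots,m\}$ to the $\down$-block $\{m+1,\dots,m+n\}$. The greedy rule by which the cup diagram is built forces the innermost cup to have empty interior (any interior position would give an inner cup with both endpoints $\up$ or both $\down$ in $\la$, which cannot be oriented), so the cups are uniquely the nested arcs $[m-s+1,\,m+s]$ for $s=1,\dots,t$. The cup/ray labelling convention then determines $\mu = \up^{m-t}\down^t\up^t\down^{n-t}$, which one checks is the weight of $(t^t)$. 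Every cup has $\up$ on the left and $\down$ on the right in $\la$, so all $t$ cups are clockwise, giving degree $t$.

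For $\la = (m^{n-m}) = \down^{n-m}\up^m\down^m$, partition the positions into $D_1 = \{1,\dots,n-m\}$, $U = \{n-m+1,\dots,n\}$, $D_2 = \{n+1,\dots,n+m\}$. Condition (i) forces each cup of $\underline{\mu}$ to join $U$ to $D_1 \cup D_2$. Condition (ii) forbids a ray in $D_1$ coexisting with a ray in $U$, so either (A) every position of $D_1$ is a cup endpoint, or (B) every position of $U$ is a cup endpoint. In each case the non-crossing constraint and the greedy innermost-cup rule (exactly as in the $\varnothing$ case, ruling out any cup with interior contained in a single one of $D_1,U,D_2$) force the cups to nest around the middle of each block: $D_1$-to-$U$ cups take the form $[n-m-k+1,\,n-m+k]$, and $U$-to-$D_2$ cups the form $[n-s+1,\,n+s]$. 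The resulting $\mu$ is uniquely $\down^{n-m}\up^{m-t}\down^t\up^t\down^{m-t}$, where $t$ is the number of $U$-to-$D_2$ cups, and this is the weight of $(m^{n-m}, t^t)$. The clockwise cups are exactly the $U$-to-$D_2$ ones, so the degree is $t$. A book-keeping check verifies that cases A and B together produce each $t \in \{0,\dots,m\}$ exactly once: they overlap precisely at $t = 2m-n$ (when $n \leq 2m$) and case B alone suffices when $n>2m$. Combined with \cref{soclestandard}, this gives the claim; the main obstacle is the case analysis for the second partition, where one must carefully handle the non-crossing nesting on both sides of $U$.
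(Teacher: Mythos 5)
Your proof is correct and follows the same route as the paper: apply \cref{soclestandard} to the explicitly given weights $\varnothing=\up^m\down^n$ and $(m^{n-m})=\down^{n-m}\up^m\down^m$, then work out the oriented diagrams $\underline{\mu}\la$ by degree. The paper states this as a one-line consequence of \cref{soclestandard}; you have simply spelled out the cup-diagram combinatorics in full, including the mild case analysis at the second weight.
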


\begin{proof}
This follows directly from \cref{soclestandard} as $\varnothing = \up \ldots \up \down \ldots \down$ ($m$ $\up$'s followed by $n$ $\down$'s) and $(m^{n-m}) = \down \ldots \down \up \ldots \up \down \ldots \down$ ($n-m$ $\down$'s followed by $m$ $\up$'s followed by $m$ $\down$'s). 
\end{proof}

Brundan--Stroppel also gave explicit projective resolutions for standard modules.

\begin{thm}[{\cite[Theorem  5.3]{MR2918294}}]

\label{projresol}  For any $\la \in \Lambda_{m,n}$, we have an exact sequence 
$$\cdots \rightarrow P_2 \rightarrow P_1 \rightarrow P_0 \rightarrow \Delta_{m,n}(\la) \rightarrow 0$$
where 
$$P_k = \bigoplus_{\mu\in \Lambda_{m,n}} p_{\la \mu}^{(k)} P_{m,n}(\mu)$$
and $p_{\la\mu}(q) = \sum_k p_{\la \mu}^{(k)}q^k$ are the inverse  Kazhdan--Lusztig   polynomials defined in \cref{SectioninverseKL}. 
\end{thm}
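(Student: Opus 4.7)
I would prove this by induction on $m+n$ combined with a downward induction on $\la$ with respect to the partial order on $\Lambda_{m,n}$.

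The base case is $\la = \varnothing$, the maximum of $\Lambda_{m,n}$. Then $p_{\la\mu}(q) = \delta_{\la\mu}$ and $\Delta(\varnothing) = P(\varnothing)$, since all $\Delta$-sections of $P(\varnothing)$ must have weight $\mu \geq \varnothing$ by quasi-heredity, forcing $\mu = \varnothing$. The proposed resolution thus reduces to the trivial one.

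For the inductive step, suppose $\la$ is non-maximal and fix $i$ so that $\la = \la^+ \in \Lambda^{\down\up}_{m,n}(i)$. The crucial ingredient is a projective functor
\[ G_i \colon K^{m-1}_{n-1}\text{-mod} \longrightarrow K^{m}_{n}\text{-mod}, \]
from the family of projective functors relating $K^m_n$ and $K^{m-1}_{n-1}$ which the introduction announces will be set up in Section~4. This functor should ``insert'' a $\down\up$-pair at positions $i,i+1$, and I would need the following properties: $G_i$ is exact; $G_i P_{m-1,n-1}(\mu') \cong P_{m,n}(\mu^+)$ on indecomposable projectives; and $G_i$ fits into a natural short exact sequence
\[ 0 \longrightarrow \Delta_{m,n}(\la^-) \longrightarrow G_i \Delta_{m-1,n-1}(\la') \longrightarrow \Delta_{m,n}(\la^+) \longrightarrow 0. \]
Granting these, the outer induction on $m+n$ yields a projective resolution $P'_\bullet \to \Delta_{m-1,n-1}(\la')$ with $P'_k = \bigoplus_{\mu'} p_{\la'\mu'}^{(k)}\, P_{m-1,n-1}(\mu')$; applying the exact functor $G_i$ gives a projective resolution of $G_i \Delta_{m-1,n-1}(\la')$. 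Since $\la^- > \la$, the inner (downward) induction provides a projective resolution $Q_\bullet \to \Delta_{m,n}(\la^-)$ with the asserted multiplicities. Taking the mapping cone of a lift $Q_\bullet \to G_i P'_\bullet$ of the inclusion $\Delta_{m,n}(\la^-) \hookrightarrow G_i \Delta_{m-1,n-1}(\la')$ produces a projective resolution of the cokernel $\Delta_{m,n}(\la)$ with $P_k \cong G_i P'_k \oplus Q_{k-1}$. Reading off multiplicities, the coefficient of $P_{m,n}(\nu)$ in $P_k$ is $p_{\la'\nu'}^{(k)} + p_{\la^-\nu}^{(k-1)}$ when $\nu \in \Lambda^{\down\up}_{m,n}(i)$, and $p_{\la^-\nu}^{(k-1)}$ otherwise: by the recursive definition of the polynomials $p_{\la\nu}(q)$ in \cref{SectioninverseKL}, this is exactly $p_{\la\nu}^{(k)}$.

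The main obstacle is constructing $G_i$ with these precise properties, particularly the short exact sequence on standards, which requires a careful analysis of the bimodule defining $G_i$ using the diagrammatic surgery rules illustrated in \cref{brace-surgery} and \cref{brace-surgery2}. A secondary issue is minimality of the resolution: one must check that the mapping cone construction does not introduce cancellations which would reduce the effective multiplicities. This can be verified by showing that the chosen lifts $Q_k \to G_i P'_k$ have image in the radical at every level --- in degree zero this uses that $\la^- \neq \la^+$, so any homomorphism $P_{m,n}(\la^-) \to P_{m,n}(\la^+)$ lands in the radical --- and then propagates upward inductively on the chain level.
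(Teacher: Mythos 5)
This theorem is cited from Brundan--Stroppel \cite[Theorem 5.3]{MR2918294}; the paper gives no proof, so there is nothing in-paper to compare against. Your proposal is a correct reconstruction: the three properties of the projective functor $G^{t_i}$ you identify as the crux (exactness, the effect on indecomposable projectives, and the short exact sequence on standard modules) are exactly what the paper records in \cref{exactadjoint} and \cref{Gsimplestandard}, so from the paper's vantage point these are available inputs rather than obstacles to be overcome, and the recursive definition of $p_{\la\mu}$ in \cref{SectioninverseKL} mirrors your mapping-cone bookkeeping term for term. One observation: the minimality concern in your final paragraph is superfluous for the theorem as stated, which asserts only the \emph{existence} of an exact sequence with the given projective terms. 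The mapping cone supplies this directly --- its $k$-th term is $G^{t_i}P'_k\oplus Q_{k-1}$ by construction, matching the required multiplicities on the nose, and exactness of the cone holds irrespective of whether the differential is minimal, so there is no cancellation to rule out. Minimality is a genuinely stronger claim (it follows from the Koszulity of $K^m_n$, which is the organizing framework of \cite{MR2918294}), but it is neither asserted nor used anywhere in this paper.
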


\subsection{Projective functors}

One of the key ingredients used by Brundan--Stroppel throughout \cite{MR2600694,MR2781018,MR2955190,MR2881300} is their so-called projective functors. These are given by tensoring by certain bimodules defined using \lq crossingless matchings', which generalise the geometric bimodules defined by Khovanov \cite{MR1928174}. We will only need very special cases of these functors, corresponding to the matchings given by the following diagrams
$$
  \begin{tikzpicture} [scale=0.8,yscale=-1]

		\path (4,1) coordinate (origin); 
 		\path (origin)--++(0.5,0.5) coordinate (origin2);  
	 	\draw(origin2)--++(0:6.5) coordinate(X);
			 	\draw (origin2)--++(180:1.5)coordinate (Y); 	\draw[densely dotted](Y)--++(180:0.3); 
							 	\draw[densely dotted](X)--++(0:0.3); 
		\foreach \i in {-2,-1,0,1,2,3,4,5,...,12}
		{
			\path (origin2)--++(0:0.5*\i) coordinate (a\i); 
			\path (origin2)--++(0:0.5*\i)--++(-90:0.00) coordinate (c\i); 
			  }
		
		\foreach \i in {0,1,2,3,4,5,...,19}
		{
			\path (origin2)--++(0:0.25*\i) --++(-90:0.5) coordinate (b\i); 
			\path (origin2)--++(0:0.25*\i) --++(-90:0.9) coordinate (d\i); 
		}

	\path (4,-1) coordinate (origin); 
 		\path (origin)--++(0.5,0.5) coordinate (origin2);  
	 	\draw(origin2)--++(0:5.5) coordinate(X);
			 	\draw (origin2)--++(180:0.5)coordinate (Y); 	\draw[densely dotted](Y)--++(180:0.3); 
							 	\draw[densely dotted](X)--++(0:0.3); 
		\foreach \i in {0,1,2,3,4,5,...,10}
		{
			\path (origin2)--++(0:0.5*\i) coordinate (Da\i); 
			\path (origin2)--++(0:0.5*\i)--++(-90:0.00) coordinate (Dc\i); 
			  }
		
		\foreach \i in {1,2,3,4,5,...,19}
		{
			\path (origin2)--++(0:0.25*\i) --++(-90:0.5) coordinate (Db\i); 
			\path (origin2)--++(0:0.25*\i) --++(-90:0.9) coordinate (Dd\i); 
		}
		
				\path(Dc4) node [above] {\scalefont{0.7}$\phantom{{+}1}\phantom{{+}1}$};




		\draw[    thick](c8) to [out=-90, in=90] (Dc6);
		\draw[    thick](c10) to [out=-90, in=90] (Dc8);
		\draw[    thick](c12) to [out=-90, in=90] (Dc10);

		\draw[    thick](c-2) to [out=-90, in=90] (Dc0);

		\draw[    thick](c0) to [out=-90, in=90] (Dc2);
		\draw[    thick](c2) to [out=-90, in=90] (Dc4);
		\draw[    thick](c6) to [out=-90,in=0] (b10) to [out=180,in=-90] (c4); 
		\draw(c6) node [below] {\scalefont{0.7}$i{+}1$};
		\draw(c4) node [below] {\scalefont{0.7}$\phantom{{+}1}i\phantom{{+}1}$};		

%
%
%
%
%
%
%
	\draw(1.75,0.5)	 node {$t_i=$};
	\end{tikzpicture}	
	$$
	 $$
  \begin{tikzpicture} [scale=0.8]

		\path (4,1) coordinate (origin); 
 		\path (origin)--++(0.5,0.5) coordinate (origin2);  
	 	\draw(origin2)--++(0:6.5) coordinate(X);
			 	\draw (origin2)--++(180:1.5)coordinate (Y); 	\draw[densely dotted](Y)--++(180:0.3); 
							 	\draw[densely dotted](X)--++(0:0.3); 
		\foreach \i in {-2,-1,0,1,2,3,4,5,...,12}
		{
			\path (origin2)--++(0:0.5*\i) coordinate (a\i); 
			\path (origin2)--++(0:0.5*\i)--++(-90:0.00) coordinate (c\i); 
			  }
		
		\foreach \i in {0,1,2,3,4,5,...,19}
		{
			\path (origin2)--++(0:0.25*\i) --++(-90:0.5) coordinate (b\i); 
			\path (origin2)--++(0:0.25*\i) --++(-90:0.9) coordinate (d\i); 
		}

	\path (4,-1) coordinate (origin); 
 		\path (origin)--++(0.5,0.5) coordinate (origin2);  
	 	\draw(origin2)--++(0:5.5) coordinate(X);
			 	\draw (origin2)--++(180:0.5)coordinate (Y); 	\draw[densely dotted](Y)--++(180:0.3); 
							 	\draw[densely dotted](X)--++(0:0.3); 
		\foreach \i in {0,1,2,3,4,5,...,10}
		{
			\path (origin2)--++(0:0.5*\i) coordinate (Da\i); 
			\path (origin2)--++(0:0.5*\i)--++(-90:0.00) coordinate (Dc\i); 
			  }
		
		\foreach \i in {1,2,3,4,5,...,19}
		{
			\path (origin2)--++(0:0.25*\i) --++(-90:0.5) coordinate (Db\i); 
			\path (origin2)--++(0:0.25*\i) --++(-90:0.9) coordinate (Dd\i); 
		}




		\draw[    thick](c8) to [out=-90, in=90] (Dc6);
		\draw[    thick](c10) to [out=-90, in=90] (Dc8);
		\draw[    thick](c12) to [out=-90, in=90] (Dc10);

		\draw[    thick](c-2) to [out=-90, in=90] (Dc0);

		\draw[    thick](c0) to [out=-90, in=90] (Dc2);
		\draw[    thick](c2) to [out=-90, in=90] (Dc4);
		\draw[    thick](c6) to [out=-90,in=0] (b10) to [out=180,in=-90] (c4); 
		\draw(c6) node [above] {\scalefont{0.7}$i{+}1$};
		\draw(c4) node [above] {\scalefont{0.7}$\phantom{{+}1}i\phantom{{+}1}$};		

%
%
%
%
%
%
%
					\path(Dc4) node [below] {\scalefont{0.7}$\phantom{{+}1}\phantom{{+}1}$};		
		\draw(1.75,0.5)	 node {$t_i^\ast=$}; 
	\end{tikzpicture}
$$where $t_i$ (respectively $t_i^*$) has $m+n-2$ vertices at the top (respectively bottom) and $m+n$ vertices at the bottom (respectively top). 
The bimodule $\mathbf{K}^{t_i}$ has basis given by all oriented diagrams of the form $\underline{\mu}\nu t_i \la \overline{\eta}$  for $\la, \eta\in \Lambda_{m-1,n-1}$ and $\mu , \nu \in \Lambda_{m,n}$. This is a left $K^m_n$-module and a right $K^{m-1}_{n-1}$-module where the actions are given by the surgery procedure described earlier.  The $(K^{m-1}_{n-1}, K^m_n)$-bimodule $\mathbf{K}^{t_i^*}$ is defined similarly. Tensoring with these bimodules gives rise to two functors
\begin{align*}
G^{t_i} \, : \, K^{m-1}_{n-1}\!\! - \!\! {\rm mod} & \rightarrow \Kmod  & G^{t_i^*} \, : \, \Kmod & \rightarrow K^{m-1}_{ n-1}\!\! - \!\! {\rm mod} \\ 
N & \mapsto \mathbf{K}^{t_i}\otimes_{K_{n-1}^{m-1}} N & 
M & \mapsto \mathbf{K}^{t_i^*}\otimes_{K_n^m} M
\end{align*}
for all $N\in  K^{m-1}_{n-1}\!\! - \!\! {\rm mod} $ and $M\in \Kmod$.
These functors satisfy the following properties.

\begin{thm}\label{exactadjoint}  \cite[Corollary 4.3, 4.4, 4.9, and Theorem 4.10]{MR2600694}
\begin{enumerate}
\item $\mathbf{K}^{t_i}$ is projective as a left $K^m_n$- and right $K^{m-1}_{n-1}$-module. Similarly, $\mathbf{K}^{t_i^*}$ is projective as a left $K_{n-1}^{m-1}$- and right $K_n^m$-module.
\item The functors $G^{t_i}$ and $G^{t_i^*}$ are exact and take projective modules to projective modules. 
\item The functors $G^{t_i}$ and $G^{t_i^*}$ commute with the duality functor $^{\ostar}$.
\item The functor $G^{t_i^*}$ is both left and right adjoint to the functor $G^{t_i}$, i.e. for any $X\in \Kmod$, $Y\in K^{m-1}_{n-1}\!\! - \!\! {\rm mod}$ we have
$$\Hom_{K^m_n}(X, G^{t_i}(Y)) \cong \Hom_{K^{m-1}_{n-1}}(G^{t_i^*}(X), Y)$$
and
$$\Hom_{K^m_n}(G^{t_i}(Y),X) \cong \Hom_{K^{m-1}_{n-1}}(Y,G^{t_i^*}(X)).$$
\end{enumerate}
\end{thm}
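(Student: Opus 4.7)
The plan is to prove the four parts in the order listed, using (1) as the foundational result that enables the others. All arguments are diagrammatic and rely on the explicit basis of $\mathbf{K}^{t_i}$ given by oriented diagrams $\underline{\mu}\nu t_i \la \overline{\eta}$ with $\mu,\nu \in \Lambda_{m,n}$ and $\la, \eta \in \Lambda_{m-1,n-1}$.

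For (1), the strategy is to produce an explicit decomposition of $\mathbf{K}^{t_i}$ as a direct sum of indecomposable projective left $K^m_n$-modules. For each $\la \in \Lambda_{m-1,n-1}$, one observes that the orientations of the bottom part of a diagram $\underline{\mu}\nu t_i \la \overline{\eta}$ force $\nu$ to be determined up to adding symbols in positions $i, i+1$: specifically, fixing $\la$, the possible $\nu$ are exactly those in $\Lambda^{\down\up}_{m,n}(i)$ or $\Lambda^{\up\down}_{m,n}(i)$ (depending on the orientation of the cap in $t_i$) with $\nu' = \la$, in the notation of \cref{plusminusprime}. I would show that the subspace of $\mathbf{K}^{t_i}$ spanned by diagrams with a fixed bottom half $\la \overline{\eta}$ is isomorphic, as a left $K^m_n$-module, to $K^m_n e_{\nu_0}$ where $e_{\nu_0} = \underline{\nu_0}\nu_0\overline{\nu_0}$ for an appropriate choice of $\nu_0$. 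Summing over all $\la, \eta$ gives a decomposition into projectives. Projectivity as a right $K^{m-1}_{n-1}$-module follows by a symmetric argument, or by applying the $180^\circ$ rotation anti-isomorphism of \cref{rmkmn1}.

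Part (2) is then largely formal. Exactness of $G^{t_i}$ follows from projectivity (hence flatness) of $\mathbf{K}^{t_i}$ as a right $K^{m-1}_{n-1}$-module; exactness of $G^{t_i^*}$ is symmetric. To see that $G^{t_i}$ preserves projectives, it suffices to check this on the indecomposable projectives $P_{m-1,n-1}(\la) = K^{m-1}_{n-1} e_\la$, in which case $G^{t_i}(P_{m-1,n-1}(\la)) = \mathbf{K}^{t_i}e_\la$ is a summand of $\mathbf{K}^{t_i}$ as a left $K^m_n$-module, hence projective by (1). The same argument handles $G^{t_i^*}$.

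For (3), I would construct a natural isomorphism $G^{t_i}(N)^\ostar \cong G^{t_i}(N^\ostar)$. The duality $^\ostar$ is realised diagrammatically by the anti-automorphism $\underline{\la}\mu\overline{\nu} \mapsto \underline{\nu}\mu\overline{\la}$; analogously, there is a natural identification of the bimodule $\mathbf{K}^{t_i}$ with its ``mirror'' obtained by flipping diagrams upside down and replacing $t_i$ by $t_i^*$. One checks that this flip is compatible with the surgery-based bimodule actions, yielding the required natural isomorphism. Part (4) is the standard tensor-Hom adjunction: $G^{t_i^*}$ is left adjoint to $G^{t_i}$ essentially by interpreting $\mathbf{K}^{t_i^*}$ as $\Hom_{K^m_n}(\mathbf{K}^{t_i}, K^m_n)$ using the projectivity from (1), and then the second adjunction (right adjoint) is obtained by combining the first with the duality result of (3) applied on both sides.

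The main obstacle is the combinatorial bookkeeping in (1): one must verify the claimed basis of each summand is compatible with the surgery multiplication and correctly identify the idempotent giving the projective cover. Once this explicit decomposition is in hand, everything else reduces to standard homological algebra combined with the diagrammatic symmetries already present in the setup.
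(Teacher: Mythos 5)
This theorem is stated in the paper as a citation to Brundan--Stroppel \cite[Corollaries 4.3, 4.4, 4.9 and Theorem 4.10]{MR2600694}; the present paper gives no proof of its own, so the comparison has to be against the argument in that reference. Your sketch does follow the same broad strategy Brundan--Stroppel use: establish the explicit decomposition of the bimodule into indecomposable projectives, then deduce exactness, preservation of projectives, and compatibility with duality more or less formally, and obtain biadjointness via the bimodule identification and duality.

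There is, however, a genuine gap in your treatment of part (4). You write that one can "interpret $\mathbf{K}^{t_i^*}$ as $\Hom_{K^m_n}(\mathbf{K}^{t_i}, K^m_n)$ using the projectivity from (1)." Projectivity is what guarantees that $\Hom_{K^m_n}(\mathbf{K}^{t_i},-)\cong \Hom_{K^m_n}(\mathbf{K}^{t_i},K^m_n)\otimes_{K^m_n}(-)$, but it says nothing about whether the $(K^{m-1}_{n-1},K^m_n)$-bimodule $\Hom_{K^m_n}(\mathbf{K}^{t_i},K^m_n)$ is actually isomorphic to $\mathbf{K}^{t_i^*}$. That identification is the real technical content of the biadjointness and requires a direct diagrammatic computation (pairing diagrams of $\mathbf{K}^{t_i}$ against those of $\mathbf{K}^{t_i^*}$ via surgery, and checking the resulting pairing is perfect and bimodule-equivariant); in the graded setting of Brundan--Stroppel it also produces a grading shift which has to be tracked. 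Your sketch treats this as automatic, which it is not. A smaller imprecision: the $180^\circ$ rotation of \cref{rmkmn1} is an anti-isomorphism $K^m_n\to K^n_m$, i.e.\ it swaps $m$ and $n$, so applying it to $\mathbf{K}^{t_i}$ (a $(K^m_n,K^{m-1}_{n-1})$-bimodule) lands you in the $(K^{n-1}_{m-1},K^n_m)$-bimodule world rather than directly yielding right-projectivity over $K^{m-1}_{n-1}$; you would either need to compose with $^*$ on both sides or, more cleanly, just run the decomposition argument of (1) symmetrically on the right. Finally, the assertion that the subspace of $\mathbf{K}^{t_i}$ with fixed bottom half $\la\overline{\eta}$ is isomorphic to $K^m_n e_{\nu_0}$ for a single $\nu_0$ is roughly right (with $\nu_0=\la^+$), but this identification must be checked to be a $K^m_n$-module isomorphism against the surgery action, not just a bijection of spanning diagrams; this is exactly the bookkeeping you flag, and it is where the work in (1) lives.
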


%

We also recall the effect of these functors on projective, simple and standard modules. Recall the notation from \cref{plusminusprime}.

\begin{thm}\label{Gsimplestandard} \cite[Theorem 4.2, 4.5, and 4.11]{MR2600694}
 Let $1\leq i <m+n$.
Suppose
$\la^+\in \Lambda_{m,n}^{\down \up}(i)$ with corresponding $\la'\in \Lambda_{m-1,n-1}$ and $\lambda^- \in \Lambda_{m,n}^{\up \down}(i)$. 
Then we have 
\begin{align}\label{Gsimplestandard1}
G^{t_i}P_{m-1,n-1}(\la') = P_{m,n}(\la^+).\end{align}
  There is a non-split exact sequence
\begin{align}\label{Gsimplestandard2}
0 \rightarrow \Delta_{m,n}(\la^-) \rightarrow G^{t_i}\Delta_{m-1,n-1}(\la') \rightarrow \Delta_{m,n}(\la^+)\rightarrow 0.
\end{align}
We also have
\begin{align}\label{Gsimplestandard3}
G^{t_i^*}\Delta_{m,n}(\mu) = \left\{ \begin{array}{ll} \Delta_{m-1,n-1}(\la') & \mbox{if $\mu = \la^+ \in \Lambda_{m,n}^{\down \up}(i)$ or $\mu = \la^-\in \Lambda_{m,n}^{ \up\down}(i)$} \\ 0 & \mbox{if $\mu \notin \Lambda_{m,n}^{\down \up}(i) \cup \Lambda_{m,n}^{\up \down}(i)$} \end{array}\right.
\end{align}
and 
\begin{align}\label{Gsimplestandard4}
G^{t_i^*}L_{m,n}(\mu) = \left\{ \begin{array}{ll} L_{m-1,n-1}(\la') & \mbox{if $\mu = \la^+\in \Lambda_{m,n}^{\down \up}(i)$}\\ 0 & \mbox{if $\mu \notin \Lambda_{m,n}^{\down \up}(i)$} \end{array} \right.
\end{align}
\end{thm}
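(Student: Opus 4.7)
The plan is to work diagrammatically with the bimodules $\mathbf{K}^{t_i}$ and $\mathbf{K}^{t_i^*}$ on explicit bases, leveraging exactness (\cref{exactadjoint}) together with adjunction to bootstrap from projectives to standards and simples.

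For \eqref{Gsimplestandard1}, writing $P_{m-1,n-1}(\la') = K^{m-1}_{n-1}e_{\la'}$ gives $G^{t_i}P_{m-1,n-1}(\la') \cong \mathbf{K}^{t_i}e_{\la'}$, which is spanned by oriented diagrams $\underline{\mu}\,\nu\, t_i\, \la'\,\overline{\la'}$. The cup of $t_i$ at positions $(i,i+1)$ on the $\Lambda_{m,n}$-side forces $\nu\in\Lambda_{m,n}^{\down\up}(i)$, and the remaining data of such a diagram encodes precisely an oriented diagram $\underline{\mu}\,\nu\,\overline{\la^+}$. This yields a $K^m_n$-equivariant bijection of diagram bases and hence an isomorphism with $K^m_n e_{\la^+} = P_{m,n}(\la^+)$. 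The symmetric argument establishes the companion identity $G^{t_i^*}P_{m,n}(\la^\pm) \cong P_{m-1,n-1}(\la')$, and also shows $G^{t_i^*}P_{m,n}(\mu) = 0$ whenever $\mu$ has labels $\up\up$ or $\down\down$ at $(i,i+1)$ since the cap of $t_i^*$ cannot then be consistently oriented.

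For \eqref{Gsimplestandard3}, apply the exact functor $G^{t_i^*}$ to the defining sequence
\[
0 \to O^{\pi(\mu)}(\rad P_{m,n}(\mu)) \to P_{m,n}(\mu) \to \Delta_{m,n}(\mu) \to 0.
\]
The $\up\up$ and $\down\down$ cases are immediate from the projective vanishing above. For $\mu\in\{\la^+,\la^-\}$, a composition-factor argument via \cref{decnumbers} combined with the jointly-established vanishing $G^{t_i^*}L_{m,n}(\nu) = 0$ for $\nu \notin \Lambda_{m,n}^{\down\up}(i)$ identifies the image of the first term with $O^{\pi(\la')}(\rad P_{m-1,n-1}(\la'))$, giving $G^{t_i^*}\Delta_{m,n}(\mu) = \Delta_{m-1,n-1}(\la')$. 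The exact sequence \eqref{Gsimplestandard2} then arises by applying the exact $G^{t_i}$ to the projective presentation of $\Delta_{m-1,n-1}(\la')$, using \eqref{Gsimplestandard1} to identify $G^{t_i}P_{m-1,n-1}(\la') = P_{m,n}(\la^+)$, and pinning down the two-step filtration by composition-factor counts via adjunction and \cref{decnumbers}. Non-splitness follows from the count
\[
[G^{t_i}\Delta_{m-1,n-1}(\la') : L_{m,n}(\la^+)] = \dim\Hom_{K^{m-1}_{n-1}}(P_{m-1,n-1}(\la'), \Delta_{m-1,n-1}(\la')) = 1,
\]
obtained via adjunction and the companion identity $G^{t_i^*}P_{m,n}(\la^+) = P_{m-1,n-1}(\la')$, whereas the hypothetical split sum $\Delta_{m,n}(\la^+) \oplus \Delta_{m,n}(\la^-)$ would contain $L_{m,n}(\la^+)$ with multiplicity $2$ by \cref{decnumbers} (since $\underline{\la^+}\la^-$ is oriented).

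Finally for \eqref{Gsimplestandard4}, apply $G^{t_i^*}$ to the surjection $\Delta_{m,n}(\mu) \twoheadrightarrow L_{m,n}(\mu)$. For $\mu = \la^+$, adjunction with \eqref{Gsimplestandard1} yields
\[
\Hom_{K^{m-1}_{n-1}}(G^{t_i^*}L_{m,n}(\la^+), L_{m-1,n-1}(\la')) = \Hom_{K^m_n}(L_{m,n}(\la^+), L_{m,n}(\la^+)) = \Bbbk,
\]
so the head of $G^{t_i^*}\Delta_{m,n}(\la^+) = \Delta_{m-1,n-1}(\la')$ is $L_{m-1,n-1}(\la')$. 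For $\mu \notin \Lambda_{m,n}^{\down\up}(i)$ one needs $G^{t_i^*}L_{m,n}(\mu) = 0$: the $\up\up$ and $\down\down$ cases follow from the projective vanishing, while $\mu \in \Lambda_{m,n}^{\up\down}(i)$ requires a direct diagrammatic argument showing that every head-generator of $L_{m,n}(\mu)$ is annihilated by the cap of $t_i^*$. The main obstacle is precisely this last step: \eqref{Gsimplestandard3} and \eqref{Gsimplestandard4} are logically intertwined, and the vanishing $G^{t_i^*}L_{m,n}(\la^-) = 0$ must be established without presupposing the standard-module formula, which forces a careful analysis of the structure of $L_{m,n}(\la^-)$ in terms of explicit oriented diagrams and the surgery procedure across $t_i^*$.
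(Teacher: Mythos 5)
This statement is cited directly from Brundan--Stroppel \cite{MR2600694}; the paper itself provides no proof, so I am assessing your sketch on its own terms.

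There are two substantive problems beyond the gap you yourself acknowledge.

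First, the ``companion identity'' $G^{t_i^*}P_{m,n}(\la^+) \cong P_{m-1,n-1}(\la')$ is false. Since $P_{m,n}(\la^+) = G^{t_i}P_{m-1,n-1}(\la')$ by \eqref{Gsimplestandard1}, one has $G^{t_i^*}P_{m,n}(\la^+) = G^{t_i^*}G^{t_i}P_{m-1,n-1}(\la')$, and the bimodule identity $\mathbf{K}^{t_i^*}\otimes_{K^m_n}\mathbf{K}^{t_i} \cong K^{m-1}_{n-1}\langle 1\rangle \oplus K^{m-1}_{n-1}\langle -1\rangle$ (stacking $t_i^*$ on $t_i$ produces the identity matching together with a closed circle, and the circle contributes $\Bbbk[x]/(x^2)$) gives $G^{t_i^*}G^{t_i}\cong \mathrm{id}\langle 1\rangle\oplus\mathrm{id}\langle -1\rangle$. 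So $G^{t_i^*}P_{m,n}(\la^+)\cong P_{m-1,n-1}(\la')^{\oplus 2}$, not a single copy. This invalidates your multiplicity count: the correct value is $[G^{t_i}\Delta_{m-1,n-1}(\la') : L_{m,n}(\la^+)] = 2$, which is exactly the multiplicity in the split sum $\Delta(\la^+)\oplus\Delta(\la^-)$ as well, so your comparison $1 \ne 2$ does not detect non-splitness. A correct and short argument: $G^{t_i}\Delta_{m-1,n-1}(\la')$ is a quotient of $G^{t_i}P_{m-1,n-1}(\la') = P_{m,n}(\la^+)$ and hence has \emph{simple} head $L_{m,n}(\la^+)$; the split sum would have head $L_{m,n}(\la^+)\oplus L_{m,n}(\la^-)$, a contradiction. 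Relatedly, your adjunction computation in \eqref{Gsimplestandard4}, where you write $\Hom_{K^{m-1}_{n-1}}(G^{t_i^*}L_{m,n}(\la^+), L_{m-1,n-1}(\la')) = \Hom_{K^m_n}(L_{m,n}(\la^+), L_{m,n}(\la^+))$, is not a valid use of either adjunction; the clean way is to compute $[G^{t_i^*}L_{m,n}(\la^+):L_{m-1,n-1}(\nu)] = \dim\Hom(G^{t_i}P_{m-1,n-1}(\nu), L_{m,n}(\la^+)) = \dim\Hom(P_{m,n}(\nu^+), L_{m,n}(\la^+))$, which is $1$ for $\nu = \la'$ and $0$ otherwise.

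Second, the circularity you flag is a genuine structural flaw in the plan, not merely a technicality to be patched. You propose to prove \eqref{Gsimplestandard3} by applying $G^{t_i^*}$ to the presentation $0\to O^{\pi(\mu)}(\rad P(\mu))\to P(\mu)\to\Delta(\mu)\to 0$ and controlling the kernel by composition-factor arguments, which requires knowing $G^{t_i^*}L_{m,n}(\nu)=0$ for $\nu\notin\Lambda^{\down\up}_{m,n}(i)$ --- in particular for $\nu=\la^-$ --- while that very vanishing is part of \eqref{Gsimplestandard4} and is established in your account only after \eqref{Gsimplestandard3}. Routing the standard modules through their projective presentations is what forces this dependence. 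The Brundan--Stroppel approach avoids it entirely: the cell modules $\Delta(\mu)$ have an explicit diagrammatic basis (essentially $\{\underline{\nu}\mu \mid \underline{\nu}\mu \text{ oriented}\}$), and one computes $\mathbf{K}^{t_i^*}\otimes_{K^m_n}\Delta_{m,n}(\mu)$ and $\mathbf{K}^{t_i}\otimes_{K^{m-1}_{n-1}}\Delta_{m-1,n-1}(\la')$ directly by surgery on these bases. This yields \eqref{Gsimplestandard2} and \eqref{Gsimplestandard3} without any reference to simple modules, after which \eqref{Gsimplestandard4} follows from \eqref{Gsimplestandard3} together with exactness and the multiplicity arguments above (no circularity remains, because one never needs the vanishing on simples to establish the standard-module statements). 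If you want to keep a self-contained proof, I would recommend reorganising along those lines: establish \eqref{Gsimplestandard1}, \eqref{Gsimplestandard3}, and \eqref{Gsimplestandard2} purely diagrammatically on explicit bases, and deduce \eqref{Gsimplestandard4} last.
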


\section{Tilting modules for $K^m_n$}

We are now ready to construct the tilting modules for the extended Khovanov arc algebras.

\begin{thm}\label{tilting}
For each $\la\in \Lambda_{m,n}$, define $T(\la) = T_{m,n}(\la)$ inductively as follows.
\begin{itemize}[leftmargin=*]
\item For $\la = (m^n)$ we define $T(m^n)=\Delta (m^n) = L(m^n)$. 
\item Now for $\la > (m^n)$ we have $\la = \la^- \in \Lambda^{\up \down}_{m,n}(i)$ for some $1\leq i < m+n$. Let $\la'\in \Lambda_{m-1,n-1}$ be the weight obtained from $\la$ by removing the symbols in position $i$ and $i+1$ and define 
$$T_{m,n}(\la) = G^{t_i}(T_{m-1, n-1}(\la')).$$ 
\end{itemize}
Then $T(\la)$ satisfies the following properties.
\begin{enumerate}[leftmargin=*]
\item $T(\la)^{\ostar} \cong T(\la)$.
\item We have an exact sequence
$$0 \rightarrow \Delta(\la) \rightarrow T(\la) \rightarrow J(\la) \rightarrow 0$$
with $J(\la) \in (\Kmod)^\Delta$ and $(J(\la):\Delta(\mu))\neq 0$ implies $\mu < \la$.
\item $T(\la)$ has simple socle isomorphic to $L(\la^\circ)$.
\end{enumerate}
Thus $T(\la)$ is the indecomposable tilting module with highest weight $\la$.
\end{thm}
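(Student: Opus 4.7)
The plan is to establish properties (1)--(3) simultaneously by induction on $m+n$, exploiting the inductive definition $T(\la)=G^{t_i}T_{m-1,n-1}(\la')$ together with the exactness, adjointness, and standard-module behaviour of the projective functors recorded in \cref{exactadjoint,Gsimplestandard}. The base case $\la=(m^n)$ is immediate: as the minimum weight we have $\Delta(m^n)=L(m^n)=T(m^n)$, and since $(m^n)=\down\cdots\down\up\cdots\up$ already attains the maximal number $\min(m,n)$ of cups in its cup diagram, it is regular with $(m^n)^\circ=(m^n)$, so all three properties follow from the standing fact $L(m^n)^\ostar\cong L(m^n)$.

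For the inductive step fix $\la>(m^n)$ and choose $i$ with $\la=\la^-\in\Lambda_{m,n}^{\up\down}(i)$. Property (1) follows at once from the commutation of $G^{t_i}$ with $^\ostar$ (\cref{exactadjoint}(3)) and the inductive hypothesis. For property (2), apply the exact functor $G^{t_i}$ to $0\to\Delta_{m-1,n-1}(\la')\to T_{m-1,n-1}(\la')\to J(\la')\to 0$ and splice in \eqref{Gsimplestandard2} to exhibit $\Delta(\la)=\Delta(\la^-)$ as the submodule at the bottom of $G^{t_i}\Delta_{m-1,n-1}(\la')$, and hence of $T(\la)$. The quotient $J(\la)$ then inherits a $\Delta$-filtration whose sections are $\Delta(\la^+)$ together with the sections $\Delta(\mu^-)$ and $\Delta(\mu^+)$ produced via \eqref{Gsimplestandard2} from a filtration of $J(\la')$ by $\Delta_{m-1,n-1}(\mu')$ with $\mu'<\la'$. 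A direct weight check gives $\la^+<\la^-$ (the swap $\up\down\mapsto\down\up$ moves a $\down$ to the left, so $\la^+$ is smaller), $\mu^-<\la^-$ (both have $\up\down$ at $(i,i+1)$, so the comparison reduces to $\mu'<\la'$), and $\mu^+<\la^+<\la^-$; hence every section of $J(\la)$ is indexed by a weight strictly less than $\la$.

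For property (3), adjunction and \eqref{Gsimplestandard4} yield
\[
\Hom_{K^m_n}(L(\nu),T(\la))\cong\Hom_{K^{m-1}_{n-1}}(G^{t_i^*}L(\nu),T_{m-1,n-1}(\la')),
\]
which vanishes unless $\nu\in\Lambda_{m,n}^{\down\up}(i)$, in which case it equals $\Hom(L_{m-1,n-1}(\nu'),T_{m-1,n-1}(\la'))$. Since the inductive hypothesis provides $\soc T_{m-1,n-1}(\la')=L((\la')^\circ)$, this Hom-space is $\Bbbk$ precisely when $\nu'=(\la')^\circ$ and zero otherwise. Hence $\soc T(\la)=L(((\la')^\circ)^+)$ is simple. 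It remains to verify the combinatorial identity $\la^\circ=((\la')^\circ)^+$, which follows by reading off the algorithm of \cref{lambdacirc}: because $\la$ has $\up\down$ at positions $(i,i+1)$, these form an $\up\down$ neighbour pair that may be paired off by a clockwise cup at the very first step, after which the remaining vertices assemble into $\la'$ and the remainder of the algorithm coincides with that producing $(\la')^\circ$. Thus $\underline{\la^\circ}$ consists of a cup at $(i,i+1)$ --- forcing $\down\up$ at those positions in $\la^\circ$ --- together with $\underline{(\la')^\circ}$ on the remaining vertices, so $(\la^\circ)'=(\la')^\circ$ and $\la^\circ=((\la')^\circ)^+$.

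Once (1)--(3) are established, the simple socle forces $T(\la)$ to be indecomposable, while properties (1) and (2) together make it a tilting module with highest weight $\la$; uniqueness of the indecomposable tilting module with highest weight $\la$ (Ringel's theorem recalled in Section 2.1) then identifies it as $T(\la)$ and shows \emph{a posteriori} that the construction is independent of the choice of $i$. The principal obstacle is the combinatorial identity $\la^\circ=((\la')^\circ)^+$, which requires careful unwinding of the recursive structure of the algorithm in \cref{lambdacirc}; the remainder of the argument is a clean chase through exactness and adjunction for the projective functors.
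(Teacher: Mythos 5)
Your proof is correct and follows essentially the same route as the paper: induction via the projective functor $G^{t_i}$, using exactness and self-adjointness for (1), splicing \eqref{Gsimplestandard2} into the inductive $\Delta$-filtration for (2), and the adjunction chain plus \eqref{Gsimplestandard4} for (3). The only thing you do beyond the paper's exposition is spell out the weight inequalities $\mu^\pm < \la^-$ and the combinatorial identity $\la^\circ = ((\la')^\circ)^+$, which the paper asserts ``by construction''; both verifications are correct.
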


\begin{proof} First note that as $(m^n)$ is minimal, we have $\Delta(m^n) = L(m^n)$.
 We have that (1) is immediate as $L(m^n)^{\ostar} \cong L(m^n)$ and the functor $G^{t_i}$ commutes with $\ostar$  by \cref{exactadjoint}(3).

We now consider (2). For $\la = (m^n)$, we have $T(m^n)=\Delta(m^n)=L(m^n)$ and $J(m^n)=\{0\}$. Now assume by induction that $T_{m-1, n-1}(\la')$ satisfies (2). So we have $(T_{m-1, n-1}(\la'):\Delta_{m-1,n-1}(\mu'))\neq 0$ implies $\mu' \leq \la'$ and $(T_{m-1, n-1}(\la'):\Delta_{m-1,n-1}(\la'))=1$.  Now applying the exact functor $G^{t_i}$ to each $\Delta (\mu')$ gives two factors $\Delta(\mu^-)$ and $\Delta(\mu^+)$ satisfying $\mu^+, \mu^- \leq \la = \la^-$.  Moreover, $\mu^- = \la^-$ if and only if $\mu' = \la'$, and so $(T_{m,n}(\la): \Delta_{m,n}(\la)) = 1$ as required.
Now, by induction we have $\Delta_{m-1,n-1}(\la')\hookrightarrow T_{m-1, n-1}(\la')$. Applying the exact functor $G^{t_i}$ we obtain
$$G^{t_i}(\Delta_{m-1,n-1}(\la'))\hookrightarrow T_{m, n}(\la).$$
Using \cref{Gsimplestandard2} gives the required inclusion.

Finally, we consider (3). We have $\soc T(m^n)=\soc L(m^n)=L(m^n)$ and by construction $(m^n)^\circ = (m^n)$ so the result holds for $\la = (m^n)$. Now for $\la > (m^n)$ we have
\begin{eqnarray*}
\Hom_{K^m_n}(L_{m,n}(\mu), T_{m,n}(\la)) &=& \Hom_{K^m_n}(L_{m,n}(\mu), G^{t_i}T_{m,n}(\la'))\\
&=& \Hom_{K^{m-1}_{n-1}}(G^{t_i^*}L_{m,n}(\mu), T_{m-1,n-1}(\la')).
\end{eqnarray*}
by \cref{exactadjoint}(4). 
Now using \cref{Gsimplestandard4} we have 
$$G^{t_i^*}L_{m,n}(\mu) = \left\{ \begin{array}{ll} L_{m-1, n-1}(\mu') & \mbox{if $\mu =\mu^+ \in \Lambda^{\down \up}_{m, n}(i)$}\\
0 & \mbox{otherwise} \end{array} \right. .$$
By induction we have 
$$
\Hom_{K^{m-1}_{n-1}}(L_{m-1,n-1}(\mu'), T_{m-1,n-1}(\la')) = \left\{ \begin{array}{ll} \Bbbk & \mbox{if $\mu' = (\la')^\circ$,}\\ 0 & \mbox{otherwise.} \end{array}\right .$$
Finally, note that if $\mu' = (\la')^\circ$ then $\mu = \mu^+ = \la^\circ$ by construction (remember that $\la = \la^- \in \Lambda^{\up \down}_{m,n}(i)$). This proves that $T_{m,n}(\la)$ has simple socle isomorphic to $L_{m,n}(\la^\circ)$ as required. 
\end{proof}

\section{Schur functor and the Khovanov arc algebra $H^m_n$}

We now introduce the original Khovanov arc algebras as idempotent truncations of the extended Khovanov arc algebras. 
Whilst this is anti-chronological, this way of presenting the material is more natural from our perspective and involves less doubling-up of  content.

\begin{thm}\cite[Theorem  6.1]{MR2600694}
\label{prinj} 
Let $\la \in \Lambda_{m,n}$. The following conditions are equivalent.
\begin{enumerate}
\item $\la \in \Lambda_{m,n}^\circ$.
\item $P(\la)^{\ostar} \cong P(\la)$.
\item $P(\la)$ is projective injective.
\end{enumerate}
\end{thm}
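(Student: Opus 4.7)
The plan is to prove the cyclic chain $(1)\Rightarrow(2)\Rightarrow(3)\Rightarrow(1)$.

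For $(1)\Rightarrow(2)$ I will argue by induction on $\min(m,n)$. The base case $\min(m,n)=0$ is trivial: the set $\Lambda_{m,n}$ contains a single weight, the algebra $K^m_n$ is one-dimensional, and the unique simple projective $P(\lambda)=L(\lambda)$ is self-dual. For the inductive step, assume $\lambda\in\Lambda_{m,n}^\circ$ with $\min(m,n)\geq 1$. Then $\underline{\lambda}$ contains $\min(m,n)\geq 1$ non-crossing cups, so there must be an innermost one joining a pair of neighbouring vertices $i,i+1$, which are necessarily labelled $\down,\up$; thus $\lambda=\lambda^+\in\Lambda_{m,n}^{\down\up}(i)$. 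The reduced weight $\lambda'\in\Lambda_{m-1,n-1}$ obtained by deleting positions $i,i+1$ has cup diagram with exactly $\min(m,n)-1=\min(m-1,n-1)$ cups, so $\lambda'\in\Lambda_{m-1,n-1}^\circ$. Applying the inductive hypothesis $P_{m-1,n-1}(\lambda')^\ostar\cong P_{m-1,n-1}(\lambda')$ together with \cref{Gsimplestandard}\eqref{Gsimplestandard1} and the fact from \cref{exactadjoint}(3) that $G^{t_i}$ commutes with $\ostar$ yields $P_{m,n}(\lambda)^\ostar\cong P_{m,n}(\lambda)$.

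The implication $(2)\Rightarrow(3)$ is immediate: the duality $\ostar$ sends projectives to injectives, so $P(\lambda)^\ostar$ is always injective, and the hypothesis $P(\lambda)\cong P(\lambda)^\ostar$ forces $P(\lambda)$ itself to be injective. For $(3)\Rightarrow(1)$, observe that if $P(\lambda)$ is projective injective then $P(\lambda)^\ostar$ is also projective, so $P(\lambda)$ admits both a $\Delta$-filtration (by quasi-heredity) and a $\nabla$-filtration (by dualizing the $\Delta$-filtration of $P(\lambda)^\ostar$), i.e.\ $P(\lambda)$ is tilting. Since it is indecomposable, \cref{tilting} gives $P(\lambda)\cong T(\mu)$ for some $\mu\in\Lambda_{m,n}$. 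Comparing tops: the top of $P(\lambda)$ is $L(\lambda)$, while by self-duality of $T(\mu)$ (\cref{tilting}(1)) the top of $T(\mu)$ is isomorphic to its socle, which equals $L(\mu^\circ)$ by \cref{tilting}(3). Hence $\lambda=\mu^\circ$, which is regular by construction (\cref{lambdacirc}).

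The main technical point lies in the inductive step of $(1)\Rightarrow(2)$, specifically the combinatorial verification that a regular $\lambda$ with $\min(m,n)\geq 1$ always has an innermost cup at adjacent vertices and that removing it yields a regular weight in $\Lambda_{m-1,n-1}$. This follows from the non-crossing structure of $\underline{\lambda}$ and the identity $\min(m-1,n-1)=\min(m,n)-1$. Everything else is a formal manipulation using \cref{exactadjoint}, \cref{Gsimplestandard}, \cref{tilting} and the standard interaction between duality, projectives and injectives in a highest weight category with simple-preserving duality.
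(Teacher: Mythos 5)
The paper does not supply its own proof of \cref{prinj}; it cites \cite[Theorem 6.1]{MR2600694} outright. Your argument is therefore a reconstruction, and it is correct.

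Your implication $(1)\Rightarrow(2)$ is carried out exactly in the spirit one expects from the surrounding material: induction on $\min(m,n)$ via the projective functors $G^{t_i}$. The combinatorial hinge is sound: an innermost cup of $\underline{\lambda}$ necessarily sits at an adjacent pair $i,i{+}1$ labelled $\down\up$, so $\lambda=\lambda^+\in\Lambda^{\down\up}_{m,n}(i)$, and deleting these two positions drops the cup count by exactly one, so $\lambda'\in\Lambda^\circ_{m-1,n-1}$. Then \cref{Gsimplestandard}\eqref{Gsimplestandard1} gives $P_{m,n}(\lambda)=G^{t_i}P_{m-1,n-1}(\lambda')$ and \cref{exactadjoint}(3) transports the self-duality across $G^{t_i}$. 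The implication $(2)\Rightarrow(3)$ is the usual observation that $\ostar$ exchanges projectives and injectives.

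Where you genuinely diverge from the cited source is $(3)\Rightarrow(1)$. You pass through the tilting theory of \cref{tilting}: $P(\lambda)$ projective-injective makes $P(\lambda)^\ostar$ projective, so $P(\lambda)$ is an indecomposable tilting module and hence $\cong T(\mu)$ for some $\mu$ by Ringel's theorem; then self-duality (\cref{tilting}(1)) together with $\soc T(\mu)\cong L(\mu^\circ)$ (\cref{tilting}(3)) forces the simple top of $P(\lambda)$ to be $L(\mu^\circ)$, so $\lambda=\mu^\circ\in\Lambda^\circ_{m,n}$. This is a clean argument, and it is not circular: \cref{tilting} is proved in Section 5 using only \cref{exactadjoint}, \cref{Gsimplestandard}, \cref{soclestandard} and \cref{lambdacirc}, none of which depend on \cref{prinj}. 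Brundan and Stroppel's original proof does not phrase things through indecomposable tilting modules, so your route here is a genuinely different (and arguably more conceptual) way to close the cycle, at the cost of invoking the full tilting classification. Either way, the proof is complete and correct.
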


For each $\la\in \Lambda_{m,n}$, we set $e_\la = \underline{\la} \la \overline{\la} \in K^m_n$. Then it's easy to check that $e_\la^2 = e_\la$ and $e_\la e_\mu = 0$ when $\la \neq \mu$.  Note further that $e_\la^* = e_\la$.

\begin{defn}\cite[(6.8)]{MR2918294}
We define the {\sf Schur idempotent} to be the element
$$e = e_{m,n} = \sum_{\la \in \Lambda_{m,n}^\circ} e_\la\in K^m_n.$$
Then the {\sf Khovanov arc algebra} $H^m_n$ is defined to be the idempotent truncation
 $H^m_n := eK^m_n e.$
 \end{defn}
 
 \begin{rmk}\label{rmkmn2}
 Note that the algebra isomorphism $\curvearrowleft \circ ^* \, : \, K^m_n \rightarrow K_m^n$ given in \cref{rmkmn1} maps $e_{m,n}$ to $e_{n,m}$ and so we also have $H^m_n\cong H^n_m$.  
 \end{rmk}
 
We  have the corresponding Schur functor
\begin{align*}
f &= f_{m,n} \, : \, \Kmod \rightarrow \Hmod \, : \, M \mapsto eM
\intertext{and inverse Schur functors}
 g &= g_{m,n} \, : \, \Hmod \rightarrow \Kmod \, : \, N \mapsto \Hom_{H^m_n}(eK^m_n, N)
\intertext{and}
 \tilde{g} &= \tilde{g}_{m,n} \, : \, \Hmod \rightarrow \Kmod \, : \, N \mapsto K^m_n e\otimes_{H^m_n} N.
 \end{align*}
The functor $f$ is exact, the functor $g$ is left exact and the functor $\tilde{g}$ is right exact. Moreover, $f$ is left adjoint to $g$ and right adjoint to $\tilde{g}$. Also, as $e^*=e$ we have that all three functors commute with the duality functor $\ostar$.

\begin{thm}[{\cite[Proposition 6.3]{MR2600694}}]
The functor $f$ is fully faithful on $\Kproj$ and hence $(K^m_n, f)$ is a cover of $H^m_n$.
\end{thm}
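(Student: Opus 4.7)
The plan is to verify one of the equivalent cover conditions from Rouquier's criterion (the unnamed proposition immediately before \cref{R0faithful}): I aim to show that the unit $\eta(M)\colon M \to gf(M)$ is an isomorphism for every $M \in \Kproj$. The standard strategy is to produce, for each $\lambda \in \Lambda_{m,n}$, a copresentation $0 \to P(\lambda) \to Q^0 \to Q^1$ with $Q^0, Q^1 \in \operatorname{add}(K^m_n e)$. Given such a sequence, applying the left-exact functor $gf$ and using the naturality of $\eta$ yields a commutative diagram comparing two left-exact sequences whose vertical maps at $Q^0$ and $Q^1$ are tautologically isomorphisms (because $\eta$ is obviously iso on objects of $\operatorname{add}(K^m_n e)$); reading off kernels then forces $\eta(P(\lambda))$ to be an isomorphism as well.

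The key structural input is that every $\Delta$-filtered module has socle concentrated at regular weights: by \cref{soclestandard} we have $\operatorname{soc}\Delta(\mu) = L(\mu^\circ)$ with $\mu^\circ \in \Lambda_{m,n}^\circ$, and the socle of an extension of modules has composition factors only among the socles of the constituents, so the property propagates up any $\Delta$-filtration. Since $P(\lambda)$ is $\Delta$-filtered by quasi-heredity, $\operatorname{soc} P(\lambda) = \bigoplus_i L(\mu_i^\circ)$ for certain regular $\mu_i^\circ$. Combined with \cref{prinj}, the injective hull $I(L(\mu_i^\circ))$ equals the self-dual projective $P(\mu_i^\circ)^{\ostar} = P(\mu_i^\circ)$, so the injective hull of $P(\lambda)$ is a projective-injective module $Q^0 \in \operatorname{add}(K^m_n e)$, yielding the embedding $P(\lambda) \hookrightarrow Q^0$.

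The main obstacle is producing the second term $Q^1$; this amounts to showing that $Q^0/P(\lambda)$ likewise has socle concentrated at regular weights. The cleanest route uses tilting theory together with Section~5: for each regular $\mu^\circ$, the self-duality of $P(\mu^\circ)$ forces both $\Delta$- and $\nabla$-filtrations, so $P(\mu^\circ) = T(\mu^\circ)$ coincides with the indecomposable tilting module at that weight. Ringel's theorem then supplies a short exact sequence $0 \to P(\lambda) \to T \to N \to 0$ with $T$ tilting and $N \in (\Kmod)^\Delta$; the subtle point (which can be verified using the inductive tilting construction of \cref{tilting}) is that $T$ can be arranged in $\operatorname{add}(K^m_n e)$, after which $N \in (\Kmod)^\Delta$ has regular socle by the first step and embeds into the required $Q^1 \in \operatorname{add}(K^m_n e)$, completing the copresentation.
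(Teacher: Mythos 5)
This statement is cited directly from Brundan--Stroppel \cite[Proposition~6.3]{MR2600694}; the paper gives no proof of its own, so I am assessing your proposal purely on its internal correctness. Your overall strategy is the right one: the cover property is equivalent to $\operatorname{domdim}_e K^m_n \geq 2$, i.e.\ to the existence of a copresentation $0 \to P(\lambda) \to Q^0 \to Q^1$ with $Q^0, Q^1 \in \operatorname{add}(K^m_n e)$, and your first step (socles of $\Delta$-filtered modules are supported on $\Lambda^\circ_{m,n}$, whence $P(\lambda)$ embeds in a projective-injective $Q^0$) is correct and cleanly argued.

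There are, however, two problems in the second half. First, a factual error: the identity $P(\mu^\circ)=T(\mu^\circ)$ is false. It is true that $P(\mu^\circ)$ is an indecomposable tilting module (it is projective-injective by \cref{prinj}, hence both $\Delta$- and $\nabla$-filtered and self-dual), but $\mu^\circ$ is the label of its simple \emph{socle}, not its highest weight. The paper's own proof of \cref{tiltingsequence} computes $P(m^n)=T(m^{n-m})$, and in general $T(m^n)=L(m^n)\neq P(m^n)$, so the indices do not match. Second, and more seriously, the crux of the argument --- ``the subtle point \ldots is that $T$ can be arranged in $\operatorname{add}(K^m_n e)$'' --- is exactly the nontrivial content of the theorem and is asserted rather than proved. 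The Ringel tilting module $T$ of $P(\lambda)$ need not be projective-injective in general, and nothing in \cref{tilting} makes this obvious. Note also that the paper's \cref{tiltingsequence}, which does produce such copresentations for \emph{tilting} modules, requires $n\neq m$, whereas the cover property holds unconditionally (and $0$-faithfulness fails for $n=m$); so whatever argument is used for \emph{projectives} must be genuinely different in character, and cannot simply be that the relevant tilting module is already projective-injective. What you would actually need to show is that the quotient $Q^0/P(\lambda)$ of the injective hull by $P(\lambda)$ again has socle supported on $\Lambda^\circ_{m,n}$ (equivalently, that it is $\Delta$-filtered, or embeds in a $\Delta$-filtered module), and this requires a real structural argument — for instance using the positive grading of $K^m_n$ or the explicit diagrammatic description — which your proposal does not supply.
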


Using \cref{prinj} we have that $H^m_n$ is a symmetric algebra with projective injective modules given by $fP(\la)$ for $\la \in \Lambda_{m,n}^\circ$, whose simple head and socle are given by $D(\la) = D_{m,n}(\la) := fL_{m,n}(\la)$. In fact, Brundan--Stroppel proved in \cite[Theorem 6.2]{MR2918294} that $H^m_n$ is a cellular algebra with cell modules given by
$S(\la) = S_{m,n}(\la) :=f\Delta_{m,n}(\la)$, $\la \in \Lambda_{m,n}$.  We denote by $(\Hmod)^S$ the subcategory of $\Hmod$ consisting of modules with a cell filtration.

\medskip

Using the bimodules $\mathbf{H}^{t_i} :=e_{m,n}\mathbf{K}^{t_i}e_{m-1,n-1}$ and $\mathbf{H}^{t_i^*} :=e_{m-1,n-1}\mathbf{K}^{t_i^*}e_{m,n}$, we obtain analogues of the projective functors for the Khonavov arc algebra, namely
\begin{align*}
&\overline{G}^{t_i} \, : \, H^{m-1}_{n-1}\!\! - \!\! {\rm mod} \rightarrow \Hmod \, : \, N \mapsto \mathbf{H}^{t_i}\otimes_{H_{n-1}^{m-1}} N 
\intertext{and}
& \overline{G}^{t_i^*} \, : \, \Hmod \rightarrow H^{m-1}_{n-1}\!\! - \!\! {\rm mod} \, : \, M \mapsto \mathbf{H}^{t_i^*}\otimes_{H_n^m} M
 \end{align*}
for all $N\in  H^{m-1}_{n-1}\!\! - \!\! {\rm mod} $ and $M\in \Hmod$.

\begin{prop}\label{barGexact} The bimodule $\mathbf{H}^{t_i}$ is projective as a left $H_n^m$- and right $H_{n-1}^{m-1}$-module. Similarly,  $\mathbf{H}^{t_i^*}$ is projective as a left $H_{n-1}^{m-1}$- and right $H_{n}^{m}$-module. Thus, the functors $\overline{G}^{t_i}$ and $ \overline{G}^{t_i^*}$ are exact and take projectives to projectives.  
\end{prop}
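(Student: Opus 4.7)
The plan is to establish four bimodule projectivity statements, namely that $\mathbf{H}^{t_i}$ is projective as a left $H^m_n$-module and as a right $H^{m-1}_{n-1}$-module, and similarly for $\mathbf{H}^{t_i^*}$. Once this is done, exactness of $\overline{G}^{t_i}$ and $\overline{G}^{t_i^*}$ is automatic (right-projectivity of a bimodule gives an exact tensor functor over finite-dimensional algebras), and preservation of projectives follows because each functor sends the free rank-one module to the bimodule itself, which is projective on the target side.

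For the left $H^m_n$-projectivity of $\mathbf{H}^{t_i}$, I would expand $e_{m-1,n-1} = \sum_{\lambda \in \Lambda^\circ_{m-1,n-1}} e_\lambda$ and use the bimodule structure to write
$$\mathbf{K}^{t_i} e_{m-1,n-1} \;=\; \bigoplus_{\lambda \in \Lambda^\circ_{m-1,n-1}} \mathbf{K}^{t_i} e_\lambda \;\cong\; \bigoplus_{\lambda \in \Lambda^\circ_{m-1,n-1}} G^{t_i}(P_{m-1,n-1}(\lambda)) \;\cong\; \bigoplus_{\lambda \in \Lambda^\circ_{m-1,n-1}} P_{m,n}(\lambda^+)$$
as left $K^m_n$-modules, the last isomorphism coming from \eqref{Gsimplestandard1}. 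The combinatorial key is that $\lambda \in \Lambda^\circ_{m-1,n-1}$ forces $\lambda^+ \in \Lambda^\circ_{m,n}$: the construction of $\underline{\lambda^+}$ inserts a single extra cup at positions $i,i{+}1$ and leaves the rest of the cup diagram unchanged, so the number of cups grows from $\min(m-1,n-1)$ to $\min(m,n)$. Applying $e_{m,n}$ on the left and using $e_{m,n} e_{\lambda^+} = e_{\lambda^+}$ then identifies $e_{m,n} P_{m,n}(\lambda^+) = H^m_n e_{\lambda^+}$, an indecomposable projective left $H^m_n$-module, and the direct sum delivers left projectivity of $\mathbf{H}^{t_i}$.

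The same strategy applied to $\mathbf{H}^{t_i^*} = e_{m-1,n-1} \mathbf{K}^{t_i^*} e_{m,n}$ runs into the main obstacle: unlike in \cref{Gsimplestandard}, there is no explicit formula for $G^{t_i^*}$ on indecomposable projectives, so the summands $G^{t_i^*}(P_{m,n}(\lambda))$ of $\mathbf{K}^{t_i^*} e_{m,n}$ cannot be directly identified with projectives indexed by regular weights. I will bypass this via a self-duality argument: for $\lambda \in \Lambda^\circ_{m,n}$, \cref{prinj} gives $P_{m,n}(\lambda)^\ostar \cong P_{m,n}(\lambda)$, and \cref{exactadjoint} ensures that $G^{t_i^*}$ preserves projectives and commutes with $\ostar$. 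Therefore $G^{t_i^*}(P_{m,n}(\lambda))$ is a self-dual projective $K^{m-1}_{n-1}$-module; by Krull--Schmidt, each indecomposable summand is self-dual, and by \cref{prinj} again such summands must be of the form $P_{m-1,n-1}(\nu)$ with $\nu \in \Lambda^\circ_{m-1,n-1}$. Multiplying by $e_{m-1,n-1}$ on the left converts each such summand to $H^{m-1}_{n-1} e_\nu$, giving left $H^{m-1}_{n-1}$-projectivity of $\mathbf{H}^{t_i^*}$.

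Finally, to obtain the two right-module projectivities, I will invoke the anti-automorphism ${}^*$: since $(\mathbf{H}^{t_i})^* \cong \mathbf{H}^{t_i^*}$ as bimodules (with sides swapped), left $H^m_n$-projectivity of $\mathbf{H}^{t_i}$ transfers to right $H^m_n$-projectivity of $\mathbf{H}^{t_i^*}$, and symmetrically left $H^{m-1}_{n-1}$-projectivity of $\mathbf{H}^{t_i^*}$ transfers to right $H^{m-1}_{n-1}$-projectivity of $\mathbf{H}^{t_i}$, completing the four required statements.
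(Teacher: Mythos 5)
Your proof is correct in substance, and it is considerably more explicit than the paper's own one-line justification, which simply cites \cref{exactadjoint}(1) and the definitions of the truncated bimodules. The paper leaves implicit the point you rightly identify as the crux: multiplying a biprojective $(K^m_n, K^{m-1}_{n-1})$-bimodule by $e_{m,n}$ and $e_{m-1,n-1}$ does not automatically yield a biprojective $(H^m_n, H^{m-1}_{n-1})$-bimodule, and one must verify that the relevant summands of $\mathbf{K}^{t_i}e_{m-1,n-1}$ are indexed by \emph{regular} weights so that applying $e_{m,n}$ lands in $H^m_n\text{-proj}$. Your decomposition via $\mathbf{K}^{t_i}e_{\lambda'} \cong G^{t_i}(P_{m-1,n-1}(\lambda'))$, \eqref{Gsimplestandard1}, and the cup-count argument showing $\lambda'\in\Lambda^\circ_{m-1,n-1}\Rightarrow(\lambda')^+\in\Lambda^\circ_{m,n}$ handles $\mathbf{H}^{t_i}$ cleanly; and your self-duality bypass for $\mathbf{H}^{t_i^*}$ (where no explicit formula for $G^{t_i^*}$ on projectives is available) is a nice observation. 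One could in fact run the self-duality argument uniformly in both directions, avoiding \eqref{Gsimplestandard1} entirely: since $G^{t_i}$ and $G^{t_i^*}$ are biadjoint exact functors they preserve both projectives and injectives, hence take projective-injective modules to projective-injective modules, which by \cref{prinj} are exactly the $P(\nu)$ with $\nu$ regular.

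One small imprecision: you assert that Krull--Schmidt forces \emph{each} indecomposable summand of $G^{t_i^*}(P_{m,n}(\lambda))$ to be self-dual. Krull--Schmidt only gives that the duality $^{\ostar}$ permutes the multiset of indecomposable summands. However, the conclusion you want still follows: since every summand $P(\nu)$ and its dual $P(\nu)^{\ostar}$ appear among the summands of a projective module, $P(\nu)^{\ostar}$ is projective, so $P(\nu) = (P(\nu)^{\ostar})^{\ostar}$ is also injective; thus each summand is projective-injective and \cref{prinj}(3) gives $\nu\in\Lambda^\circ_{m-1,n-1}$. You should replace ``each indecomposable summand is self-dual'' by ``each indecomposable summand is projective-injective.'' Your final transfer to the right module structures via the anti-automorphism $^*$ (flipping $\underline{\mu}\nu\, t_i\,\lambda\overline{\eta}\mapsto\underline{\eta}\lambda\, t_i^*\,\nu\overline{\mu}$, compatible with $e^*=e$) is valid, though it is worth stating explicitly that this gives a $\Bbbk$-linear isomorphism $\mathbf{H}^{t_i}\to\mathbf{H}^{t_i^*}$ intertwining left and right actions through the anti-automorphisms, from which right-projectivity follows since twisting a projective by an anti-automorphism preserves projectivity.
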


\begin{proof}
This follows directly from \cref{exactadjoint}(1) and  the definition of the bimodules $\mathbf{H}^{t_i}$ and $\mathbf{H}^{t_i^*}$.
\end{proof}

\begin{prop}\label{Gfiso} We have the following isomorphisms of functors.
\begin{enumerate}
\item  $\overline{G}^{t_i}f_{m-1,n-1} \cong f_{m,n} G^{t_i} \quad \mbox{and} \quad \overline{G}^{t_i^*}f_{m,n} \cong f_{m-1,n-1}G^{t_i^*}.$
\item  $G^{t_i}\tilde{g}_{m-1,n-1} \cong \tilde{g}_{m,n} \overline{G}^{t_i} \quad \mbox{and} \quad G^{t_i^*}\tilde{g}_{m,n} \cong \tilde{g}_{m-1,n-1}\overline{G}^{t_i^*}.$
\end{enumerate}
\end{prop}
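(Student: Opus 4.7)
The plan is to reformulate the four isomorphisms as bimodule identities induced by multiplication maps, and then verify them via a uniform projective-injectivity criterion. Unwinding the definitions, the first isomorphism of (2) reduces to showing that the multiplication map
\[
K^m_n e_{m,n} \otimes_{H^m_n} \mathbf{H}^{t_i} \;\longrightarrow\; \mathbf{K}^{t_i} e_{m-1,n-1}
\]
is an isomorphism of $(K^m_n, H^{m-1}_{n-1})$-bimodules. Setting $Q := \mathbf{K}^{t_i} e_{m-1,n-1}$, this is precisely the counit $\tilde{g}_{m,n} f_{m,n}(Q) \to Q$, which is an isomorphism whenever $Q$ is projective-injective as a left $K^m_n$-module: for an indecomposable summand $P(\mu)$ with $\mu \in \Lambda^\circ_{m,n}$ one has $\tilde{g}_{m,n} f_{m,n}(P(\mu)) = K^m_n e_{m,n} \otimes_{H^m_n} H^m_n e_\mu \cong K^m_n e_\mu = P(\mu)$, using $e_{m,n} e_\mu = e_\mu$. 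The second isomorphism of (2) is analogous, swapping $(m,n)$ with $(m-1,n-1)$ and $t_i$ with $t_i^*$. For (1), the isomorphisms reduce to $^*$-dual bimodule identities: since the anti-involution $^*$ on $K$-algebras swaps $\mathbf{K}^{t_i}$ and $\mathbf{K}^{t_i^*}$ with their left and right module structures exchanged and fixes the idempotents $e_{m,n}$ and $e_{m-1,n-1}$, the two isomorphisms of (1) are the $^*$-duals of those of (2) and follow from the right-module version of the counit criterion above.

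The core of the argument is thus to establish two projective-injectivity claims: $\mathbf{K}^{t_i} e_{m-1,n-1}$ is projective-injective as a left $K^m_n$-module, and $\mathbf{K}^{t_i^*} e_{m,n}$ is projective-injective as a left $K^{m-1}_{n-1}$-module. For the first, \cref{Gsimplestandard1} yields
\[
\mathbf{K}^{t_i} e_{m-1,n-1} \;=\; \bigoplus_{\la' \in \Lambda^\circ_{m-1,n-1}} G^{t_i}\bigl(P_{m-1,n-1}(\la')\bigr) \;=\; \bigoplus_{\la'} P_{m,n}(\la^+).
\]
Since inserting $\down \up$ at positions $i,i+1$ creates one additional cup in the cup diagram (joining this adjacent neighbour pair first in the cup-forming algorithm), $\la^+$ has $\min(m-1,n-1)+1 = \min(m,n)$ cups; hence $\la^+ \in \Lambda^\circ_{m,n}$ and each $P(\la^+)$ is projective-injective by \cref{prinj}. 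For the second claim, decompose $\mathbf{K}^{t_i^*} e_{m,n} = \bigoplus_{\mu \in \Lambda^\circ_{m,n}} G^{t_i^*}(P_{m,n}(\mu))$; each summand is projective by \cref{exactadjoint}(2), and self-dual since $P(\mu)^{\ostar} \cong P(\mu)$ by \cref{prinj} and $G^{t_i^*}$ commutes with $\ostar$ by \cref{exactadjoint}(3). A self-dual projective is projective-injective: writing it as $\bigoplus P(\eta)^{n_\eta}$ gives dual $\bigoplus I(\eta)^{n_\eta}$, so Krull--Schmidt forces each indecomposable summand $P(\eta)$ with $n_\eta > 0$ to be isomorphic to an indecomposable injective, whence $\eta \in \Lambda^\circ_{m-1,n-1}$ by \cref{prinj} again.

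The main obstacle is the projective-injectivity of $\mathbf{K}^{t_i^*} e_{m,n}$, particularly the Krull--Schmidt argument that a self-dual projective module over $K^{m-1}_{n-1}$ must be projective-injective; once these two claims are secured, the remaining steps are routine manipulations with the counit of $\tilde{g} \dashv f$ and the $^*$-duality interchanging left and right module structures.
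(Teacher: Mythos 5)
Your argument is correct, and it takes a genuinely different route from the paper's. The paper proves statement~(1) by simply citing the bimodule isomorphisms $e_{m,n}\mathbf{K}^{t_i} \cong \mathbf{H}^{t_i} \otimes_{H^{m-1}_{n-1}} e_{m-1,n-1}K^{m-1}_{n-1}$ and $e_{m-1,n-1}\mathbf{K}^{t_i^*} \cong \mathbf{H}^{t_i^*}\otimes_{H^m_n} e_{m,n}K^m_n$ from \cite[(3.20)]{MR2600694}, with (2) then following implicitly by applying the anti-involution $^*$ to these bimodule identities. You instead prove the bimodule isomorphisms from scratch in the opposite order: you establish~(2) directly by recognising each required map as a counit $\tilde{g}f(Q)\to Q$ on a module $Q$ and reducing to showing $Q$ is projective-injective, and then deduce~(1) by $^*$-duality. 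The two projective-injectivity claims you verify (using \cref{Gsimplestandard1} together with the observation that inserting an adjacent $\down\up$ pair adds exactly one cup, and using the self-duality of $G^{t_i^*}P(\mu)$ for $\mu\in\Lambda^\circ_{m,n}$) are exactly the content that Brundan--Stroppel's (3.20) packages up; your argument therefore gives a self-contained proof, at the cost of a few pages, and in the bargain identifies explicitly which projective-injectives the truncated bimodules decompose into. One very minor overcomplication: the ``self-dual projective implies projective-injective'' step does not need Krull--Schmidt at all --- over a finite-dimensional algebra the dual of a projective is injective, so $M\cong M^{\ostar}$ with $M$ projective immediately gives $M$ injective; the Krull--Schmidt refinement is only needed if you want to name the $\eta$'s appearing, which your argument does not ultimately require.
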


\begin{proof}
This follows directly from the isomorphisms of functors
\begin{align*}
e_{m,n}\mathbf{K}^{t_i}\otimes_{K^{m-1}_{n-1}} (-) \cong \mathbf{H}^{t_i} \otimes_{H^{m-1}_{n-1}} e_{m-1,n-1}(-)
&
 \, : \, K_{n-1}^{m-1}\!\! - \!\! {\rm mod} \rightarrow \Hmod , 
\\
 e_{m-1,n-1}\mathbf{K}^{t_i^*}\otimes_{K^{m}_{n}} (-) \cong \mathbf{H}^{t_i^*} \otimes_{H^{m}_{n}} e_{m,n}(-) 
 &
 \, : \, \Kmod \rightarrow H^{m-1}_{n-1}\!\! - \!\! {\rm mod}.
 \end{align*}
given in \cite[(3.20)]{MR2600694}.
\end{proof}

\begin{cor}\label{baradjoint} For all $X\in K_{n-1}^{m-1}\!\! - \!\! {\rm mod}$ and $Y\in K_n^m\!\! - \!\! {\rm mod}$
we have 
\begin{align*}
\Hom_{H_{n}^{m}}(f_{m,n}(Y),\overline{G}^{t_i}f_{m-1, n-1}(X)) 
&
\cong \Hom_{H_{n-1}^{m-1}}(\overline{G}^{t_i^*}f_{m,n}(Y), f_{m-1,n-1}(X))
\intertext{and}
 \Hom_{H_n^m}(\overline{G}^{t_i}f_{m-1, n-1}(X), f_{m,n}(Y))
  &\cong \Hom_{H_{n-1}^{m-1}}(f_{m-1,n-1}(X), \overline{G}^{t_i^*}f_{m,n}(Y)).
 \end{align*}
\end{cor}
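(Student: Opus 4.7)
The plan is to prove the first isomorphism by threading through the Schur functor to the extended arc algebra side, applying the biadjunction of $G^{t_i}$ and $G^{t_i^*}$ from Theorem~4.5(4), and then returning; and to derive the second isomorphism from the first by a duality argument.

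For the first isomorphism, I would carry out the following chain of natural isomorphisms. First, rewrite the target using Proposition~6.8(1): $\overline{G}^{t_i}f_{m-1,n-1}(X)\cong f_{m,n}G^{t_i}(X)$. This lets me apply the adjunction $(\tilde{g}_{m,n},f_{m,n})$ to pass from $\Hmod$ to $\Kmod$, yielding
\[
\Hom_{H^m_n}\!\bigl(f_{m,n}(Y),f_{m,n}G^{t_i}(X)\bigr)\cong\Hom_{K^m_n}\!\bigl(\tilde{g}_{m,n}f_{m,n}(Y),G^{t_i}(X)\bigr).
\]
Next I would apply Theorem~4.5(4), in the form where $G^{t_i^*}$ is left adjoint to $G^{t_i}$, to reach $\Hom_{K^{m-1}_{n-1}}(G^{t_i^*}\tilde{g}_{m,n}f_{m,n}(Y),X)$. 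Then the intertwining relation from Proposition~6.8(2), $G^{t_i^*}\tilde{g}_{m,n}\cong \tilde{g}_{m-1,n-1}\overline{G}^{t_i^*}$, converts this into $\Hom_{K^{m-1}_{n-1}}(\tilde{g}_{m-1,n-1}\overline{G}^{t_i^*}f_{m,n}(Y),X)$. A final application of the $(\tilde{g}_{m-1,n-1},f_{m-1,n-1})$-adjunction returns us to the $H^{m-1}_{n-1}$-setting and gives the desired RHS.

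For the second isomorphism, I would appeal to duality rather than redo the computation. All modules are finite-dimensional, so $\Hom_A(M,N)\cong \Hom_A(N^\ostar,M^\ostar)$. The functors $f$, $\overline{G}^{t_i}$, and $\overline{G}^{t_i^*}$ all commute with $\ostar$ (the first because $e_{m,n}^*=e_{m,n}$, the others by a direct analogue of Theorem~4.5(3) for the $\mathbf{H}$-bimodules, which follows immediately from the corresponding statement for $\mathbf{K}$). Hence
\[
\Hom_{H^m_n}\!\bigl(\overline{G}^{t_i}f_{m-1,n-1}(X),f_{m,n}(Y)\bigr)\cong \Hom_{H^m_n}\!\bigl(f_{m,n}(Y^\ostar),\overline{G}^{t_i}f_{m-1,n-1}(X^\ostar)\bigr).
\]
Applying the first isomorphism to $X^\ostar$ and $Y^\ostar$, then dualising back, yields the second statement.

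The proof is essentially a bookkeeping exercise in chaining adjunctions, so the main thing to keep straight is which adjunction is used at which step and which pair of modules is the input; there is no deep obstruction. The only place where care is needed is checking that the intertwining isomorphism from Proposition~6.8(2) applies to the module $f_{m,n}(Y)\in \Kmod$ (which it does, since Proposition~6.8(2) is a statement about the functors themselves, valid on every object), and that $\ostar$ genuinely commutes with $\overline{G}^{t_i}$ and $\overline{G}^{t_i^*}$, which is immediate from the definitions together with the corresponding fact on $\Kmod$.
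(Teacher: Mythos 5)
Your proof is correct and follows essentially the same route as the paper: the first isomorphism is established by the identical chain (intertwine via $\overline{G}^{t_i}f_{m-1,n-1}\cong f_{m,n}G^{t_i}$, apply the $(\tilde g_{m,n},f_{m,n})$-adjunction, the biadjunction of $G^{t_i}$ and $G^{t_i^*}$, then the intertwining $G^{t_i^*}\tilde g_{m,n}\cong\tilde g_{m-1,n-1}\overline{G}^{t_i^*}$, and the $(\tilde g_{m-1,n-1},f_{m-1,n-1})$-adjunction), and the second is deduced by duality, just as the paper does. The one minor difference is that you assert directly that $\overline{G}^{t_i}$ and $\overline{G}^{t_i^*}$ commute with $\ostar$, whereas the paper only ever needs (and invokes) that the composites $\overline{G}^{t_i}f_{m-1,n-1}\cong f_{m,n}G^{t_i}$ and $\overline{G}^{t_i^*}f_{m,n}\cong f_{m-1,n-1}G^{t_i^*}$ commute with duality, which is immediate from \cref{Gfiso} together with \cref{exactadjoint}(3) and self-adjointness of $e$; this weaker form suffices and avoids verifying a new bimodule self-duality for $\mathbf{H}^{t_i}$ itself.
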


\begin{proof}
We start with the first statement. We have that 
\begin{align*}
\Hom_{H_{n}^{m}}(f_{m,n}(Y),\overline{G}^{t_i}f_{m-1, n-1}(X))
& \cong   \Hom_{H_{n}^{m}}(f_{m,n}(Y),f_{m, n}G^{t_i}(X))   
\\
&\cong   \Hom_{K^m_n}(\tilde{g}_{m,n}f_{m,n}(Y), G^{t_i}(X)) 
\\
&\cong   \Hom_{K^{m-1}_{n-1}}(G^{t_i^*}\tilde{g}_{m,n}f_{m,n}(Y), X) 
\\
&   
\cong 
  \Hom_{K^{m-1}_{n-1}}(\tilde{g}_{m-1,n-1}\overline{G}^{t_i^*}f_{m,n}(Y), X)  
  \\
&\cong 
  \Hom_{H^m_n}(\overline{G}^{t_i^*}f_{m,n}(Y), f_{m-1,n-1}(X)) 
\end{align*}
where the first isomorphism follows by \cref{Gfiso}(1); 
the second   because  $\tilde{g}_{m,n}$ is left adjoint to $f_{m,n}$;
the third   because  $G^{t_i^*}$ is adjoint to $G^{t_i}$;
the fourth   using \cref{Gfiso}(2); 
the fifth because  $f_{m-1,n-1}$ is right adjoint to $\tilde{g}_{m-1,n-1}$.

Now the second statement follows from the first using \cref{Gfiso} and the fact that the functors $G^{t_i}, G^{t_i^*}, f_{m,n}$ and $f_{m-1,n-1}$ commute with duality.
%
\end{proof}

Using \cref{barGexact} and \cref{baradjoint} we deduce the following version of Shapiro's lemma.

\begin{cor}\label{Extadjoint}
For all $X\in K_{n-1}^{m-1}\!\! - \!\! {\rm mod}$, $Y\in K_n^m\!\! - \!\! {\rm mod}$ and all $k\geq 0$
we have 
$${\rm Ext}^k_{H_{n}^{m}}(f_{m,n}(Y),\overline{G}^{t_i}f_{m-1, n-1}(X)) \cong {\rm Ext}^k_{H_{n-1}^{m-1}}(\overline{G}^{t_i^*}f_{m,n}(Y), f_{m-1,n-1}(X))$$
and
$${\rm Ext}^k_{H_n^m}(\overline{G}^{t_i}f_{m-1, n-1}(X), f_{m,n}(Y)) \cong {\rm Ext}^k_{H_{n-1}^{m-1}}(f_{m-1,n-1}(X), \overline{G}^{t_i^*}f_{m,n}(Y)).$$
\end{cor}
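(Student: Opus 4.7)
The plan is to bootstrap the Hom-level adjunctions of \cref{baradjoint} into Ext-level ones by resolving the first argument in each statement by projectives. The key ingredients are already available: \cref{barGexact} guarantees that $\overline{G}^{t_i}$ and $\overline{G}^{t_i^*}$ are exact and send projectives to projectives, the Schur functors $f_{m,n}$ and $f_{m-1,n-1}$ are exact, and a standard argument shows that a Schur functor $f(M)=eM$ sends a projective $A$-module $Ae_\mu$ to the direct summand $eAe_\mu$ of $eAe=B$, hence to a projective $B$-module.

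For the first isomorphism, I would start by choosing a projective resolution $P_\bullet \to Y$ of $Y$ in $\Kmod$. Applying the exact, projective-preserving functor $f_{m,n}$ yields a projective resolution $f_{m,n}(P_\bullet)\to f_{m,n}(Y)$ in $\Hmod$. Computing Ext via this resolution gives
\[
{\rm Ext}^k_{H^m_n}\bigl(f_{m,n}(Y),\,\overline{G}^{t_i}f_{m-1,n-1}(X)\bigr)
\;=\;H^k\Bigl(\Hom_{H^m_n}\bigl(f_{m,n}(P_\bullet),\,\overline{G}^{t_i}f_{m-1,n-1}(X)\bigr)\Bigr).
\]
Applying the natural isomorphism of \cref{baradjoint} termwise (and using naturality so that it is compatible with the differentials) rewrites this as the cohomology of $\Hom_{H^{m-1}_{n-1}}(\overline{G}^{t_i^*}f_{m,n}(P_\bullet),\,f_{m-1,n-1}(X))$. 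Then I would observe that $\overline{G}^{t_i^*}f_{m,n}(P_\bullet)$ is a projective resolution of $\overline{G}^{t_i^*}f_{m,n}(Y)$ in $H^{m-1}_{n-1}\text{-mod}$: it is a resolution because $f_{m,n}$ and $\overline{G}^{t_i^*}$ are both exact, and each term is projective because both functors preserve projectives. Identifying the resulting cohomology as Ext gives the claim.

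For the second isomorphism I would run the dual argument: take a projective resolution $Q_\bullet\to X$ in $K^{m-1}_{n-1}\text{-mod}$, apply $f_{m-1,n-1}$ followed by $\overline{G}^{t_i}$ (both exact and projective-preserving) to obtain a projective resolution of $\overline{G}^{t_i}f_{m-1,n-1}(X)$ in $\Hmod$, then invoke the second half of \cref{baradjoint} termwise and pass to cohomology.

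The only subtle point—and hence the main obstacle I anticipate—is verifying that the Hom-level adjunctions of \cref{baradjoint} are natural in the projective-resolution variable, so that they assemble into a chain isomorphism rather than a collection of isomorphisms at each degree. This is immediate from the explicit construction of the adjunction isomorphisms in the proof of \cref{baradjoint} (they are composed of the tensor–Hom adjunctions between $f_{m,n},\tilde g_{m,n}$ and $G^{t_i},G^{t_i^*}$, all of which are natural), but it is the one place where care is required. Apart from that, both statements reduce to a direct application of the exactness and projective-preservation properties of the functors involved, exactly in the spirit of Shapiro's lemma.
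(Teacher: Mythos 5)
The overall strategy (resolve by projectives, apply the Hom-level adjunction termwise, pass to cohomology) is the right one, and indeed this is what the paper intends when it says ``we deduce the following version of Shapiro's lemma''. However, there is a genuine gap in your argument: the claim that the Schur functor $f_{m,n}$ sends projective $K^m_n$-modules to projective $H^m_n$-modules is \emph{false}, and this is the load-bearing step in your construction of a projective resolution. Indeed $f(P_{m,n}(\mu)) = eK^m_n e_\mu$ is a direct summand of $eK^m_n$ as a left $H^m_n$-module, not of $eK^m_n e = H^m_n$; it is projective precisely when $\mu\in\Lambda^\circ_{m,n}$. One can even see that $f$ \emph{cannot} preserve projectives in general: if $eK^m_n$ were projective as a left $H^m_n$-module, then since $H^m_n = eK^m_n e$ is a direct summand of it, $eK^m_n$ would be a progenerator and the cover isomorphism $K^m_n \cong \operatorname{End}_{H^m_n}(eK^m_n)$ would force $K^m_n$ and $H^m_n$ to be Morita equivalent --- impossible when $m,n\geq 1$, as $K^m_n$ has finite global dimension while $H^m_n$ is symmetric and non-semisimple. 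So $f_{m,n}(P_\bullet)$ is an exact complex but generally not a complex of projectives, and it cannot be used to compute ${\rm Ext}^\bullet_{H^m_n}(f_{m,n}(Y),-)$.

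The fix is to take the projective resolution in the $H$-world rather than pushing one forward from the $K$-world. Choose a projective resolution $Q_\bullet\to f_{m,n}(Y)$ of $H^m_n$-modules directly. Every indecomposable projective $H^m_n$-module is $f_{m,n}(P_{m,n}(\la))$ for some $\la\in\Lambda^\circ_{m,n}$, so each $Q_k = f_{m,n}(\tilde P_k)$ with $\tilde P_k\in\Kproj$; moreover, since $f_{m,n}$ is fully faithful on $\Kproj$ (the cover property), the differentials lift uniquely to maps $\tilde P_{k+1}\to\tilde P_k$. Now $\overline{G}^{t_i^*}(Q_\bullet)$ is a projective resolution of $\overline{G}^{t_i^*}f_{m,n}(Y)$ by \cref{barGexact}, each $Q_k$ is in the image of $f_{m,n}$ so \cref{baradjoint} applies termwise, and naturality (with respect to the lifted differentials $\tilde d_k$) makes these isomorphisms into a chain isomorphism. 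The rest of your argument then goes through unchanged, and the second isomorphism is handled symmetrically by resolving $f_{m-1,n-1}(X)$ in $H^{m-1}_{n-1}$-mod.
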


\section{$0$-faithfulness}

In this section we show that  $(K^m_n, f)$ is a $0$-faithful cover of $H^m_n$  if and only if   $n\neq m$. 
We will make use of  \cref{R0faithful}, which allows us to recast this question solely in terms of   tilting modules.

\begin{prop}\label{tiltingsequence} Assume $n\neq m$.
Then for all $\la \in \Lambda_{m,n}$, there is an exact sequence
$$0 \rightarrow T(\la) \rightarrow P^0 \rightarrow P^1$$
where $P^0$ and $P^1$ are projective-injective $K^m_n$-modules. 
\end{prop}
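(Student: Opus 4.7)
The proof will proceed by a double induction: an outer induction on $m+n$ (subject to $n \neq m$), and, for each fixed $(m,n)$, an inner induction on $\la \in \Lambda_{m,n}$ in the partial order, with base case the minimum weight $(m^n)$. The inductive step will use the projective functors $G^{t_i}$ of \cref{exactadjoint} to transport the exact sequence up from $K^{m-1}_{n-1}$, while the base case will come from dualising the projective resolution in \cref{projresol}, and will reduce to a combinatorial analysis that fails exactly when $n=m$.

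For the inductive step $\la > (m^n)$: write $\la = \la^-_{m,n}(i)$ so that $T_{m,n}(\la) = G^{t_i}(T_{m-1,n-1}(\la'))$. The outer induction applies to $K^{m-1}_{n-1}$ since $(n-1) - (m-1) = n - m \neq 0$, yielding an exact sequence $0 \to T(\la') \to P^0(\la') \to P^1(\la')$ with $P^j(\la')$ projective-injective. Applying the exact functor $G^{t_i}$ gives $0 \to T(\la) \to G^{t_i}P^0(\la') \to G^{t_i}P^1(\la')$. By \cref{Gsimplestandard1}, $G^{t_i}P_{m-1,n-1}(\mu') = P_{m,n}(\mu^+)$; the cup diagram of $\mu^+$ is obtained from that of $\mu'$ by inserting one additional cup at positions $i,i+1$, so $\mu' \in \Lambda^\circ_{m-1,n-1}$ (having $\min(m-1,n-1)$ cups) implies $\mu^+$ has $\min(m,n)$ cups, i.e.\ $\mu^+ \in \Lambda^\circ_{m,n}$. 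Hence $G^{t_i}$ preserves projective-injectivity by \cref{prinj}.

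For the inner base case $\la = (m^n)$: note $T_{m,n}((m^n)) = L_{m,n}((m^n))$ and that $(m^n)$ is itself regular (its cup diagram has $\min(m,n)$ cups). By \cref{projresol}, $\Delta((m^n)) = L((m^n))$ has a projective resolution $\cdots \to P_1 \to P_0 \to L((m^n)) \to 0$ with $P_k = \bigoplus_\mu p^{(k)}_{(m^n),\mu} P_{m,n}(\mu)$. Applying the exact contravariant duality $^{\ostar}$ (which sends projectives to injectives and fixes $L((m^n))$) yields an injective coresolution, whose first three terms form $0 \to L((m^n)) \to P_0^{\ostar} \to P_1^{\ostar}$. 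Now $P_0^{\ostar} = P((m^n))^{\ostar} = P((m^n))$ is projective-injective by \cref{prinj}, and $P_1^{\ostar}$ is projective-injective provided every $\mu$ with $p^{(1)}_{(m^n),\mu} \neq 0$ lies in $\Lambda^\circ_{m,n}$.

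The crux is therefore the combinatorial claim: for $n \neq m$, $p^{(1)}_{(m^n),\mu} \neq 0$ implies $\mu \in \Lambda^\circ_{m,n}$. Without loss of generality $m < n$ (using $K^m_n \cong K^n_m$ from \cref{rmkmn1}). By \cref{inverseKL}, such a $\mu$ must satisfy $(m^n) \to \mu$; writing $(m^n) = \down^n\up^m$, the constraint that the intermediate subword lie in $\Lambda^\circ_{t,t}$ forces $\mu$ to be obtained by swapping $\down_i$ with $\up_j$ where $j = 2n-i+1$ and $n-m+1 \leq i \leq n$. A direct computation of the function $\ell_t$ from \cref{regular} gives $\min_t \ell_t(\mu) = i - 2$, which is non-negative precisely when $i \geq 2$. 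For $n > m$ the constraint $i \geq n-m+1 \geq 2$ is automatic, so $\mu$ is regular. The main obstacle -- and the reason the proposition fails for $n=m$ -- is the boundary case $i=1$, which produces $\ell_1(\mu) = -1$ and a non-regular $\mu$; this case arises exactly when $n-m+1 = 1$, i.e.\ when $n = m$.
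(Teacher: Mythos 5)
Your proof is correct, and the inductive step is essentially the same as the paper's: reduce to $n>m$ via the isomorphism $K^m_n\cong K^n_m$, then for $\la>(m^n)$ apply the exact functor $G^{t_i}$ to the sequence obtained in $K^{m-1}_{n-1}$. The only difference there is your justification that $G^{t_i}$ preserves projective-injectives: you argue via \cref{Gsimplestandard1} and cup-counting, while the paper simply invokes \cref{exactadjoint}(2)--(3) (projectives to projectives, commutes with $^\ostar$), which is cleaner but equivalent.

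Where you genuinely depart from the paper is the base case $\la=(m^n)$. The paper identifies $P(m^n)\cong T(m^{n-m})$ and analyses $\soc_2$ directly via \cref{tilting}, \cref{soclestandard} and the uniserial structure of $\Delta(m^{n-m})$. You instead dualise the projective resolution of \cref{projresol} to get an injective coresolution, and show $P_1^\ostar$ is also projective by verifying $p^{(1)}_{(m^n),\mu}\neq 0\Rightarrow\mu\in\Lambda^\circ_{m,n}$ via the single-step analysis of \cref{inverseKL} and the $\ell_t$ criterion of \cref{regular}. This is in fact exactly the $k=1$ case of the claim the paper proves more generally inside the proof of \cref{lem1} (using \cref{littleclaim}), so your route reuses the same circle of ideas rather than the socle analysis. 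Both arguments are sound; the paper's is shorter at this step because it leverages the explicit tilting structure already built in \cref{tilting}, whereas yours is self-contained in the inverse-KL combinatorics and makes the failure at $n=m$ (via the $i=1$ boundary case) somewhat more transparent. One small imprecision: your stated formula $\min_t\ell_t(\mu)=i-2$ is only correct when $i\leq n-m+2$; in general the minimum is $\min(i-2,n-m)$. This does not affect the conclusion since $n-m>0$ is assumed, so the only possible obstruction is $i-2<0$, and the constraint $i\geq n-m+1\geq 2$ rules that out.
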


\begin{proof}
Using \cref{rmkmn1,rmkmn2}, we can assume  that $n>m$.
For $\la = (m^n)$ we have $T(m^n) = L(m^n)$. As $(m^n)\in \Lambda_{m,n}^\circ$, using \cref{prinj} we have $P(m^n)^{\ostar} \cong P(m^n)$  and so we get an exact sequence
$$0 \rightarrow L(m^n) \rightarrow P(m^n)$$
where $P(m^n)$ is projective-injective. We claim that 
$$
\soc(P(m^n)/L(m^n)) = \soc_2 P(m^n)
 = \bigoplus_{\mu \in \Gamma} L(\mu) \quad \mbox{for some $\Gamma \subseteq \Lambda_{m,n}^\circ$}.$$
Taking $P^0 = P(m^n)$ and $P^1 = \oplus_{\mu \in \Gamma}P(\mu)$ would then prove  the result for $\la = (m^n)$, once we verify the claim.

To prove the claim, we first observe that $P(m^n)  = T(m^{n-m})$. To see this, note that $P(m^n)$ is self-dual and has a standard filtration,  it is therefore an indecomposable tilting module. Moreover, using \cref{BH} and \cref{decnumbers}, we have that 
 the maximal $\mu$ such that 
 $$n_{\mu(m^n)}(1)= (P(m^n):\Delta(\mu))\neq0$$is given by $\mu=(m^{n-m})$,
and  therefore    $\Delta(m^{n-m}) \subseteq P(m^n)\cong  T(m^{n-m})$.  
We re-emphasise the fact that $\soc T(m^{n-m}) \cong L(m^n) \cong \soc \Delta(m^{n-m})$.  
Now using \cref{tilting} parts (2) and~(3) and \cref{soclestandard} 
 we have that 
$$
\soc_2 T(m^{n-m})\subseteq 
\soc_2 \Delta(m^{n-m}) \oplus \bigoplus _{\mu }
\soc \Delta(\mu)
\cong \soc_2 \Delta(m^{n-m}) \oplus \bigoplus _{\mu }L(\mu^\circ)$$
where both sums are over all $\mu\in \Lambda_{m,n}$ such that  $n_{ \mu (m^n) }(1)\neq0$. 
We have that $\mu^\circ \in \Lambda_{m,n}^\circ$ by \cref{lambdacirc}.  
Now, using \eqref{uniserial2}, we have that  $\Delta(m^{n-m})$ is uniserial of length $m+1$ with 
$$\rad_t \Delta(m^{n-m})=\soc_{m+1-t} \Delta(m^{n-m})
 \cong L(m^{n-m}, t^t)$$ for $0\leq t\leq m$. 
In particular, for $t=m-1$ this gives 
$$\soc_2 \Delta(m^{n-m}) \cong L(m^{n-m}, (m-1)^{m-1}).$$ Now, as $n>m$ we have $(m^{n-m}, (m-1)^{m-1})\in \Lambda_{m,n}^\circ$. This completes the proof of the claim and hence proves the result when $\la  =(m^n)$.

Now let $\la > (m^n)$ then we have $\la = \la^- \in \Lambda^{\up \down}_{m,n}(i)$ for some $1\leq i<m+n$. 
We can assume by induction that we have an exact sequence
$$0 \rightarrow T_{m-1,n-1}(\la') \rightarrow Q^0 \rightarrow Q^1$$
where $Q^0$ and $Q^1$ are projective-injective $K^{m-1}_{n-1}$-modules. Applying the exact functor $G^{t_i}$ we get an exact sequence
$$0 \rightarrow T_{m,n}(\la) \rightarrow G^{t_i}(Q^0) \rightarrow G^{t_i}(Q^1).$$
As $G^{t_i}$ takes projectives to projectives, and   commutes with the duality
 (see \cref{exactadjoint} parts (2) and (3)), both $G^{t_i}(Q^0)$ and $G^{t_i}(Q^1)$ are projective-injective $K^m_n$-modules as required. 
\end{proof}

\begin{thm}\label{0faithful} 
Assume $n\neq m$. Then  $(K^m_n, f)$ is a $0$-faithful cover of $H^m_n$, i.e.
$$\Hom_{K^m_n}(M,M') \cong \Hom_{H^m_n}(fM, fM')\quad
\mbox{for all $M, M'\in (\Kmod)^\Delta$.}$$
\end{thm}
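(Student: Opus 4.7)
My plan is to use the third equivalent characterisation in \cref{R0faithful}: to show $0$-faithfulness, it suffices to verify that the unit map $\eta(T) : T \to gf(T)$ is an isomorphism for every indecomposable tilting module $T = T(\la)$, $\la \in \Lambda_{m,n}$. This is where \cref{tiltingsequence} becomes decisive: because $n \neq m$, each tilting module sits inside an exact sequence
$$0 \longrightarrow T(\la) \longrightarrow P^{0} \longrightarrow P^{1}$$
with $P^{0}, P^{1}$ projective-injective in $K^m_n\text{-mod}$.

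Next, I would apply the functor $gf$ to this sequence. Since $f$ is exact and $g$ is left exact (as recalled in \cref{details}), the composite $gf$ is left exact, so we obtain an exact sequence
$$0 \longrightarrow gf(T(\la)) \longrightarrow gf(P^{0}) \longrightarrow gf(P^{1}).$$
Naturality of the unit $\eta$ gives a commutative diagram with these two sequences as rows and vertical maps $\eta(T(\la))$, $\eta(P^{0})$, $\eta(P^{1})$. Because $(K^m_n, f)$ is a cover of $H^m_n$ (recalled just before \cref{tiltingsequence}), the map $\eta$ is an isomorphism on any projective module; in particular $\eta(P^{0})$ and $\eta(P^{1})$ are isomorphisms. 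A standard diagram chase (essentially the 5-lemma applied after extending the sequences by zero on the left) then forces $\eta(T(\la))$ to be an isomorphism as well.

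Having established this for every indecomposable tilting, the third bullet of \cref{R0faithful} gives $0$-faithfulness, which by definition is precisely the asserted Hom-isomorphism for all $M, M' \in (K^m_n\text{-mod})^\Delta$. The only subtle point in the argument is genuinely the existence of the projective-injective copresentation of $T(\la)$, which is the content of \cref{tiltingsequence} and is the place where the hypothesis $n \neq m$ is used; the remainder of the argument is a formal consequence of the cover property together with exactness.
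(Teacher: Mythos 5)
Your proof is correct and follows essentially the same strategy as the paper's: reduce via \cref{R0faithful} to showing $\eta$ is an isomorphism on indecomposable tilting modules, invoke the projective-injective copresentation of \cref{tiltingsequence} (which is where $n\neq m$ enters), apply the left-exact composite $gf$, and conclude via the commutative diagram that $\eta(T(\la))$ is an isomorphism. The only cosmetic difference is that you invoke the five-lemma (after padding with zeros), while the paper closes with a dimension count on kernels; both are standard and equivalent here.
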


\begin{proof}
We will use \cref{R0faithful} and prove that the counit $\eta$ is an isomorphism when evaluated on tilting modules. Clearly it is enough to prove this for all indecomposable tilting modules. 
Applying $gf$ to the exact sequence given in \cref{tiltingsequence} we get a commutative diagram with exact rows
$$
	   \begin{tikzpicture} [scale=0.8]
  \draw(-3.5,0) 	 node {$0$};	
    \draw(0,0) 	 node {$ T(\la)$};
  \draw(4,0) 	 node {$  P^0$};    
  \draw(8,0) 	 node {$  P^1$};

  \draw(-3.5,-3) 	 node {$0$};	
    \draw(0,-3) 	 node {$ gfT(\la)$};
  \draw(4,-3) 	 node {$ gf P^0$};    
  \draw(8,-3) 	 node {$gf  P^1$};

\draw[->](1,-3)--(3,-3);
\draw[->](-3,-3)--(-1,-3);
\draw[->](1+4,-3)--(3+4,-3);
\draw[->](1,0)--(3,0);
\draw[->](-3,0)--(-1,0);
\draw[->](1+4,0)--(3+4,0) ;
\draw(6,0.3) node {\scalefont{0.9}$\vartheta$};

\draw[->](0,-0.5)--(0,-2.5);\draw[->](0+4,-0.5)--(0+4,-2.5);\draw[->](0+4+4,-0.5)--(0+4+4,-2.5);

\draw(0,-1.5) node[right] {\scalefont{0.9}$\eta(T(\la))$};
\draw(4,-1.5) node[right] {\scalefont{0.9}$\eta(P^0)$};
\draw(8,-1.5) node[right] {\scalefont{0.9}$\eta(P^1)$};
	\end{tikzpicture}	$$ 
	As the map $\eta(P^0)$ is an isomorphism we have that the map $\eta(T(\la))$ must be injective. Moreover, as $\eta(P^0)$ and $\eta(P^1)$ are both isomorphisms, we have 
 $$\dim (T(\la)) = \dim ({\rm Ker}(\vartheta)) = \dim( {\rm Ker}( gf\vartheta) )= \dim (gfT(\la))$$
 and so $\eta(T(\la))$ must be an isomorphism.
\end{proof}

 \begin{cor}
The extended arc algebras   $K^m_n $ are $0$-faithful covers of the 
Khovanov arc algebras $H^m_n $ if and only if $m\neq n$.  
\end{cor}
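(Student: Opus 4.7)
The forward direction of the corollary is precisely \cref{0faithful}; the plan is therefore to establish its converse, namely that $(K^m_m, f)$ fails to be $0$-faithful for every $m \geq 1$. By \cref{R0faithful} it suffices to exhibit a single tilting $K^m_m$-module $T$ for which the counit $\eta(T) : T \to gf(T)$ is not an isomorphism. The natural candidate is $T := T((m^m))$: since $(m^m)$ is the minimum element of $\Lambda_{m,m}$, this module coincides with $\Delta((m^m)) = L((m^m))$, which is simple and one-dimensional. As $(m^m) \in \Lambda_{m,m}^\circ$, we have $fT = D((m^m)) \neq 0$, so $\eta(T)$ is a nonzero map out of a one-dimensional space. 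The whole task therefore reduces to showing $\dim gf(T) \geq 2$.

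To achieve this, I would decompose $eK^m_m = \bigoplus_{\mu \in \Lambda_{m,m}} f P(\mu)$ as a direct sum of left $H^m_m$-modules and identify two distinct summands whose heads contain $D((m^m))$. The first contribution is immediate: for $\mu = (m^m) \in \Lambda_{m,m}^\circ$, the module $f P((m^m))$ is projective indecomposable with head $D((m^m))$. The second, more interesting, contribution comes from $\mu = \varnothing$. Since $\varnothing$ is the maximum element of $\Lambda_{m,m}$ we have $P(\varnothing) = \Delta(\varnothing)$, and by \cref{uniserial} this is uniserial of length $m+1$ with composition factors $L((t^t))$ for $0 \leq t \leq m$. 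The crucial combinatorial observation is that when $m = n$ the only weight of the form $(t^t)$ lying in $\Lambda_{m,m}^\circ$ is $(m^m)$ itself, because the containment of the staircase $(m, m-1, \ldots, 1)$ forces $t \geq m$. Applying the exact Schur functor $f$ therefore annihilates all composition factors of $\Delta(\varnothing)$ except the socle, yielding $f\Delta(\varnothing) \cong D((m^m))$, whose head is again $D((m^m))$.

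Summing the two contributions gives $\dim gf(T) = \dim \Hom_{H^m_m}(eK^m_m, D((m^m))) \geq 2$, so $\eta(T)$ cannot be surjective, hence not an isomorphism. By \cref{R0faithful} this shows that $(K^m_m, f)$ is not a $0$-faithful cover. The conceptual content of the argument is the coincidence that the socle of $\Delta(\varnothing)$ equals the simple at the minimum weight precisely when $m = n$; for $m \neq n$ these two simples are distinct, and this separation is what powers the inductive construction in \cref{tiltingsequence} and hence the positive result \cref{0faithful}.
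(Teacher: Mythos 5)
Your proof is correct, and the forward direction coincides with the paper's (both cite \cref{0faithful}). For the converse you take a genuinely different but equally valid route. The paper proceeds directly from the definition of $0$-faithfulness: it observes that $S_{m,m}(\varnothing) \cong D_{m,m}((m^m)) \cong S_{m,m}((m^m))$, so $\Hom_{H^m_m}(f\Delta(\varnothing), f\Delta((m^m)))$ is nonzero, whereas $\Hom_{K^m_m}(\Delta(\varnothing), \Delta((m^m))) = 0$ because $\rad_0 \Delta(\varnothing) = L(\varnothing)$ is not a composition factor of $\Delta((m^m)) = L((m^m))$; this exhibits a pair of $\Delta$-filtered modules on which the Schur functor fails to preserve Hom. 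You instead invoke the tilting characterization in \cref{R0faithful} and show that $\eta(T)$ is not surjective for the single tilting $T = T((m^m)) = L((m^m))$, by computing that $gf(T) = \Hom_{H^m_m}(eK^m_m, D((m^m)))$ is at least two-dimensional thanks to the separate contributions of $fP((m^m))$ and $fP(\varnothing) = f\Delta(\varnothing)$. Both arguments hinge on the same combinatorial fact --- that when $m = n$ the Schur functor collapses $\Delta(\varnothing)$ onto the simple at the minimum weight, because the only square $(t^t)$ containing the staircase inside an $m \times m$ box is $(m^m)$ itself --- but package it differently. The paper's counterexample is slightly more immediate; your version requires a bit more machinery (the counit criterion and the decomposition of $eK^m_m$) but has the virtue of cleanly localizing the failure of faithfulness at a single tilting module and of making explicit the conceptual link to the inductive argument in \cref{tiltingsequence} that the positive result for $m \neq n$ rests on.
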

 
 \begin{proof}One direction is immediate from \cref{0faithful}.  To see that 0-faithfulness fails when $m=n$, we observe that $ S_{m,m}(m^m) \cong D_{m,m}(m^m)\cong S_{m,m}$ whereas 
 \begin{align*}
 \Hom_{K^m_m }(\Delta_{m,m}(\varnothing), \Delta_{m,m}(m^m))=0
  \end{align*} 
since $\rad_0(\Delta_{m,m}(\varnothing))=L_{m,m}(\varnothing)$ is not a composition factor of 
$\Delta_{m,m}(m^m))$ by \cref{decnumbers}.
 \end{proof}

%

\section{$(|n-m|-1)$-faithfulness}

We are now ready to prove the main result of the paper: that the extended arc algebras   $K^m_n $ are $(|n-m|-1)$-faithful covers of the
Khovanov arc algebras $H^m_n $. 
Throughout this section we assume without loss of generality that $n>m$.

\begin{lem}\label{lem1}
For all $0\leq j < n-m$ we have
\begin{align}\label{(1)}{\rm Ext}^j_{H^m_n}(D(m^m), D(m^n)) = 0.
\intertext{For all $0<j<n-m$ and $\la \in \Lambda_{m,n}$ we have }
 \label{(2)} {\rm Ext}^j_{H^m_n}(D(m^m), fT(\la)) = 0.
 \end{align}
\end{lem}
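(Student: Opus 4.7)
My plan is to show that $D(m^m)$ is a simple projective $H^m_n$-module, from which both statements of the lemma follow immediately. Indeed, for part (1) at $j=0$ we have $\Hom_{H^m_n}(D(m^m),D(m^n))=0$ since $D(m^m)\not\cong D(m^n)$ as distinct simple modules (because $m\neq n$). For $j\geq 1$ in both parts, the vanishing ${\rm Ext}^j_{H^m_n}(D(m^m),N)=0$ is automatic for any module $N$ by projectivity of $D(m^m)$.

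The first step is to identify $D(m^m)$ with $fP(\varnothing)$. Since $\varnothing$ is the maximum weight in $\Lambda_{m,n}$, we have $P(\varnothing)=\Delta(\varnothing)$, which by \cref{uniserial} is uniserial with composition factors $L(t^t)$ for $0\leq t\leq m$. Because $f$ is exact with $fL(\nu)=D(\nu)$ for $\nu\in\Lambda_{m,n}^\circ$ and $fL(\nu)=0$ otherwise, applying $f$ to the radical filtration of $\Delta(\varnothing)$ picks out exactly the regular composition factors.

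The second step is to show that under the standing hypothesis $n>m$, the weight $(t^t)$ is regular if and only if $t=m$. By \cref{soclestandard}, the fact that $L(t^t)$ sits in $\rad_t\Delta(\varnothing)$ is equivalent to $\underline{(t^t)}\varnothing$ being oriented of degree exactly $t$. However, in $\varnothing=\uparrow^m\downarrow^n$ every cup of $\underline{(t^t)}$ must have its left endpoint among positions $\{1,\ldots,m\}$ (labelled $\uparrow$) and its right endpoint among $\{m+1,\ldots,m+n\}$ (labelled $\downarrow$); otherwise the cup would receive labels $\uparrow\uparrow$ or $\downarrow\downarrow$ and fail to be oriented. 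Hence every cup of $\underline{(t^t)}$ is clockwise with respect to $\varnothing$, and the orientation degree of $\underline{(t^t)}\varnothing$ equals the total number of cups in $\underline{(t^t)}$. This shows that $(t^t)$ has defect exactly $t$, and so is regular precisely when $t=\min(m,n)=m$. Therefore $f\Delta(\varnothing)\cong D(m^m)$ is simple, and since $f$ preserves projectivity, the module $D(m^m)=fP(\varnothing)$ is moreover projective.

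The main obstacle is the defect computation for $(t^t)$; the reduction of the lemma to this computation is entirely formal. The key combinatorial input is that orientedness of $\underline{(t^t)}\varnothing$ forces every cup of $\underline{(t^t)}$ to straddle the boundary between positions $m$ and $m+1$ of $\varnothing$, which makes the cup count match the orientation degree coming from \cref{soclestandard}.
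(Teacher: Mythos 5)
The combinatorial part of your argument is correct: $P(\varnothing)=\Delta(\varnothing)$ is uniserial with factors $L(t^t)$, $0\leq t\leq m$, and $(t^t)$ is regular precisely when $t=m$ (indeed, $(t^t)$ contains the staircase $(m,m-1,\dots,1)$ iff its first row $t$ is at least $m$). So $f\Delta(\varnothing)\cong D(m^m)$ is simple. The fatal step is the assertion that ``$f$ preserves projectivity'': this is false. The Schur functor $f=e(-)$ takes $P(\la)=K^m_n e_\la$ to the $H^m_n$-module $eK^m_n e_\la$, which is projective over $H^m_n=eK^m_ne$ only when $e_\la$ is a summand of $e$, i.e.\ when $\la\in\Lambda_{m,n}^\circ$. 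Since $\varnothing\notin\Lambda_{m,n}^\circ$ (for $m\geq 1$), you cannot conclude $fP(\varnothing)$ is projective. In fact $D(m^m)$ is \emph{not} projective: by Brauer--Humphreys reciprocity and \cref{decnumbers}, $\underline{(m^m)}\varnothing$ is oriented, so $\Delta(\varnothing)$ occurs in a $\Delta$-filtration of $P(m^m)$, whence $fP(m^m)$ contains $D(m^m)$ as a composition factor at least twice and is not simple; since $fP(m^m)$ is the projective cover of $D(m^m)$, the simple $D(m^m)$ cannot be projective.

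Note also that if $D(m^m)$ were projective, the lemma would hold with no bound on $j$ at all, whereas the paper proves only $j<n-m$, and indeed the proof in the paper shows via the inverse Kazhdan--Lusztig polynomial identity $p_{(m^n)(m^m)}(q)=q^{m(n-m)}$ that the homological distance between $D(m^n)$ and $D(m^m)$ is exactly $m(n-m)$, which is finite and positive. The paper's actual argument uses the projective resolution of $\Delta(m^n)$ from \cref{projresol}, the combinatorial \cref{littleclaim} to show the first $n-m$ terms of this resolution remain projective after applying $f$, and then \eqref{inverseKL1} to control when $D(m^m)$ appears in the head; part (2) is handled separately by induction via the projective functors. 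Your proposal replaces all of this by a single false claim, so the gap cannot be patched without essentially reproducing the paper's line of reasoning.
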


\begin{proof}
We first verify \eqref{(1)}. As the simple modules are self-dual we have 
$${\rm Ext}^j_{H^m_n}(D(m^m), D(m^n)) \cong {\rm Ext}^j_{H^m_n}(D(m^n), D(m^m)).$$ 
We will show that the latter is zero for $0\leq j <n-m$. Consider the projective resolution of $\Delta(m^n) = L(m^n)$ given in \cref{projresol} 
$$\cdots \rightarrow P_2 \rightarrow  P_1 \rightarrow P_0 = P(m^n) \rightarrow L(m^n) \rightarrow 0.$$
We claim that $P_k$ is a direct sum of $P(\mu)$'s for $\mu \in \Lambda_{m,n}^\circ$ for all $0\leq k \leq n-m$. Using \cref{inverseKL2}, we have that if $P(\mu)$ is a direct summand of $P_k$ then we have 
$$(m^n)=\la_0 \rightarrow \la_1 \rightarrow \ldots \rightarrow \la_k = \mu.$$
Now using   \cref{littleclaim}  we have 
\begin{eqnarray*}
\min \{ \ell_h(\la_k) \, : \, \mbox{$h$  is labelled by $\up$ in $\la_k$}\} &\geq & \min \{ \ell_h(m^n) \, : \, \mbox{$h$  is labelled by $\up$ in $(m^n)$}\}  -k \\
&=& n-m-k \geq 0
\end{eqnarray*}
for $0\leq k \leq n-m$. Finally note that $\ell_h(\la_k)\geq 0$ for all $1\leq h\leq m+n$ if and only if this is the case for all $h$ labelled by $\up$ in $\la_k$. This proves the claim that $\mu = \la_k\in \Lambda_{m,n}^\circ$. 

Now applying the Schur functor to the projective resolution above we get an exact sequence
$$\cdots \rightarrow fP_2 \rightarrow  fP_1 \rightarrow fP_0 = fP(m^n) \rightarrow D(m^n) \rightarrow 0.$$
where $fP_k$ is a projective $H^m_n$-module for each $0\leq k\leq n-m$.
By elementary homological algebra we have 
$$ {\rm Ext}^t_{H^m_n}(D(m^n), D(m^m)) = 0 \quad 
 \Leftrightarrow \quad {\rm Ext}^j_{H^m_n}(fP_k, D(m^m))= 0 \quad \mbox{for all $j+k\leq t$}.$$
If $j\geq 1$ and $k\leq n-m$ we have ${\rm Ext}^j_{H^m_n}(fP_k, D(m^m))=0$ as $fP_k$ is projective. 
If $j=0$ then $\Hom_{H^m_n}(fP_k, D(m^m))=0$ precisely when $D(m^m)$ does not appear in the head of $fP_k$.  Now we have 
$$\rad_0 fP_k= \bigoplus_{\mu}p_{(m^n)\mu}^{(k)}D(\mu)$$
and we have $p_{(m^n)(m^m)}^{(k)}\neq 0$ if and only if $k=m(n-m)$ using \cref{inverseKL1}. Thus we have 
$\Hom_{H^m_n}(fP_k, D(m^m))=0$ for all $0\leq k < n-m$ as required.

We now verify \eqref{(2)}. If $\la =(m^n)$ is minimal then $T(m^n)=L(m^n)$ and so $fT(m^n)=D(m^n)$ and the result holds by \eqref{(1)}. Now assume that $\la$ is not minimal, then $\la \in \Lambda_{m,n}^{\up \down}(i)$ for some $i\in \ZZ_{>0}$. Let $\la'\in \Lambda_{m-1, n-1}$ be the weight obtained from $\la$ by removing the symbols in position $i$ and $i+1$. Then we have $T(\la) = G^{t_i}(T(\la'))$. Now, using \cref{Gfiso}(1), \cref{Extadjoint} and \cref{Gsimplestandard4} we have 
\begin{eqnarray*} {\rm Ext}^j_{H^m_n}(D(m^m), fT(\la)) &\cong&  {\rm Ext}^j_{H^m_n}(D(m^m), \overline{G}^{t_i}fT(\la')) \\
&\cong & {\rm Ext}^j_{H^{m-1}_{n-1}}(\overline{G}^{t_i^*}fL(m^m), fT(\la')) \\
&\cong & {\rm Ext}^j_{H^{m-1}_{n-1}}(fG^{t_i^*}L(m^m), fT(\la')) \\
&\cong & \left\{ \begin{array}{ll}  {\rm Ext}^j_{H^{m-1}_{n-1}}(D((m-1)^{m-1}), fT(\la'))  & \mbox{if $i=m$}\\ 0 &  \mbox{if $i\neq m$} \end{array}\right. \\
&=& 0 
\end{eqnarray*}
for $0<j<n-m$ by induction.
\end{proof}

\begin{lem}\label{lem2} Assume $n>m$ and $\la \neq \varnothing$  we have that 
$$\Hom_{H^m_n}(D(m^m), S(\la)) = 0.$$
\end{lem}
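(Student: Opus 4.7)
The plan is to first use the $0$-faithfulness of $(K^m_n, f)$ established in \cref{0faithful} (which applies because $n > m$) to translate the problem into a question about simple and standard modules for $K^m_n$. Since $(m^m)$ is regular we have $D(m^m) = fL(m^m)$, and of course $S(\la) = f\Delta(\la)$.  By \cref{R0faithful}, $0$-faithfulness implies that the counit $\eta(\Delta(\la)): \Delta(\la) \to gf\Delta(\la)$ is an isomorphism; combined with the $(f,g)$-adjunction this yields
\[
\Hom_{H^m_n}(D(m^m), S(\la)) \;\cong\; \Hom_{K^m_n}(L(m^m),\, gf\Delta(\la)) \;\cong\; \Hom_{K^m_n}(L(m^m),\, \Delta(\la)).
\]

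Since $L(m^m)$ is simple, the right-hand side is non-zero if and only if $L(m^m)$ embeds into $\soc \Delta(\la)$, which by \cref{soclestandard} equals $L(\la^\circ)$. So the lemma will reduce to the purely combinatorial claim that $\la^\circ = (m^m)$ forces $\la = \varnothing$; the reverse direction is a direct verification using \cref{lambdacirc}.

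To establish this combinatorial claim, I will exploit the structure of the cup diagram $\underline{(m^m)}$, which, writing $(m^m) = \down^m \up^m \down^{n-m}$, consists of $m$ nested cups pairing position $i$ with position $2m+1-i$ for $1 \leq i \leq m$, together with $n-m$ rays on positions $2m+1,\dots,m+n$. Each cup in the construction of $\la^\circ$ in \cref{lambdacirc} uses exactly one $\up$ from $\la$, and there are precisely $m$ cups and $m$ such $\up$'s, so every $\up$ of $\la$ must occur in positions $1,\dots,2m$. The nested structure of $\underline{(m^m)}$ then forces the set $U \subseteq \{1,\dots,2m\}$ of $\up$-positions of $\la$ to be a transversal of the pairs $\{i,\, 2m+1-i\}$ for $1 \leq i \leq m$.

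The main obstacle is to rule out every transversal other than $U = \{1,\dots,m\}$ (which corresponds to $\la = \varnothing$), and my plan is to do this by a direct analysis of Step~1 in the construction, namely the greedy parenthesis-matching of adjacent $\up\down$-pairs. Let $i_0$ be the smallest element of $\{1,\dots,m\}$ with $i_0 \notin U$. If $i_0 \geq 2$, then positions $1,\dots,i_0-1$ of $\la$ are all $\up$ while position $i_0$ is a $\down$, so the parenthesis-matching of Step~1 produces the cup $(i_0 - 1,\, i_0)$, which is not among the cups of $\underline{(m^m)}$ --- a contradiction. If instead $i_0 = 1$, then the transversal condition places an $\up$ at position $2m$; since positions $2m+1,\dots,m+n$ of $\la$ are all $\down$, Step~1 then produces the cup $(2m,\, 2m+1)$, which again is not a cup of $\underline{(m^m)}$. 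Either way we reach a contradiction, so $U = \{1,\dots,m\}$ and $\la = \varnothing$, as required.
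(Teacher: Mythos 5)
Your proof is correct, and it departs from the paper's proof in the first half while converging with it in the second. Where the paper reduces to the combinatorial claim by embedding $\Delta(\la)$ into the projective--injective $P(\la^\circ)$, applying $f$, and using that $fP(\la^\circ)$ is indecomposable projective--injective over the symmetric algebra $H^m_n$ (so has simple socle $D(\la^\circ)$), you instead invoke the $0$-faithfulness of $(K^m_n,f)$ from \cref{0faithful} together with the $(f,g)$-adjunction: $\Hom_{H^m_n}(fL(m^m), f\Delta(\la)) \cong \Hom_{K^m_n}(L(m^m), gf\Delta(\la)) \cong \Hom_{K^m_n}(L(m^m), \Delta(\la))$. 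Both reductions are valid, and since \cref{0faithful} precedes this lemma, leaning on it is legitimate; your route is arguably cleaner, at the price of invoking a heavier earlier theorem, whereas the paper's argument is more self-contained and only uses \cref{soclestandard} and \cref{prinj}. For the combinatorial claim that $\la^\circ=(m^m)$ forces $\la=\varnothing$, your transversal set-up and case analysis on the smallest $i_0 \notin U$ is a well-organized rephrasing of the same phenomenon the paper exploits (they argue that any anticlockwise cup propagates outward to make $(1,2m)$ anticlockwise, forcing $2m$ to be $\up$ and $2m+1$ to be $\down$, which would then have to be Step~1 cupped). Your $i_0 \geq 2$ case is an extra case that the paper's nesting argument handles implicitly, but it is not a gap in either write-up --- the two proofs are logically equivalent here and both are correct.
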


\begin{proof}
By \cref{soclestandard} we have that $\soc \Delta(\la)=L(\la^\circ)$ and by \cref{prinj} 
we have that $P(\la^\circ)$ is projective-injective; therefore 
we have an embedding $\Delta(\la) \hookrightarrow P(\la^\circ)$. Applying the Schur functor $f$ we get 
$$S(\la) \hookrightarrow fP(\la^\circ).$$
Now, as $\la^\circ \in \Lambda_{m,n}^\circ$, we have that $fP(\la^\circ)$ is indecomposable projective-injective and hence has simple socle $D(\la^\circ)$. It remains to show that if $\la^\circ = (m^m)$ then we must have $\la = \varnothing$. Now, the cup diagram $\underline{(m^m)}$ is given by $m$ concentric cups, followed by $n-m$ rays (see \cref{Figlem2}). We claim that when $\la^\circ = \underline{(m^m)}$, all cups in $\underline{\la^\circ}\la$ must be clockwise. This would imply that $\la = \varnothing$. 
Suppose, for a contradiction that $\underline{\la^\circ}\la$ has at least one anti-clockwise cup. Then, by construction, we must have that the outermost cup, connecting vertices $1$ and $2m$, must be anti-clockwise. This means that the vertices $2m$ and $2m+1$ of $\la$ are labelled with $\up$ and $\down$, respectively (see \cref{Figlem2}). But this contradicts the fact that $\la^\circ = (m^m)$ as the vertices $2m$ and $2m+1$ should be connected with a cup in $\la^\circ$. \end{proof}

 
 \begin{figure}[ht!]
 $$   \begin{tikzpicture} [scale=0.75]

		\path (4,1) coordinate (origin); 
 		\path (origin)--++(0.5,0.5) coordinate (origin2);  
	 	\draw(origin2)--++(0:1.25) coordinate (origin3); 
	 	\draw[densely dotted](origin3)--++(0:.5) coordinate (origin3); 	
	 	\draw(origin3)--++(0:2) coordinate (origin3); 
 	 	\draw[densely dotted](origin3)--++(0:.5) coordinate (origin3); 			
	\draw(origin3)--++(0:2) coordinate (origin3); 		
			 	\draw[densely dotted](origin3)--++(0:.5) coordinate (origin3); 				
	\draw(origin3)--++(0:1) coordinate (origin3); 		 						
		\foreach \i in {1,2,3,4,5,...,15}
		{
			\path (origin2)--++(0:0.5*\i) coordinate (a\i); 
			\path (origin2)--++(0:0.5*\i)--++(-90:0.00) coordinate (c\i); 
			  }
		
		\foreach \i in {1,2,3,4,5,...,19}
		{
			\path (origin2)--++(0:0.25*\i) --++(-90:0.5) coordinate (b\i); 
			\path (origin2)--++(0:0.25*\i) --++(-90:0.9) coordinate (d\i); 
		}
\foreach \i in {1,2,4,5,11,12,14,15}
{\path(a\i)--++(90:0.12) node  {  $  \down   $} ;
}

\foreach \i in {6,7,9,10}
{\path(a\i)--++(-90:0.12) node  {  $  \up  $} ;
}


		\draw[    thick](c6) to [out=-90,in=0] (b11) to [out=180,in=-90] (c5); 		
	
			\path(b11)--++(-90:0.2) coordinate (b11);

		\draw[    thick](c7) to [out=-90,in=0] (b11) to [out=180,in=-90] (c4);


		\path(b11)--++(-90:0.8) coordinate (b11);

		\draw[    thick](c10) to [out=-90,in=0] (b11) to [out=180,in=-90] (c1);

		\path(b11)--++(90:0.3) coordinate (b11);

		\draw[    thick](c9) to [out=-90,in=0] (b11) to [out=180,in=-90] (c2);

		\draw[ thick](c12)--++(-90:0.85) coordinate(X);
			\draw[ thick,densely   dotted](X)--++(-90:0.3);
		\draw[ thick](c11)--++(-90:0.85) coordinate(X);
			\draw[ thick,densely   dotted](X)--++(-90:0.3);	
				\draw[ thick](c14)--++(-90:0.85) coordinate(X);
							\draw[ thick,densely   dotted](X)--++(-90:0.3);
		\draw[ thick](c15)--++(-90:0.85) coordinate(X);
			\draw[ thick,densely   dotted](X)--++(-90:0.3);

\draw(3.8,1.5) node {$(m^m)$};

\draw(3.8,0.75) node {$ \underline{(m^m)}$};

	\path (13.5,1) coordinate (origin); 
 		\path (origin)--++(0.5,0.5) coordinate (origin2);  
	 	\path(origin2)--++(0:1.25) coordinate (origin3); 
	 	\path[densely dotted](origin3)--++(0:.5) coordinate (origin3); 	
	 	\path(origin3)--++(0:2) coordinate (origin3); 
 	 	\path[densely dotted](origin3)--++(0:.5) coordinate (origin3); 			
	\path(origin3)--++(0:2) coordinate (origin3); 		
			 	\path[densely dotted](origin3)--++(0:.5) coordinate (origin3); 				
	\path(origin3)--++(0:1) coordinate (origin3);

		\end{tikzpicture}
$$

 $$   \begin{tikzpicture} [scale=0.75]

		\path (4,1) coordinate (origin); 
 		\path (origin)--++(0.5,0.5) coordinate (origin2);  
	 	\draw(origin2)--++(0:1.25) coordinate (origin3); 
	 	\draw[densely dotted](origin3)--++(0:.5) coordinate (origin3); 	
	 	\draw(origin3)--++(0:2) coordinate (origin3); 
 	 	\draw[densely dotted](origin3)--++(0:.5) coordinate (origin3); 			
	\draw(origin3)--++(0:2) coordinate (origin3); 		
			 	\draw[densely dotted](origin3)--++(0:.5) coordinate (origin3); 				
	\draw(origin3)--++(0:1) coordinate (origin3); 		 						
		\foreach \i in {1,2,3,4,5,...,15}
		{
			\path (origin2)--++(0:0.5*\i) coordinate (a\i); 
			\path (origin2)--++(0:0.5*\i)--++(-90:0.00) coordinate (c\i); 
			  }
		
		\foreach \i in {1,2,3,4,5,...,19}
		{
			\path (origin2)--++(0:0.25*\i) --++(-90:0.5) coordinate (b\i); 
			\path (origin2)--++(0:0.25*\i) --++(-90:0.9) coordinate (d\i); 
		}
\foreach \i in {1,11,12,14,15}
{\path(a\i)--++(90:0.12) node  {  $  \down   $} ;
}

\foreach \i in {10}
{\path(a\i)--++(-90:0.12) node  {  $  \up  $} ;
}


		\draw[    thick](c6) to [out=-90,in=0] (b11) to [out=180,in=-90] (c5); 		
	
			\path(b11)--++(-90:0.2) coordinate (b11);

		\draw[    thick](c7) to [out=-90,in=0] (b11) to [out=180,in=-90] (c4);


		\path(b11)--++(-90:0.8) coordinate (b11);

		\draw[  red,  thick](c10) to [out=-90,in=0] (b11) to [out=180,in=-90] (c1);

		\path(b11)--++(90:0.3) coordinate (b11);

		\draw[    thick](c9) to [out=-90,in=0] (b11) to [out=180,in=-90] (c2);

		\draw[ thick](c12)--++(-90:0.85) coordinate(X);
			\draw[ thick,densely   dotted](X)--++(-90:0.3);
		\draw[ thick](c11)--++(-90:0.85) coordinate(X);
			\draw[ thick,densely   dotted](X)--++(-90:0.3);	
				\draw[ thick](c14)--++(-90:0.85) coordinate(X);
							\draw[ thick,densely   dotted](X)--++(-90:0.3);
		\draw[ thick](c15)--++(-90:0.85) coordinate(X);
			\draw[ thick,densely   dotted](X)--++(-90:0.3);

\draw(3.8,1.5) node {$\la$};

\draw(3.8,0.75) node {$ \underline{(m^m)}$};		 
%
%
		\path (13.5,1) coordinate (origin); 
 		\path (origin)--++(0.5,0.5) coordinate (origin2);  
	 	\draw(origin2)--++(0:1.25) coordinate (origin3); 
	 	\draw[densely dotted](origin3)--++(0:.5) coordinate (origin3); 	
	 	\draw(origin3)--++(0:2) coordinate (origin3); 
 	 	\draw[densely dotted](origin3)--++(0:.5) coordinate (origin3); 			
	\draw(origin3)--++(0:2) coordinate (origin3); 		
			 	\draw[densely dotted](origin3)--++(0:.5) coordinate (origin3); 				
	\draw(origin3)--++(0:1) coordinate (origin3); 		 						
		\foreach \i in {1,2,3,4,5,...,15}
		{
			\path (origin2)--++(0:0.5*\i) coordinate (a\i); 
			\path (origin2)--++(0:0.5*\i)--++(-90:0.00) coordinate (c\i); 
			  }
		
		\foreach \i in {1,2,3,4,5,...,25}
		{
			\path (origin2)--++(0:0.25*\i) --++(-90:0.5) coordinate (b\i); 
			\path (origin2)--++(0:0.25*\i) --++(-90:0.9) coordinate (d\i); 
		}
\foreach \i in {1,11,12,14,15}
{\path(a\i)--++(90:0.12) node  {  $  \down   $} ;
}

\foreach \i in {10}
{\path(a\i)--++(-90:0.12) node  {  $  \up  $} ;
}
 		
		\draw[    thick](c11) to [out=-90,in=0] (b21) to [out=180,in=-90] (c10);

%
%
%
%
%
%
%
%
%

		\draw[ thick](c12)--++(-90:0.85) coordinate(X);
			\draw[ thick,densely   dotted](X)--++(-90:0.3);
				\draw[ thick](c14)--++(-90:0.85) coordinate(X);
							\draw[ thick,densely   dotted](X)--++(-90:0.3);
		\draw[ thick](c15)--++(-90:0.85) coordinate(X);
			\draw[ thick,densely   dotted](X)--++(-90:0.3);

\path(origin2)--++(180:0.2) node {$\la$};

\path(origin2)--++(180:0.2)--++(-90:0.75)node {$ \underline{\la^\circ}$};		 
	\end{tikzpicture}$$

	 $$   \begin{tikzpicture} [scale=0.75]

		\path (4,1) coordinate (origin); 
 		\path (origin)--++(0.5,0.5) coordinate (origin2);  
	 	\draw(origin2)--++(0:1.25) coordinate (origin3); 
	 	\draw[densely dotted](origin3)--++(0:.5) coordinate (origin3); 	
	 	\draw(origin3)--++(0:2) coordinate (origin3); 
 	 	\draw[densely dotted](origin3)--++(0:.5) coordinate (origin3); 			
	\draw(origin3)--++(0:2) coordinate (origin3); 		
			 	\draw[densely dotted](origin3)--++(0:.5) coordinate (origin3); 				
	\draw(origin3)--++(0:1) coordinate (origin3); 		 						
		\foreach \i in {1,2,3,4,5,...,15}
		{
			\path (origin2)--++(0:0.5*\i) coordinate (a\i); 
			\path (origin2)--++(0:0.5*\i)--++(-90:0.00) coordinate (c\i); 
			  }
		
		\foreach \i in {1,2,3,4,5,...,19}
		{
			\path (origin2)--++(0:0.25*\i) --++(-90:0.5) coordinate (b\i); 
			\path (origin2)--++(0:0.25*\i) --++(-90:0.9) coordinate (d\i); 
		}
\foreach \i in {6,7,9,10,11,12,14,15}
{\path(a\i)--++(90:0.12) node  {  $  \down   $} ;
}

\foreach \i in {1,2,4,5}
{\path(a\i)--++(-90:0.12) node  {  $  \up  $} ;
}


		\draw[    thick](c6) to [out=-90,in=0] (b11) to [out=180,in=-90] (c5); 		
	
			\path(b11)--++(-90:0.2) coordinate (b11);

		\draw[    thick](c7) to [out=-90,in=0] (b11) to [out=180,in=-90] (c4);


		\path(b11)--++(-90:0.8) coordinate (b11);

		\draw[    thick](c10) to [out=-90,in=0] (b11) to [out=180,in=-90] (c1);

		\path(b11)--++(90:0.3) coordinate (b11);

		\draw[    thick](c9) to [out=-90,in=0] (b11) to [out=180,in=-90] (c2);

		\draw[ thick](c12)--++(-90:0.85) coordinate(X);
			\draw[ thick,densely   dotted](X)--++(-90:0.3);
		\draw[ thick](c11)--++(-90:0.85) coordinate(X);
			\draw[ thick,densely   dotted](X)--++(-90:0.3);	
				\draw[ thick](c14)--++(-90:0.85) coordinate(X);
							\draw[ thick,densely   dotted](X)--++(-90:0.3);
		\draw[ thick](c15)--++(-90:0.85) coordinate(X);
			\draw[ thick,densely   dotted](X)--++(-90:0.3);

\draw(3.8,1.5) node {$\varnothing$};

\draw(3.8,0.75) node {$  \underline{(m^m)}$};

	\path (13.5,1) coordinate (origin); 
 		\path (origin)--++(0.5,0.5) coordinate (origin2);  
	 	\path(origin2)--++(0:1.25) coordinate (origin3); 
	 	\path[densely dotted](origin3)--++(0:.5) coordinate (origin3); 	
	 	\path(origin3)--++(0:2) coordinate (origin3); 
 	 	\path[densely dotted](origin3)--++(0:.5) coordinate (origin3); 			
	\path(origin3)--++(0:2) coordinate (origin3); 		
			 	\path[densely dotted](origin3)--++(0:.5) coordinate (origin3); 				
	\path(origin3)--++(0:1) coordinate (origin3);

		\end{tikzpicture}
$$

 \caption{An illustration of the proof that $\underline{\la^\circ} = \underline{(m^m)}$ implies that $\la = \varnothing$ when $n>m$}\label{Figlem2}

  \end{figure}

\begin{prop}\label{derived} Assume $n>m$.
For all $Y\in (\Hmod)^S$ we have
$$R^jgY = 0 \quad \mbox{for all $0<j<n-m$}.$$
\end{prop}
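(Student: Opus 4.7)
The plan is to reduce the claim to a statement about cell modules $S(\la)$, then decompose $R^j g$ by primitive idempotents of $K^m_n$ and analyze case by case on $\mu$. First, induction on the length of a cell filtration of $Y$, using the long exact sequence of the right derived functors of the left exact functor $g$, reduces the problem to showing $R^j g S(\la) = 0$ for every $\la \in \Lambda_{m,n}$ and $0 < j < n-m$. Since $1 = \sum_\mu e_\mu \in K^m_n$ and $P(\mu) = K^m_n e_\mu$ is projective, the adjunction between $f$ and $g$ together with the exactness of $\Hom_{K^m_n}(P(\mu), -)$ gives the vector-space identification
\[ e_\mu R^j g S(\la) \cong {\rm Ext}^j_{H^m_n}(fP(\mu), S(\la)),\]
so the task reduces to showing this Ext group vanishes for all $\mu, \la \in \Lambda_{m,n}$ and $0 < j < n-m$.

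We then split into three cases on $\mu$. When $\mu \in \Lambda^\circ_{m,n}$, the module $fP(\mu)$ is projective-injective in $H^m_n$ by \cref{prinj}, so higher Ext vanishes. When $\mu \notin \Lambda^\circ_{m,n}$ and $\mu \neq \varnothing$, a direct check on the sequence of $\up$'s and $\down$'s shows $\mu$ must admit a $\down\up$ adjacent pair (the alternative forces $\mu = \up^m\down^n = \varnothing$), so $\mu = \mu^+ \in \Lambda^{\down\up}_{m,n}(i)$ for some $i$. Combining $fP_{m,n}(\mu) \cong \overline{G}^{t_i}fP_{m-1,n-1}(\mu')$ (from \eqref{Gsimplestandard1} and \cref{Gfiso}) with the Shapiro-type adjunction \cref{Extadjoint} transforms the Ext group into
\[{\rm Ext}^j_{H^{m-1}_{n-1}}(fP_{m-1,n-1}(\mu'), \overline{G}^{t_i^*}S_{m,n}(\la)),\]
and by \eqref{Gsimplestandard3} and \cref{Gfiso}, the target $\overline{G}^{t_i^*}S_{m,n}(\la)$ is either $S_{m-1,n-1}(\la')$ or zero. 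Both possibilities vanish in the same range $0 < j < (n-1)-(m-1) = n-m$ by induction on $m$, with trivial base case $m = 0$.

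The main obstacle is the remaining case $\mu = \varnothing$, since $\varnothing$ admits no $\down\up$ pair and the projective-functor reduction breaks. Here $P(\varnothing) = \Delta(\varnothing)$ as $\varnothing$ is maximal, and by \cref{uniserial} combined with the observation that among the composition factors $L(t^t)$ of $\Delta_{m,n}(\varnothing)$ only $L(m^m)$ lies in $\Lambda^\circ_{m,n}$ (using $n > m$), we deduce $fP(\varnothing) = D(m^m)$. We then show ${\rm Ext}^j_{H^m_n}(D(m^m), S(\la)) = 0$ for all $\la$ and $0 < j < n-m$ by upward induction on $\la$ starting from the minimum $(m^n)$. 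The base case follows from \eqref{(1)}, since $S((m^n)) = D(m^n)$. For the inductive step, applying $f$ to the short exact sequence from \cref{tilting} yields
\[0 \to S(\la) \to fT(\la) \to fJ(\la) \to 0,\]
with $fJ(\la)$ cell-filtered by $S(\mu)$, $\mu < \la$. The long exact sequence of ${\rm Ext}^*_{H^m_n}(D(m^m), -)$, combined with the vanishing ${\rm Ext}^j(D(m^m), fT(\la)) = 0$ from \eqref{(2)}, reduces ${\rm Ext}^j(D(m^m), S(\la))$ to ${\rm Ext}^{j-1}(D(m^m), fJ(\la))$. For $j \geq 2$ this vanishes by the inductive hypothesis applied to each cell factor of $fJ(\la)$; for $j = 1$ it vanishes because \cref{lem2} forces $\Hom(D(m^m), S(\mu)) = 0$ for every $\mu < \la$, noting that $\mu = \varnothing$ is impossible since $\varnothing$ is maximal.
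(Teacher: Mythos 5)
Your proof is correct and takes essentially the same approach as the paper: reduce to cell modules, identify $e_\mu R^j g S(\la)$ with ${\rm Ext}^j_{H^m_n}(fP(\mu), S(\la))$ (the paper phrases this via the Grothendieck spectral sequence, which is equivalent), reduce non-empty $\mu$ to $(m-1,n-1)$ via projective functors, and treat $\mu=\varnothing$ separately using \cref{lem1}, \cref{lem2} and the tilting sequence. Your splitting-off of the regular case $\mu \in \Lambda^\circ_{m,n}$ (where $fP(\mu)$ is projective and higher Ext vanishes immediately) is a harmless optimization not present in the paper, which instead routes every non-empty $\mu$ through the projective-functor reduction.
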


\begin{proof}
For any $X\in \Kmod$, $Y\in \Hmod$ we have a Grothendieck spectral sequence with second page 
$${\rm Ext}^i_{K^m_n}(X, R^jgY)\Rightarrow {\rm Ext}^{i+j}_{H^m_n}(fX, Y).$$
If $X=P\in \Kproj$ then this degenerates to give
$$\Hom_{K^m_n}(P, R^jgY)\cong {\rm Ext}^j_{H^m_n}(fP, Y).$$
Now we have $R^jgY=0$ if and only if 
$$0 = \Hom_{K^m_n}(P(\la), R^jgY)\cong {\rm Ext}^j_{H^m_n}(fP(\la), Y) \quad \mbox{for all $\la \in \Lambda_{m,n}$.}$$
So we need to show that for any $Y\in (\Hmod)^S$ and any $\la \in \Lambda_{m,n}$ we have
$${\rm Ext}^j_{H^m_n}(fP(\la), Y) = 0 \quad \mbox{for all $0<j<n-m$}.$$
Clearly, it is enough to show that for all $\la, \mu \in \Lambda_{m,n}$ we have
$${\rm Ext}^j_{H^m_n}(fP(\la), S(\mu)) = 0 \quad \mbox{for all $0<j<n-m$}.$$
We prove this by downward  induction on $\la \in \Lambda_{m,n}$. If $\la = \varnothing$ is maximal then $P(\varnothing) = \Delta(\varnothing)$ and, using \eqref{uniserial1} we have
$f\Delta(\varnothing) \cong  D(m^m)$. So we need to prove that
$${\rm Ext}^j_{H^m_n}(D(m^m), S(\mu)) = 0 \quad \mbox{for all $0<j<n-m$}.$$
We use induction on $\mu$. If $\mu = (m^n)$ is minimal then $S(m^n) = D(m^n)$ and we are done by \eqref{(1)}. 
Now let $\mu > (m^n)$ and assume that the result holds for all $\nu < \mu$. Using \cref{tilting} we  have a short exact sequence
$$0 \rightarrow \Delta(\mu) \rightarrow T(\mu) \rightarrow J(\mu) \rightarrow 0$$
where $(J(\mu):\Delta(\nu))\neq 0$ implies $\nu <\mu$. Applying the functor $\Hom_{H^m_n}(D(m^m), f(-))$ we obtain a long exact sequence
$$\ldots \rightarrow {\rm Ext}^{j-1}_{H^m_n}(D(m^m), fJ(\mu)) \rightarrow {\rm Ext}^j_{H^m_n}(D(m^m), S(\mu)) \rightarrow {\rm Ext}^j_{H^m_n}(D(m^m), fT(\mu)) \rightarrow \ldots$$
For $j>1$ we have that ${\rm Ext}^{j-1}_{H^m_n}(D(m^m), fJ(\mu)) =0$ by induction. For $j=1$ we have 
$${\rm Ext}^{j-1}_{H^m_n}(D(m^m), fJ(\mu)) = \Hom_{H^m_n}(D(m^m), fJ(\mu)) = 0$$
by \cref{lem2} as $(J(\mu):\Delta(\nu))\neq 0$ implies $\nu < \mu$ and so $\nu \neq \varnothing$. 
Finally, using \eqref{(2)} we have 
$$ {\rm Ext}^j_{H^m_n}(D(m^m), fT(\mu))  = 0.$$
Thus we must have 
${\rm Ext}^j_{H^m_n}(D(m^m), S(\mu)) = 0$ for all $0<i<n-m$ as required. This completes the case $\la = \varnothing$.

Now let $\varnothing > \la\in \Lambda_{m,n}$ and assume the result holds for any weight in $\Lambda_{m-1,n-1}$.  Then there exists $i$ such that $\la \in \Lambda_{m,n}^{\down \up}(i)$. Let $\la'\in \Lambda_{m-1,n-1}$  be the weight obtained from $\la$ by removing the symbols in positions $i$ and $i+1$. Then using \cref{Gsimplestandard1} we have 
$P_{m,n}(\la) = G^{t_i}(P_{m-1, n-1}(\la'))$. Using \cref{Gfiso} and \cref{Extadjoint} we have, for each $0<j<n-m$, that 
\begin{eqnarray*}
{\rm Ext}^j_{H^m_n}(fP_{m,n}(\la) ,S(\mu) ) &=& {\rm Ext}^j_{H^m_n}(\overline{G}^{t_i}fP_{m-1,n-1}(\la'), f\Delta(\mu))\\
&\cong & {\rm Ext}^j_{H^{m-1}_{n-1}}(fP_{m-1,n-1}(\la'), fG^{t_i^*}(\Delta(\mu)))\\
&=& 0.
\end{eqnarray*}
The last equality follows by induction as $fG^{t_i^*}\Delta(\mu)\in (H^{m-1}_{n-1}\!\!-\!\!{\rm mod})^S$ by \cref{Gsimplestandard3}.
\end{proof}

\begin{thm}\label{ifaithful}
Assume $n\neq m$. Then for any $X\in \Kmod$ and $M\in (\Kmod)^\Delta$ we have
$${\rm Ext}^j_{K^m_n}(X, M)\cong {\rm Ext}^j_{H^m_n}(fX, fM) \quad \mbox{for all $0\leq j<|n-m|$}.$$
In particular, $(K^m_n, f)$ is an $(|n-m|-1)$-faithful cover of $H^m_n$.
\end{thm}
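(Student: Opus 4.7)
The plan is to deduce the Ext isomorphism from the derived functor vanishing of \cref{derived} together with the 0-faithfulness established in \cref{0faithful}, via a standard Grothendieck spectral sequence degeneration argument. Without loss of generality assume $n>m$ (the case $n<m$ follows from \cref{rmkmn1} and \cref{rmkmn2}, which identify $K^m_n\cong K^n_m$ and $H^m_n\cong H^n_m$ in a way compatible with the Schur functors).

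Since $f$ is exact and left adjoint to $g$, the isomorphism $\Hom_{H^m_n}(fX,-)\cong \Hom_{K^m_n}(X,g(-))$ gives rise to a Grothendieck spectral sequence
$$E_2^{i,j} \;=\; {\rm Ext}^i_{K^m_n}(X, R^jg Y) \;\Rightarrow\; {\rm Ext}^{i+j}_{H^m_n}(fX, Y).$$
I would apply this with $Y=fM$ for $M\in (\Kmod)^\Delta$. Since $f$ is exact and sends standard modules to cell modules, $fM\in (\Hmod)^S$, so \cref{derived} yields $R^jg(fM)=0$ for all $0<j<n-m$.

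Next, I would verify that in the range $0\le k<n-m$ the spectral sequence degenerates onto the row $j=0$. Indeed, for $i+j=k$ with $j\geq 1$ we have $1\leq j\leq k<n-m$, so $E_2^{i,j}=0$. The only potentially surviving entry on this anti-diagonal is $E_2^{k,0}={\rm Ext}^k_{K^m_n}(X,gfM)$. Any incoming differential $d_r:E_r^{k-r,r-1}\to E_r^{k,0}$ ($r\geq 2$) starts from an entry with $1\leq r-1\leq k-1<n-m$, hence vanishes by \cref{derived}; outgoing differentials land in the second quadrant. Consequently $E_2^{k,0}=E_\infty^{k,0}$ is the unique contribution, yielding
$${\rm Ext}^k_{K^m_n}(X,gfM)\;\cong\;{\rm Ext}^k_{H^m_n}(fX,fM)\qquad\text{for }0\leq k<n-m.$$

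To finish, I would invoke 0-faithfulness: by \cref{0faithful} and \cref{R0faithful}, the unit $\eta(M):M\to gfM$ is an isomorphism for every $M\in(\Kmod)^\Delta$. Substituting $gfM\cong M$ on the left-hand side gives the desired Ext isomorphism, and taking $X=M'\in(\Kmod)^\Delta$ with $j\leq |n-m|-1$ yields the ``in particular'' claim. There is no real obstacle here once \cref{derived} is in hand; the only care needed is the routine spectral sequence bookkeeping verifying that both incoming and outgoing differentials at $E_2^{k,0}$ vanish in the indicated range.
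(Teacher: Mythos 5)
Your argument is correct and follows essentially the same route as the paper: invoke the Grothendieck spectral sequence for the composite $\Hom_{K^m_n}(X,-)\circ g$, use \cref{derived} to kill the rows $0<j<n-m$, and then substitute $gfM\cong M$ via \cref{0faithful}. The only difference is that you spell out the differential bookkeeping that the paper leaves implicit; the content is the same.
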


\begin{proof} Assume without loss of generality that $n>m$.
For $X\in \Kmod$ and $Y\in \Hmod$, we have a Grothendieck spectral sequence with second page 
$${\rm Ext}^j_{K^m_n}(X,R^igY) \Rightarrow {\rm Ext}^{i+j}_{H^m_n}(fX,Y).$$
Now when $Y\in (\Hmod)^S$, using \cref{derived}, we obtain
$${\rm Ex}t^j_{K^m_n}(X, gY) \cong {\rm Ext}^j_{H^m_n}(fX, Y) \quad \mbox{for all $0\leq j<n-m$}.$$
Now take $Y = fM$ for some $M\in (\Kmod)^\Delta$.  Using \cref{0faithful}, we have $gY=gfM\cong M$ which gives
$${\rm Ex}t^j_{K^m_n}(X, M) \cong {\rm Ext}^j_{H^m_n}(fX, fM) \quad \mbox{for all $0\leq j<n-m$}$$
as required.
\end{proof}

\begin{cor}
For $|n-m|\geq 2$ the functor $f$ induces an equivalence of exact categories from $(\Kmod)^\Delta$ to $(\Hmod)^S$ with inverse $g$.
\end{cor}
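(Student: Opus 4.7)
The plan is to verify in turn that (i) $f$ restricts to an exact functor $(\Kmod)^\Delta \to (\Hmod)^S$, (ii) $g$ restricts to an exact functor $(\Hmod)^S \to (\Kmod)^\Delta$, and (iii) the unit and counit give natural isomorphisms on these subcategories. The exact structures on $(\Kmod)^\Delta$ and $(\Hmod)^S$ are inherited from the ambient abelian categories via \cref{Deltakernel} and its analogue for cell filtrations.

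For (i), the functor $f$ is exact on all of $\Kmod$ and satisfies $f\Delta(\la)=S(\la)$ by definition, so it sends a $\Delta$-filtration to a cell filtration. For (iii), we recall that the counit $\varepsilon : fg \to \mathrm{Id}$ is an isomorphism on all of $\Hmod$ (from the ``cover'' condition recorded after \cref{prinj}), whereas the unit $\eta : \mathrm{Id} \to gf$ is an isomorphism on $(\Kmod)^\Delta$ by the $0$-faithfulness statement \cref{0faithful} combined with \cref{R0faithful}. These two natural isomorphisms will provide the quasi-inverse pair once (ii) is established.

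The only substantive point is (ii), where the hypothesis $|n-m|\geq 2$ is used. Given $Y\in (\Hmod)^S$ with cell filtration
$$0 = Y_0 \subset Y_1 \subset \cdots \subset Y_k = Y,\qquad Y_i/Y_{i-1}\cong S(\mu_i),$$
I would induct on $i$. Applying the left-exact functor $g$ to the short exact sequence $0\to Y_{i-1}\to Y_i\to S(\mu_i)\to 0$ yields a long exact sequence
$$0 \to gY_{i-1} \to gY_i \to gS(\mu_i) \to R^1 g\, Y_{i-1} \to \cdots.$$
By induction $Y_{i-1}\in (\Hmod)^S$, so \cref{derived} (using $1 < |n-m|$) gives $R^1 g\, Y_{i-1}=0$. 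Moreover, by \cref{0faithful} and the identity $f\Delta(\mu_i)=S(\mu_i)$, we have $gS(\mu_i)\cong gf\Delta(\mu_i)\cong \Delta(\mu_i)$. Thus the above long exact sequence collapses to a short exact sequence
$$0 \to gY_{i-1} \to gY_i \to \Delta(\mu_i) \to 0,$$
and by induction $gY_i\in (\Kmod)^\Delta$. This simultaneously proves that $gY\in (\Kmod)^\Delta$ and that $g$ sends short exact sequences in $(\Hmod)^S$ to short exact sequences in $(\Kmod)^\Delta$, i.e.\ $g$ is exact on $(\Hmod)^S$.

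Combining (i)--(iii) gives the equivalence of exact categories. The main conceptual step is (ii); the obstacle there is precisely the possible non-vanishing of $R^1 g$ on cell-filtered modules, which is exactly what \cref{derived} overcomes under the assumption $|n-m|\geq 2$.
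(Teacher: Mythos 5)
Your proof is correct and is essentially an unpacking of \cite[Proposition 4.41]{ROUQ}, which the paper cites as its entire proof of this corollary; the induction on the cell filtration, the vanishing of $R^1g$ from \cref{derived}, the identification $gS(\mu)\cong\Delta(\mu)$ via $0$-faithfulness, and the unit/counit isomorphisms are exactly the ingredients that Rouquier's abstract proposition packages. The only organizational difference is that you invoke \cref{derived} and \cref{0faithful} directly rather than routing through the statement of \cref{ifaithful}.
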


\begin{proof}
This follows from \cref{ifaithful} and \cite[Proposition 4.41]{ROUQ}.
\end{proof}

\begin{rmk}
All the results of this paper 
concern the passage of cohomological information between the (sub)categories $(\Kmod)^\Delta$ and $(\Hmod)^S$. 
In our companion paper \cite{BDDHMS2} we will consider the passage of information between
$\Kmod $ and $\Hmod$. 
\end{rmk}

\begin{rmk}
In   \cite[Corollary 8.6]{MR2781018} and \cite[B6. Corollary]{hm15} they prove that the
(basic algebra of the) 
 unique non-semisimple block of the cyclotomic  quiver Hecke (respectively  
Schur algebra)  of level 2, rank $mn$ and  quantum characteristic $e>m+n$
 is isomorphic to the    (extended) Khovanov arc algebra  $H^m_n$ ($K^m_n$ respectively).  
Combinatorially this matches up $\la\in \Lambda_{m,n}$ of this paper with a pair of partitions $\varphi_{m,n}(\la)=(\la,\la^c)$ such that  $\la+\la^c=(m^n)$. 
Therefore our faithfulness results immediately transfer to this setting. 
In this remark, we wish to emphasise that  the ``Scopes-esque" Morita equivalences constructed in 
 \cite{Scopes, WebAffRoCK} allow us to widen this faithfulness result  to a much broader class of blocks for quiver Hecke/Schur algebras.  In \cite{WebAffRoCK} the notion of a ``RoCK" block is defined for the  quiver Hecke/Schur algebras (and more general categorifications) and their study was initiated in  \cite{LyleARRoCK, LyleAR1, WebAffRoCK, MNSS, DA} motivated by the success of this theory in level 1 \cite{MR1874244, MR3862945}. 
 Amongst these RoCK blocks the core blocks of Fayers \cite{MR2335702} are the simplest: for level~1 they are semisimple and for level 2 we claim that they are all Morita equivalent to the (extended) Khovanov arc algebras.  
 
 To make this statement more precise, one may associate to a level two core block \(R^\Omega_\beta(\mathfrak{sl}_{e}, \Bbbk)\) a pair of integers \(0 \leq m_{\Omega, \beta} \leq n_{\Omega, \beta}< e\). If \(t_+\) and \(t_-\) are the number of \(+\)'s and \(-\)'s in the {\em sign sequence} for any bipartition in the block \(R^\Omega_\beta\) (see \cite{LyleARB}), then \(m_{\Omega,\beta} = \min\{t_+, t_-\}\) and \(n_{\Omega, \beta} = \max\{t_+, t_-\}\). The integer \(m_{\Omega, \beta}\) is the   {\sf weight}  of the core block (see \cite{MR2335702}), and we have \(m_{\Omega, \beta} + n_{\Omega, \beta}\leq e \) and \( n_{\Omega, \beta} - m_{\Omega, \beta} \in \{|i-j|, e-|i-j|\}\) when \(\Omega = \Omega_i + \Omega_j\). 
Now it is straightforward to show, thanks to combinatorial descriptions of core blocks in level two \cite{MR2335702, LyleARB} and applications of {\em Scopes} equivalences \cite{Scopes, WebAffRoCK} which preserve \(t_+, t_-\), that the core block \(R^{\Omega}_\beta(\mathfrak{sl}_{e})\) is Morita equivalent to a   block \(R^{\Omega}_{\gamma'}(\mathfrak{sl}_{e})\), which contains
the bipartition  \(\varphi_{m,n}(\varnothing)=(\varnothing, (m_{\Omega, \beta}^{n_{\Omega,\beta}}))\). 
Thus all core blocks of quiver Hecke (respectively Schur) algebras are Scopes-equivalent to the  
(extended) Khovanov arc algebra  $H^m_n$ ($K^m_n$ respectively).
\end{rmk}

               \bibliographystyle{amsalpha}   
\bibliography{master}

 \end{document}